\theoremstyle{plain}
\newtheorem{thm}{Theorem}[section]
\newtheorem{thm*}{Theorem}[section]
\newtheorem{prop}[thm]{Proposition}
\newtheorem{lemma}[thm]{Lemma}
\theoremstyle{definition}
\newtheorem{defn}[thm]{Definition}
\newtheorem{remark}[thm]{Remark}
\newtheorem{ex}[thm]{Example}
\numberwithin{equation}{thm}
\newcommand{\cN}{\mathcal N}
\def\JType{\operatorname{JType}\nolimits}
\def\Spec{\operatorname{Spec}\nolimits}
\def\Ker{\operatorname{Ker}\nolimits}
\def\Rad{\operatorname{Rad}\nolimits}
\def\rk{\operatorname{Rk}\nolimits}
\def\Proj{\operatorname{Proj}\nolimits}
\def\dim{\operatorname{dim}\nolimits}
\def\stmod{\operatorname{stmod}\nolimits}
\newcommand{\cG}{\mathcal G}
\newcommand{\bG}{\mathbb G}
\newcommand{\bZ}{\mathbb Z}
\newcommand{\bF}{\mathbb F}
\newcommand{\gl} {\mathfrak {gl}}
\newcommand{\ul}{\underline}
\def\rk{\operatorname{rk}\nolimits}
\def\pr{\operatorname{pr}\nolimits}
\def\Spec{\operatorname{Spec}\nolimits}
\def\sl2{\operatorname{SL_{2(2)}}\nolimits}
\def\Ga2{\operatorname{\mathbb G_{\rm a(2)}}\nolimits}
\def\SL{\operatorname{SL}\nolimits}
\def\GL{\operatorname{GL}\nolimits}
\def\triv{\operatorname{triv}\nolimits}
\def\bH{\operatorname{H^\bu}\nolimits}
\def\HHH{\operatorname{H}\nolimits}
\def\Ext{\operatorname{Ext}\nolimits}
\def\Ann{\operatorname{Ann}\nolimits}
\def\End{\operatorname{End}\nolimits}
\def\Hom{\operatorname{Hom}\nolimits}
\def\proj{\operatorname{proj}\nolimits}
\newcommand{\Gar}{\mathbb G_{a(r)}}
\newcommand{\bN}{\mathbb N}
\newcommand{\bu}{\bullet}
\date{April 26, 2010}
\begin{document}

 \title{Generalized support varieties for finite group schemes}

\author[Eric M. Friedlander and Julia Pevtsova] 
{Eric M. Friedlander$^*$ and 
Julia Pevtsova$^{**}$}
\dedicatory{to Andrei Suslin, with great admiration}

\address {Department of Mathematics, University of Southern California,
Los Angeles, CA }
\email{ericmf@usc.edu}

\address {Department of Mathematics, University of Washington, 
Seattle, WA}
\email{julia@math.washington.edu}

\thanks{$^*$ partially supported by the NSF  \# DMS 0909314}
\thanks{ $^{**}$ partially supported by the NSF  \# DMS 0800950}

\subjclass[2000]{16G10, 20C20, 20G10}

\keywords{}

\begin{abstract} 

We construct two families of refinements of the (projectivized) support variety of a finite dimensional 
module $M$ for a finite group scheme $G$.  For an arbitrary finite group scheme, we associate a family 
of {\it non maximal rank varieties} $\Gamma^j(G)_M$, $1\leq j \leq p-1$, to a $kG$-module $M$.  
For $G$ infinitesimal, we construct a finer family of locally closed subvarieties $V^{\ul a}(G)_M$ of 
the variety of one parameter subgroups of $G$ for any  partition $\ul a$  of $\dim M$.   For an 
arbitrary finite group scheme $G$, a $kG$-module $M$ of constant rank, and a cohomology class $\zeta$ in 
$\HHH^1(G,M)$ we introduce the {\it zero locus} $Z(\zeta) \subset \Pi(G)$. 
We show that $Z(\zeta)$ is a closed subvariety, and relate it to 
the non-maximal rank varieties.  
We also extend the construction of $Z(\zeta)$ to an arbitrary extension class $\zeta \in \Ext^n_G(M,N)$ 
whenever $M$ and $N$ are $kG$-modules  of constant Jordan type. 


\end{abstract}

\maketitle



\section{Introduction}

In the remarkable papers \cite{Q}, D. Quillen identified the spectrum of the (even dimensional)
cohomology of a finite group $\Spec \HHH^\bu(G,k)$ where $k$ is some field of characteristic $p$
dividing the order of the group.  The variety $\Spec \HHH^\bu(G,k)$ is the ``control space" for 
certain geometric invariants of finite dimensional $kG$-modules. These invariants,  
{\it cohomological support varieties} and {\it rank varieties}, were initially introduced and studied in \cite{AE} and \cite{C}.
Over the last twenty five  years, many authors have been investigating these varieties inside  $\Spec \HHH^\bu(G,k)$ in order to provide insights into the structure, behavior, and properties of $kG$-modules.  The initial theory for finite groups has
been extended to a much more general family of finite group schemes, starting with the work of \cite{FPa} for
$p$-restricted Lie algebras. The resulting theory of support varieties for modules for finite group schemes satisfies all
of the axioms of a ``support data" of tensor triangulated categories as defined in \cite{Bal}.
Thus, for example, this theory  provides
a classification of tensor--ideal, thick subcategories of the stable module category of a finite group scheme $G$.

In this present paper, we embark on a different perspective of geometric invariants for $kG$-modules for a finite group scheme $G$.
We introduce a new family of invariants, ``generalized support varieties", which stratify the support variety of a finite dimensional $kG$-module $M$.
As finer invariants, they capture more structure of a module $M$ and can 
distinguish between modules with the same support varieties.  In particular, 
the generalized support varieties are always proper subvarieties of the control space 
$\Spec \HHH^\bu(G,k)$ whereas the support variety often coincides with the entire control space.  
On the other hand, they necessarily lack certain good 
behavior with respect to tensor products and distinguished triangles in the stable module category of $kG$.  
However, as we shall try to convince the reader, these varieties provide interesting and useful 
tools in the further study of
the representation theory of finite groups and their generalizations.

Since the module category of a finite group scheme $G$ is wild except 
for very special $G$, our goals are necessarily
more modest than the classification of all (finite dimensional) $kG$-modules.  
Two general themes that we follow when introducing our new varieties associated to 
representations are the formulation of invariants
which distinguish various known classes of modules and 
the construction of modules with specified invariants.

In Section \ref{recollect},   
we summarize some of our earlier work, and that of others, concerning
support varieties of $kG$-modules.  We emphasize the formulation of support
varieties in terms of $\pi$-points, since the fundamental structure underlying our
new invariants is the scheme $\Pi(G)$ of equivalence classes of $\pi$-points.  Also
in this section, we recall maximal Jordan types of $kG$-modules and the
non-maximal subvariety $\Gamma(G)_M \subset M$ refining the support
variety $\Pi(G)_M$ for a finite dimensional $kG$-module $M$.

If $G$ is an infinitesimal group scheme, one formulation of support varieties is 
in terms of the affine scheme $V(G)$ of infinitesimal subgroups of $G$.  
For any Jordan type $\ul a§ = \sum_{i=1}^p a_i [i]$ and any finite dimensional 
$kG$-module $M$ (with $G$ infinitesimal), we associate in Section \ref{refined} subvarieties 
$V^{\leq \ul a}(G)_M$ and $V^{\ul a}(G)_M$ of $V(G)$.   Determination of these 
refined support varieties is enabled by earlier computations of the global $p$-nilpotent 
operator $\Theta_G: M \otimes k[V(G)] \to M \otimes k[V(G)]$ which was introduced and 
studied in \cite{FP3}.

We require a refinement of one of the main theorems of \cite{FPS} recalled as 
Theorem \ref{maximal}.  Section \ref{j-type} outlines the original proof due to A. Suslin
and the authors, and points out the minor modifications required to establish the fact
that whether or not a $kG$-module has maximal $j$-type at a $\pi$-point depends
only upon the equivalence class of that $\pi$-point (Theorem \ref{gen}). This is the key result 
needed to establish that the generalized support varieties are well--defined for all finite group schemes. 

In Section \ref{arbitrary}, we consider closed subvarieties $\Gamma^j(G)_M \subset \Pi(G)$
for any finite group scheme, finite dimensional $kG$-module $M$, and integer $j, 1 \leq j < p$, {\it the non maximal rank varieties}.
We establish some properties  of these varieties and work out a few examples to 
suggest how these invariants
can distinguish certain non-isomorphic $kG$-modules.

In the concluding Section \ref{ext-class}, we employ $\pi$-points to associate a closed subvariety
$Z(\zeta) \subset \Pi(G)$ to a cohomology class $\zeta \in \HHH^1(G,M)$ provided 
that $M$ is a $kG$-module of constant rank. One of the key properties of $Z(\zeta)$ is that  
$Z(\zeta) = \emptyset$ if and only if the extension $0 \to M \to E_\zeta \to k \to 0$
satisfies the condition that $E_\zeta$ is also a $kG$-module of constant rank. We show that
 $Z(\zeta)$ is often homeomorphic to $\Gamma^1(G)_{E_\zeta}$ which allows us to conclude  
that $Z(\zeta)$ is closed. Taking $M$ to be an odd degree Heller shift of the trivial module $k$, we recover the
familiar zero locus of a class in $\HHH^{2n}(G,k)$ in the special case $M = k$.   
 Finally, we generalize
this construction to extension classes $\xi \in \Ext^n_G(M,N)$ for $kG$-modules $M$ and $N$ 
of constant Jordan type and any $n \geq 0$.

	We abuse terminology in this paper by referring to a (Zariski) closed subset of an
affine or projective variety as a subvariety.  Should one wish, one could always impose
the reduced scheme structure on such ``subvarieties".

We would  like to thank Jon Carlson for pointing out to us that maximal ranks do not 
behave well under tensor product, Rolf Farnsteiner for his insights into components of the  Auslander-Reiten quiver, 
and the referee for several useful comments. 
The second author gratefully acknowledges the support of MSRI during her postdoctoral appointment there.


\section{Recollection of $\Pi$-point schemes and support varieties}
\label{recollect}

Throughout, $k$ will denote an arbitrary field of characteristic $p > 0$.  Unless
explicit mention is made to the contrary, $G$ will denote a finite group scheme over $k$
with finite dimensional coordinate algebra $k[G]$.   We denote by $kG$ the 
Hopf algebra dual to $k[G]$, and refer to $kG$ as the group algebra of $G$.
Thus, (left) $kG$-modules are naturally equivalent to (left) $k[G]$-comodules,  which are equivalent to (left) 
rational $G$-modules (see \cite[ch.1]{Jan}).  If $M$ is a $kG$-module
and $K/k$ is a field extension, then we denote by $M_K$ the $KG$-module 
obtained by base change.

We shall identify 
$\HHH^*(G,k)$ with $ \HHH^*(kG,k)$.

\begin{defn} (\cite{FP2}) 
The {\it $\Pi$-point scheme} of a finite group scheme $G$ is the $k$-scheme of finite type
whose points are equivalence classes of $\pi$-points of $G$ and whose scheme structure
is  defined in terms of the category of $kG$-modules.

In more detail,
\begin{enumerate}
\item
A $\pi$-point of $G$ is a (left)
flat map of $K$-algebras $\alpha_K: K[t]/t^p \to KG$ for some field extension 
$K/k$ with the property that there exists a unipotent abelian subgroup scheme
$i: C_K \subset G_K$ defined over $K$ such that $\alpha_K$ factors through 
$i_*: KC_K \to KG_K = KG$.
\item If $\alpha_K: K[t]/t^p \to KG$, $\beta_L: L[t]/t^p \to LG$ are two $\pi$-points of $G$, then $\alpha_K$ 
is said to be a {\it specialization} of $\beta_L$ ,  provided that 
for any finite dimensional $kG$-module $M$, $\alpha_K^*(M_K)$ being free  as $K[t]/t^p$-module
implies that $\beta^*_L(M_L)$ is free as $L[t]/t^p$-module.
\item
Two $\pi$-points $\alpha_K: K[t]/t^p \to KG, ~ \beta_L: L[t]/t^p \to LG$ 
are said to be {\it equivalent},  written $\alpha_K \sim \beta_L$, if they satisfy 
the following condition for all finite dimensional $kG$-modules $M$:
$\alpha_K^*(M_K)$ is free  as $K[t]/t^p$-module if and only if 
$\beta^*_L(M_L)$ is free as $L[t]/t^p$-module.
\item
A subset $V \subset \Pi(G)$ is closed if and only if there exists a finite dimensional 
$kG$-module $M$ such that $V$ equals
$$\Pi(G)_M = \{[\alpha_K] \, |  \, \alpha^*_K(M_K) \text{ is not free as } 
K[t]/t^p-\text{module} \}$$

\noindent
The closed subset $\Pi(G)_M  \subset \Pi(G)$ is called the {\it $\Pi$-support of $M$}.

\item The topological space $\Pi(G)$ of equivalence classes of $\pi$-points can be endowed with a scheme structure based on representation theoretic properties of $G$ (see \cite[\S7]{FP2}).
\end{enumerate}
\end{defn}

\noindent
We denote by 
$$
\bH(G,k) = 
\begin{cases}
\HHH^*(G,k), & \text{if $p = 2$,}\\
\HHH^{\rm ev}(G,k) & \text{if $p > 2$}. \\
\end{cases}
$$ 
The {\it  cohomological support variety} $|G|_M$ of a  $kG$-module $M$ 
is the closed subspace of $\Spec \HHH^\bu(G,k)$ defined as the variety of 
the ideal $\Ann_{\HHH^\bu(G,k)}\Ext_{G}^*(M,M) \subset \HHH^\bu(G,k)$. 

\begin{thm} \cite[7.5]{FP2}
\label{cohom}
Let $G$ be a finite group scheme, and $M$ be a finite dimensional $kG$-module.  
Denote by $\Proj \HHH^\bu(G,k)$ the projective $k$-scheme associated to the commutative,
graded $k$-algebra $ \HHH^\bu(G,k)$.  Then there is an isomorphism of $k$-schemes
$$\Phi_G: \Proj \HHH^\bu(G,k) \simeq \Pi(G)$$ 
which restricts to a homeomorphism  of closed subspaces
$$\Proj (|G|_M) \simeq \Pi(G)_M$$
for all finite dimensional $kG$-modules $M$.
\end{thm}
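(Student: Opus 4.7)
The plan is to construct $\Phi_G$ explicitly on points, verify that equivalence of $\pi$-points corresponds to equality of homogeneous primes, and then check that both the scheme structure and the supports match. Concretely, to a $\pi$-point $\alpha_K \colon K[t]/t^p \to KG$ I would associate the homogeneous prime
$$\p(\alpha_K) \;=\; \ker\bigl(\HHH^\bu(G,k) \to \HHH^\bu(KG,K) \xrightarrow{\alpha_K^*} \HHH^\bu(K[t]/t^p,K) \twoheadrightarrow K[x]\bigr),$$
where the last surjection retains only the polynomial generator in degree two (discarding the exterior factor when $p>2$). Setting $\Phi_G([\alpha_K]) := \p(\alpha_K)$ gives a candidate map $\Pi(G) \to \Proj\HHH^\bu(G,k)$.

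The first step is to check well-definedness on equivalence classes and the identification of supports. The key observation is that for a homogeneous class $\zeta \in \HHH^{2n}(G,k)$, one has $\zeta \notin \p(\alpha_K)$ if and only if $\alpha_K^*(L_{\zeta,K})$ is \emph{not} free over $K[t]/t^p$, where $L_\zeta$ is the Carlson module representing $\zeta$. Since equivalence of $\pi$-points is defined through the common non-free locus on all finite dimensional modules, two equivalent $\pi$-points yield the same ideal, so $\Phi_G$ is well-defined. The same observation shows that $\alpha_K^*(M_K)$ fails to be free precisely when some class acting non-trivially on $M$ survives restriction along $\alpha_K$, i.e.\ when $\p(\alpha_K)$ does not contain $\Ann_{\HHH^\bu(G,k)}\Ext^*_G(M,M)$. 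This simultaneously establishes the homeomorphism $\Pi(G)_M \simeq \Proj(|G|_M)$ once $\Phi_G$ is shown to be a homeomorphism.

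To prove that $\Phi_G$ is a bijection on points, surjectivity is handled by a Quillen-type stratification: any homogeneous prime of $\HHH^\bu(G,k)$ is detected, after a suitable base change, on an abelian unipotent subgroup scheme $C_K \subset G_K$, and for such $C_K$ the Suslin--Friedlander--Bendel identification of $\Spec \HHH^\bu(C,k)_{\rm red}$ with the variety of one-parameter subgroups (respectively the elementary abelian model for ordinary finite groups) produces an explicit $\pi$-point mapping to the given prime. Injectivity follows by producing, for any two inequivalent $\pi$-points, a Carlson module separating their non-free loci, which forces their associated primes to differ.

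The main obstacle is upgrading this continuous bijection to an isomorphism of $k$-schemes, since the scheme structure on $\Pi(G)$ from \cite[\S 7]{FP2} is defined representation-theoretically by prescribing distinguished open subsets and their coordinate rings via graded endomorphism algebras of idempotent objects in the stable module category. To match these with the standard structure sheaf on $\Proj\HHH^\bu(G,k)$, I would work one homogeneous element $\zeta$ at a time: identify the principal open $D(\zeta) \subset \Proj\HHH^\bu(G,k)$ with the locus of $\pi$-points on which $L_\zeta$ becomes free, and recognise the localised cohomology ring $\HHH^\bu(G,k)[\zeta^{-1}]_0$ as the endomorphism ring of the associated Rickard idempotent restricted to that open. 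Functoriality then glues these local isomorphisms into the global $\Phi_G$, and the homeomorphism $\Proj(|G|_M) \simeq \Pi(G)_M$ drops out by restricting to the closed subschemes defined by $\Ann_{\HHH^\bu(G,k)}\Ext^*_G(M,M)$ and by $\Pi$-support respectively.
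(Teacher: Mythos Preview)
The paper does not prove this theorem at all: it is stated in Section~\ref{recollect} purely as a recollection, with the proof deferred entirely to the citation \cite[7.5]{FP2}. There is therefore no ``paper's own proof'' to compare against; the authors simply import the result as background.

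That said, your sketch is broadly in the spirit of the argument in \cite{FP2}, but two points deserve care. First, you have the Carlson module equivalence reversed: $\zeta \notin \p(\alpha_K)$ means $\alpha_K^*(\zeta) \neq 0$, which is exactly the condition for $\alpha_K^*(L_{\zeta,K})$ to be \emph{free}, not non-free (cf.\ the discussion around Proposition~\ref{comparison} in this paper, or \cite[2.9]{FP2}). This does not break the well-definedness argument, since equivalence still controls the free/non-free dichotomy, but the stated biconditional is wrong as written. Second, and more substantively, the passage from continuous bijection to scheme isomorphism is where the real work in \cite{FP2} lies, and your outline of matching $\HHH^\bu(G,k)[\zeta^{-1}]_0$ with endomorphism rings of Rickard idempotents is a plausible strategy but glosses over considerable technical detail; in \cite{FP2} this is handled by a careful analysis of the functor of points and the representation-theoretic definition of the structure sheaf, not quite by the local-ring matching you describe.
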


We (implicitly) identify $\Proj \HHH^\bu(G,k)$ with $\Pi(G)$ via this isomorphism.

We consider the {\it stable module category} $\stmod kG$.  Recall that the Heller shift $\Omega(M)$ 
of $M$ is the kernel of the minimal projective cover $P(M) \twoheadrightarrow M$, and the inverse 
Heller shift $\Omega^{-1}(M)$  is the cokernel of the embedding of $M$ into its injective hull, 
$M \hookrightarrow I(M)$.

The objects of $\stmod kG$  are finite dimensional $kG$-modules.  The morphisms are equivalence 
classes where two morphisms are equivalent if they differ by a morphism which factors through a 
projective module, 
$$\Hom_{\stmod kG}(M,N) = \Hom_{kG}(M,N) / {\rm PHom}_{kG} (M,N).$$
The stable module category has a tensor triangulated structure: the triangles are induced by exact 
sequences, the shift operator is given by the inverse Heller operator $\Omega^{-1}$, and the tensor 
product is the standard tensor product in the category of $kG$-modules.  Two $kG$-modules 
$M$, $N$ are stably isomorphic if and only if they are isomorphic as $kG$-modules up to a 
projective direct summand.

The association $M \mapsto \Pi(G)_M$
fits the abstractly defined ``theory of supports" for the stable module category of $G$ 
(as defined in \cite{Bal}).
Some of the basic properties of this theory are summarized in the next theorem (see \cite{FP2}).

\begin{thm}
\label{properties}
Let $G$ be a finite group scheme and let $M, N$ be finite dimensional $kG$-modules.
\begin{enumerate}
\item $\Pi(G)_M = \emptyset$ if and only if $M$ is projective as a $kG$-module.
\item  $\Pi(G)_{M\oplus N} = \Pi(G)_M \cup \Pi(G)_N$.
\item $\Pi(G)_{M\otimes N} = \Pi(G)_M \cap \Pi(G)_N$.
\item $\Pi(G)_M = \Pi(G)_{\Omega M}$.
\item If $M \to N \to Q \to \Omega^{-1}M$ is an exact triangle in the stable module category 
$\stmod(kG)$  
then $\Pi(G)_N \subset \Pi(G)_M \cup \Pi(G)_Q$.
\item  If $p$ does not divide the dimension of $M$, then $\Pi(G)_M = \Pi(G)$.
\end{enumerate}
\end{thm}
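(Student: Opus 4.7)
The plan is to reduce each of the six statements to a corresponding statement about modules over the local self-injective algebra $K[t]/t^p$ for each choice of $\pi$-point $\alpha_K\colon K[t]/t^p \to KG$, using that (i) $\alpha_K$ is flat, so $\alpha_K^*$ is exact, commutes with direct sums, and sends projectives to frees; and (ii) projectivity, injectivity, and freeness coincide for $K[t]/t^p$-modules. From this, parts (2), (4), (5), (6) are essentially formal. For (2), a direct sum of $K[t]/t^p$-modules is free iff each summand is free, yielding both inclusions simultaneously. For (4), flatness of $\alpha_K$ gives $\alpha_K^*((\Omega M)_K) \cong \Omega(\alpha_K^*(M_K))$ in $\stmod(K[t]/t^p)$ up to a projective summand, and $\Omega$ is an autoequivalence of the stable category that preserves the zero object. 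For (5), $\alpha_K^*$ of the given triangle is an exact triangle in $\stmod(K[t]/t^p)$, and in any triangulated category the vanishing of two of three vertices forces the third to vanish. For (6), $\dim_K \alpha_K^*(M_K) = \dim_k M$ is coprime to $p$, whereas every free $K[t]/t^p$-module has $p$-divisible dimension, so every $\pi$-point lies in $\Pi(G)_M$.

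For (1), one direction is easy: if $M$ is projective, then $\alpha_K^*(M_K)$ is flat and hence free over $K[t]/t^p$, giving $\Pi(G)_M = \emptyset$. The converse, that $\pi$-points detect projectivity, is the central detection theorem of \cite{FP2}, which extends Dade's lemma from elementary abelian $p$-groups through infinitesimal group schemes to arbitrary finite group schemes; I would invoke it directly here rather than attempt to redevelop its proof.

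The main obstacle is (3). My approach is to fix a $\pi$-point $\alpha_K$ factoring through a unipotent abelian subgroup scheme $C_K \subset G_K$, so that $\alpha_K^*((M \otimes N)_K)$ arises by restriction along $K[t]/t^p \to KC_K$ from the diagonal $KC_K$-action on $M|_{KC_K} \otimes_K N|_{KC_K}$. The problem then reduces to the following Jordan-type lemma for the Hopf algebra $K[t]/t^p$: for two $K[t]/t^p$-modules $U$ and $V$, the diagonal module $U \otimes_K V$ is free iff $U$ or $V$ is free. The ``if'' direction is the standard Hopf-algebra fact that $[p] \otimes W$ is a free $K[t]/t^p$-module of rank $\dim_K W$ for every $W$, proved by a twist isomorphism between the diagonal action and the left-regular action on $[p]$. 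The ``only if'' direction follows by choosing indecomposable Jordan summands $[a]$ of $U$ and $[b]$ of $V$ with $a,b < p$, which exist because $U$ and $V$ are not free; since $p \nmid ab$, the direct summand $[a] \otimes [b]$ of $U \otimes V$ cannot be a sum of $[p]$ blocks, so $U \otimes V$ itself is not free. A final compatibility check that the outcome depends only on the equivalence class $[\alpha_K]$, and not on the chosen factorization through $C_K$, completes the argument; this last step is carried out in \cite{FP2}.
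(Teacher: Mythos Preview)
The paper does not give its own proof of this theorem; it is stated as a summary of known properties with a reference to \cite{FP2}. Your arguments for (1), (2), (4), (5), (6) are correct and are essentially how these statements are established in \cite{FP1}, \cite{FP2}.

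There is a genuine gap in your treatment of (3). You correctly note that $\alpha_K$ factors through $KC_K$ for some unipotent abelian $C_K\subset G_K$, and that $(M\otimes N)_K|_{C_K}=M_K|_{C_K}\otimes_K N_K|_{C_K}$ with the diagonal $KC_K$-action, since $KC_K\hookrightarrow KG$ is a map of Hopf algebras. But the further restriction along $\alpha_K:K[t]/t^p\to KC_K$ does \emph{not} yield the diagonal $K[t]/t^p$-module $\alpha_K^*(M_K)\otimes\alpha_K^*(N_K)$, because $\alpha_K$ itself is almost never a Hopf algebra map. For instance, take $G=E=(\bZ/p)^n$, $kE=k[x_1,\dots,x_n]/(x_i^p)$ with $x_i=g_i-1$, and $\alpha(t)=x_1$; then $\Delta(x_1)=x_1\otimes 1+1\otimes x_1+x_1\otimes x_1$, so $t$ acts on $\alpha^*(M\otimes N)$ by $x_1\otimes 1+1\otimes x_1+x_1\otimes x_1$, not by the primitive diagonal. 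Your Jordan-type lemma for $K[t]/t^p$ therefore does not apply as stated, and the ``compatibility check'' you defer to \cite{FP2} addresses a different issue (independence of the equivalence class of $\alpha_K$), not this one.

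What is actually needed is to show that the higher-order terms in $\Delta(\alpha_K(t))\in I\otimes I$ (where $I$ is the augmentation ideal of $KC_K$) do not affect whether the resulting $K[t]/t^p$-module is free. This is a nontrivial perturbation statement for commuting nilpotent operators, in the spirit of Theorem~\ref{max} of the present paper, and it is precisely the content of the relevant lemmas in \cite{FP1}, \cite{FP2}. Alternatively, one can bypass the direct $\pi$-point argument entirely and deduce (3) from Theorem~\ref{cohom} together with the classical tensor product formula for cohomological support varieties.
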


The last property of Theorem \ref{properties} indicates that $M \mapsto \Pi(G)_M$ is
a somewhat crude invariant.  

We next recall the use of Jordan types in order to refine this theory.
The isomorphism type of a finite dimensional $k[t]/t^p$-module $M$ is  said
to be the Jordan type of $M$.  We denote the  Jordan type of $M$ by $\JType(M)$,
and write $\JType(M) =  \sum_{i=1}^p a_i[i]$; in other words, as a $k[t]/t^p$-module 
$M \ \simeq  \bigoplus_{i=1}^p ([i])^{\oplus a_i}$ where $[i] = k[t]/t^i$.   Thus, we may (and
will) view a Jordan type $\JType(M)$ as a partition of $m= \dim M$ into 
subsets each of which has cardinality $\leq p$.

We shall compare Jordan types using the {\it dominance order}. Let 
$\underline n = [n_1 \leq n_2 \leq \ldots \leq n_k]$, $\underline m = [m_1 \leq m_2 \leq \ldots \leq m_k]$ 
be two partitions  of $N$. Then $\underline n$ {\it dominates} $\underline  m$, written $\underline n \geq \underline  m$,
iff

\begin{equation}
\label{ineq}
\sum_{i = j}^k n_i \  \geq  \ \sum_{i = j}^k m_i .
\end{equation} 
for all $j, \, 1 \leq j \leq k$.  
For $k[t]/t^p$-modules $M, N$, we say that $\JType(M) \geq \JType(N)$ if the partition corresponding to $\JType(M)$
dominates the partition  corresponding to $\JType(N)$.  The dominance order on Jordan types can be reformulated in the following way.

\begin{lemma}
\label{reform}  Let $M$, $N$ be $k[t]/t^p$-modules of dimension $m$. 
Then $\JType(M) \geq \JType(N)$ if and only if
$$\rk (t^j, M) \geq \rk (t^j, N)$$
for all $j, \, 1 \leq j < p$, where $\rk (t^j, M)$ denotes the rank of the operator $t^j$ on $M$. 
\end{lemma}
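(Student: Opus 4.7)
The plan is to translate both sides into partition-theoretic inequalities and then invoke the classical self-duality of the dominance order under conjugation of partitions.

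First, a direct computation on a single Jordan block $[i] = k[t]/t^i$ shows that $\rk(t^j, [i]) = \max(i-j, 0)$. Writing $\JType(M) = \sum_{i=1}^p a_i[i]$ and summing block-by-block then yields
\[
\rk(t^j, M) \;=\; \sum_{i > j}(i-j)\,a_i.
\]
Let $\lambda = \lambda^M$ denote the partition associated to $\JType(M)$ (so $\lambda$ has $a_i$ parts equal to $i$), and let $\lambda'$ be its conjugate, whose $k$-th part $\lambda'_k = \sum_{i \geq k} a_i$ counts the number of Jordan blocks of size at least $k$. A short telescoping rewrites the formula above as
\[
\rk(t^j, M) \;=\; \sum_{k > j}\lambda'_k \;=\; m - \sum_{k=1}^{j}\lambda'_k.
\]

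Since $(\lambda^M)'_k = 0 = (\lambda^N)'_k$ for $k > p$ and $\sum_k (\lambda^M)'_k = m = \sum_k (\lambda^N)'_k$, the system of inequalities $\rk(t^j, M) \geq \rk(t^j, N)$ for $1 \leq j < p$ is equivalent to $\sum_{k=1}^j (\lambda^M)'_k \leq \sum_{k=1}^j (\lambda^N)'_k$ for $1 \leq j \leq p$ (the case $j = p$ holding automatically with equality), which is exactly the statement that $(\lambda^N)'$ dominates $(\lambda^M)'$ in the standard dominance order. I then invoke the classical self-duality of dominance under conjugation of partitions, namely $\lambda \geq \mu$ if and only if $\mu' \geq \lambda'$, applied with $\lambda = \lambda^M$ and $\mu = \lambda^N$, to obtain the desired equivalence.

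The argument is essentially combinatorial bookkeeping and there is no real obstacle. The one subtlety is matching the paper's convention (parts listed in increasing order $n_1 \leq \cdots \leq n_k$, with partial sums $\sum_{i=j}^k n_i$ taken from the top) to the standard convention, in which parts are listed in decreasing order and the partial sums of the largest parts are compared. The reindexing $\lambda_l := n_{k+1-l}$ effects this translation, identifying the paper's dominance order with the classical one to which conjugate-partition duality applies.
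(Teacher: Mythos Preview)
Your proof is correct and follows essentially the same route as the paper: both derive the formula $\rk(t^j,M)=\sum_{i>j}(i-j)a_i$ and then reduce to a standard partition-theoretic fact. The paper simply cites \cite[6.2.2]{CM} for that fact, whereas you unpack it explicitly via the conjugate partition and the self-duality of the dominance order; this is a difference in level of detail rather than in strategy.
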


\begin{proof} If $\JType(M) = \sum\limits_{i=1}^p a_i[i]$, then
\begin{equation}\label{rank} 
\rk (t^j, M) = \sum_{i = j+1}^p a_i (i-j).
\end{equation}
The statement now follows from \cite[6.2.2]{CM}.
\end{proof}

The following theorem plays a key role in our formulation of geometric invariants for a
$kG$-module $M$ that are finer than the $\Pi$-support $\Pi(G)_M$.   In Section \ref{j-type}, we 
outline the proof of this theorem in order to prove the related, but sharper, Theorem \ref{gen}. 
We say that a $\pi$-point $\alpha_K$ has maximal Jordan type on a $kG$-module $M$ 
if there does not exist a $\pi$-point $\beta_L$ such that $\JType(\alpha_K^*(M_K)) < \JType(\beta_L^*(M_L))$.   

\begin{thm}\cite[4.10]{FPS}
\label{maximal}
Let $G$ be a finite group scheme over $k$ and $M$ a finite 
dimensional $kG$-module.  Let $\alpha_K: K[t]/t^p \to KG$ be a $\pi$-point of $G$ which has  
maximal Jordan type on $M$. Then for any $\pi$-point 
$\beta_L: L[t]/t^p \to LG$ which specializes 
to $\alpha_K$,
the Jordan type of $\alpha_K^*(M_K)$ equals the Jordan type of  
$\beta_L^*(M_L)$; in particular, if $\alpha_K \sim \beta_L$, then
the Jordan type of $\alpha_K^*(M_K)$ equals the Jordan type of $\beta_L^*(M_L)$.
\end{thm}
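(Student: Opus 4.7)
The plan is to convert the desired equality of Jordan types into freeness statements about auxiliary $kG$-modules, which is the form in which the specialization hypothesis can be applied.

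First, by Lemma \ref{reform} it is enough to establish, for each $1 \leq j \leq p-1$, that
\[
\rk(t^j, \alpha_K^*(M_K)) \;=\; \rk(t^j, \beta_L^*(M_L)).
\]
One inequality is immediate from the hypothesis: since the Jordan type of $\alpha_K^*(M_K)$ is maximal, Lemma \ref{reform} gives $\rk(t^j, \alpha_K^*(M_K)) \geq \rk(t^j, \beta_L^*(M_L))$ for each $j$. The real content is the opposite inequality, and this is where specialization must be used.

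For each $j$ the strategy is to construct an auxiliary $kG$-module $N_j = N_j(M)$ with the following detection property: for every $\pi$-point $\gamma_F: F[t]/t^p \to FG$, the module $\gamma_F^*((N_j)_F)$ is free as $F[t]/t^p$-module if and only if $\rk(t^j, \gamma_F^*(M_F))$ attains its maximum value over all $\pi$-points of $G$. One can aim to build such a module from tensor products, duals, and Heller shifts of $M$, exploiting the multiplicative behavior of Jordan types of $k[t]/t^p$-modules: the number of free summands of $\gamma_F^*(M_F^{\otimes n})$ is a polynomial expression in the ranks $\rk(t^i, \gamma_F^*(M_F))$, and a suitable linear combination of such tensor-power modules (together with Heller shifts to kill unwanted contributions) can be arranged to isolate the $j$th rank.

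Granting the existence of $N_j$, the argument is then short. Maximality at $\alpha_K$ forces $\alpha_K^*((N_j)_K)$ to be free, so the specialization hypothesis for the pair $(\alpha_K, \beta_L)$ applied to $N_j$ forces $\beta_L^*((N_j)_L)$ to be free as well. By the defining property of $N_j$, this means $\beta_L$ also achieves the maximum value of $\rk(t^j, \cdot^*(M))$, giving the reverse inequality $\rk(t^j, \beta_L^*(M_L)) \geq \rk(t^j, \alpha_K^*(M_K))$. Running this for every $j$ and applying Lemma \ref{reform} one more time yields $\JType(\alpha_K^*(M_K)) = \JType(\beta_L^*(M_L))$. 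The final clause is then immediate: if $\alpha_K \sim \beta_L$, then $\beta_L$ specializes to $\alpha_K$ in the sense of the definition, so the main assertion applies.

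The principal obstacle is the construction of the detector module $N_j$: one must convert a numerical rank equality into a freeness statement in a single $kG$-module, uniformly in the $\pi$-point $\gamma_F$. I expect this to require a careful combinatorial bookkeeping on Jordan types of tensor powers (similar in spirit to the Carlson $L_\zeta$ technology that converts cohomological conditions into freeness conditions), together with the observation that the maximum of $\rk(t^j,\gamma_F^*M_F)$ is achieved and is a fixed integer depending only on $M$ and $j$.
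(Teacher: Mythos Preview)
Your proposal has a genuine gap at its core. The entire argument is delegated to the existence of the detector modules $N_j$, which you do not construct; you only sketch a hoped-for route. Worse, that route cannot work as described. You want to build $N_j$ from tensor powers of $M$ and then read off $\gamma_F^*(M_F^{\otimes n})$ as $(\gamma_F^*(M_F))^{\otimes n}$, so that its number of free summands becomes a polynomial in the ranks $\rk(t^i,\gamma_F^*(M_F))$. But a $\pi$-point $\gamma_F: F[t]/t^p \to FG$ is only required to be a flat \emph{algebra} map factoring through a unipotent abelian subgroup scheme; it is not in general a Hopf algebra map, so $\gamma_F^*$ has no reason to respect tensor products. (Example~\ref{undefined} already shows that equivalent $\pi$-points can produce different Jordan types on a fixed module; the same non-Hopf behavior obstructs any uniform tensor formula.) Heller shifts do not help either, since the Heller operator on $kG$-modules need not be compatible with $\gamma_F^*$ beyond exactness. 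So the combinatorial bookkeeping you promise never gets started for general $\pi$-points, and without $N_j$ nothing remains of the argument.

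The paper's proof (following \cite{FPS} and outlined in Section~\ref{j-type}) is of an entirely different nature. Instead of encoding rank conditions as freeness of auxiliary modules, it works directly with ranks via a linear-algebra lemma (Theorem~\ref{max}): if $\alpha,\alpha_1,\dots,\alpha_n,\beta_1,\dots,\beta_n$ are commuting nilpotent endomorphisms with $\rk(\alpha^j)\ge\rk(\alpha+\sum\lambda_i\alpha_i)^j$ for all scalar tuples $(\lambda_i)$, then $\rk(\alpha^j)=\rk(\alpha+\sum\alpha_i\beta_i)^j$. This lets one compare the $j$-ranks of two $\pi$-points whose images differ by ``higher order'' terms in commuting nilpotents. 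The global statement is then obtained by a chain of structural reductions --- first elementary abelian $p$-groups (Proposition~\ref{elem-ab}), then abelian finite group schemes (Proposition~\ref{ab}), then infinitesimal group schemes (Proposition~\ref{infin-max}), and finally arbitrary $G$ --- each step invoking the previous one together with the linear-algebra input. The specialization relation enters, but not through freeness of detector modules; it is used to place the two $\pi$-points into a common setting where the commuting-nilpotent lemma applies.
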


The following class of $kG$-modules was introduced in \cite{CFP} and further studied in \cite{CF},
\cite{CFS}, \cite{B1}, \cite{B2}.  

\begin{defn}
\label{constant}
A finite dimensional $kG$-module $M$ is said to be of {\it constant Jordan type} if
the Jordan type of $\alpha_K^*(M_K)$ is the same for every $\pi$-point $\alpha_K$ of
$G$.   By Theorem \ref{maximal}, $M$ has constant Jordan type $\ul a$ if and only if
for each point of $\Pi(G)$ there is some representative $\alpha_K$ of that point with
$\JType(M) = \ul a$.
\end{defn}
Theorem  \ref{maximal} justifies the following definition (see \cite[5.1]{FPS}).

\begin{defn} (\cite[5.1]{FPS})
\label{nmax}  Let $M$ be a finite dimensional representation of a finite group scheme $G$. 
We define $\Gamma(G)_M\subset \Pi(G)$  to be the subset of equivalence classes of 
$\pi$-points $\alpha_K: K[t]/t^p \to KG$  such that $\JType(\alpha_K^*(M_K))$ is not maximal 
among Jordan types $\JType(\beta^*_L(M_L))$ where $\beta_L$ runs over all $\pi$-points of $G$. 
\end{defn}

To conclude this summary, we recall certain properties of the association
$M \ \mapsto \ \Gamma(G)_M$.

\begin{prop}
\label{gamma}
Let $G$ be a finite group scheme and let $M, N$ be finite dimensional $kG$-modules.
Then $\Gamma(G)_M \subset \Pi(G)$ is a closed subvariety satisfying the following properties:
\begin{enumerate}
\item If $M$ and $N$ are stably isomorphic, then $\Gamma(G)_M = \Gamma(G)_N$.
\item $\Gamma(G)_M \subset \Pi(G)_M$ with equality if and only if $\Pi(G)_M \not= \Pi(G)$.
\item $\Gamma(G)_M$ is empty if and only if $M$ has constant Jordan type.
\item If $M$ has constant Jordan type, then $\Gamma(G)_{M\oplus N} = \Gamma(G)_N$. 
\item If $\Pi(G)$ is irreducible, then $N$ has constant non-projective Jordan type if and 
only if $\Gamma(G)_{M\otimes N} = \Gamma(G)_M$  for any $kG$-module $M$. 
\item If $\Pi(G)$ is irreducible, then 
$$\Gamma(G)_{M\otimes N} \ = \ (\Gamma(G)_M \cup \Gamma(G)_N) \cap
 (\Pi(G)_M \cap \Pi(G)_N)
.$$
\end{enumerate}
\end{prop}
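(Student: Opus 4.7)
My plan is to reduce every claim to a statement about the rank functions $\rk_j([\alpha_K]) := \rk(t^j, \alpha_K^*(M_K))$ for $1 \leq j \leq p-1$ via Lemma \ref{reform}. Each $\rk_j$ is well-defined on $\Pi(G)$ by Theorem \ref{gen}, and I will use that it is lower semicontinuous to conclude that $\Gamma(G)_M = \bigcup_{j=1}^{p-1} \{[\alpha_K] : \rk_j([\alpha_K]) < \max \rk_j\}$ is a finite union of closed subsets, establishing closedness as a subvariety.

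For (1), stable isomorphism gives projective modules $P, Q$ with $M \oplus P \cong N \oplus Q$; since $\alpha_K^*$ sends projectives to free $K[t]/t^p$-modules, the Jordan types of $\alpha_K^*(M_K)$ and $\alpha_K^*(N_K)$ differ by a fixed multiple of $[p]$ independent of $\alpha_K$, and translation by a common partition preserves strict dominance, so the non-maximal loci coincide. For (2), the free Jordan type is the unique maximum under dominance among partitions with parts of size $\leq p$, so any $\pi$-point outside $\Pi(G)_M$ has free, hence maximal, pullback and lies outside $\Gamma(G)_M$; equality in (2) then holds iff no $\pi$-point has free pullback, i.e., iff $\Pi(G)_M \subsetneq \Pi(G)$. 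For (3), $\Gamma(G)_M = \emptyset$ forces each $\rk_j$ to attain its maximum everywhere, and lower semicontinuity then makes each $\rk_j$ constant, which by Lemma \ref{reform} gives constant Jordan type; the converse is immediate. For (4), $\JType(\alpha_K^*(M \oplus N)_K) = \ul a + \JType(\alpha_K^*(N_K))$ with $\ul a$ the constant Jordan type of $M$, and translation by $\ul a$ is a monotone bijection of Jordan types, transferring the non-maximal locus.

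Parts (5) and (6) rest on the observation that $\alpha_K^*((M \otimes N)_K) \cong \alpha_K^*(M_K) \otimes_K \alpha_K^*(N_K)$ as $K[t]/t^p$-modules with $t$ acting via the comultiplication $t \mapsto t \otimes 1 + 1 \otimes t$, so $\JType(\alpha_K^*((M \otimes N)_K))$ is a function $\ul a \boxtimes \ul b$ of the pair of input Jordan types alone. Under irreducibility of $\Pi(G)$ the generic Jordan types $\ul a_M, \ul a_N$ of $M, N$ exist, and by density both are simultaneously attained on a dense open subset, so the generic Jordan type of $M \otimes N$ is $\ul a_M \boxtimes \ul a_N$. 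For (6), a $\pi$-point outside $\Pi(G)_M \cap \Pi(G)_N$ has free pullback of $M \otimes N$ by Theorem \ref{properties}(3), hence maximal Jordan type; within this intersection, the central assertion is that $\alpha_K^*(M_K) \boxtimes \alpha_K^*(N_K) = \ul a_M \boxtimes \ul a_N$ iff both individual pullbacks achieve their generic types, which yields the claimed formula. For (5), constant non-projective Jordan type of $N$ means $\alpha_K^*(N_K)$ has a fixed non-free part $\ul b$ at every $\pi$-point with the $[p]$-summands contributing only free terms after $\boxtimes$, so (6) specializes to $\Gamma(G)_{M \otimes N} = \Gamma(G)_M$; the converse direction follows by taking $M = k$ and invoking (3).

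The main obstacle will be the strict monotonicity of $\boxtimes$: if $\ul a < \ul a'$ and $\ul b$ contains a non-$[p]$ block, then $\ul a \boxtimes \ul b < \ul a' \boxtimes \ul b$. This is the Jordan-type manifestation of Carlson's warning that maximal ranks behave badly under tensor product, and its verification requires a careful analysis of how Jordan blocks decompose under the comultiplication-induced tensor over $K[t]/t^p$, most cleanly via the rank reformulation of Lemma \ref{reform}.
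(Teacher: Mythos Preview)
Your overall strategy for (1)--(4) is close to the paper's (the paper derives (1) from (4), cites \cite[5.2]{FPS} for closedness, and says (2)--(3) follow from definitions and (4) from additivity of dominance), but there is a genuine error in your foundation. You assert that each $\rk_j([\alpha_K]) := \rk(t^j,\alpha_K^*(M_K))$ is a well-defined function on $\Pi(G)$ by Theorem~\ref{gen}. It is not: Theorem~\ref{gen} only shows that the \emph{condition} ``$\alpha_K$ has maximal $j$-rank'' is invariant under equivalence of $\pi$-points, not that the numerical value of the $j$-rank is. Example~\ref{undefined} gives equivalent $\pi$-points $\alpha \sim \alpha'$ with different Jordan types, hence different ranks. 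Consequently your appeal to lower semicontinuity of $\rk_j$ \emph{as a function on $\Pi(G)$} is not available, and your argument for (3) (``lower semicontinuity then makes each $\rk_j$ constant'') does not go through as stated. What survives is that each set $\Gamma^j(G)_M = \{[\alpha_K]:\text{$j$-rank not maximal}\}$ is well defined; to see it is closed one must, as in the proof of Proposition~\ref{closed}(1) (following \cite[5.2]{FPS}), reduce to the infinitesimal case and work on the affine scheme $V(G)$, where ranks \emph{are} honest lower-semicontinuous functions. Your sketch for (2) also has the logic reversed: equality $\Gamma(G)_M = \Pi(G)_M$ holds precisely when \emph{some} $\pi$-point has free pullback (so the free type is the unique maximum), i.e.\ when $\Pi(G)_M \neq \Pi(G)$, not when ``no $\pi$-point has free pullback''.

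For (5) and (6) you have correctly isolated the crux: strict monotonicity of the tensor operation $\boxtimes$ on Jordan types under dominance (with a non-free factor). The paper does not prove this here but simply cites \cite[4.7]{CFP} and \cite[4.9]{CFP}; your plan to attack it via Lemma~\ref{reform} is reasonable, but be aware (cf.\ Example~\ref{tensor}) that individual ranks are \emph{not} monotone under $\boxtimes$, so the argument must track all ranks simultaneously rather than one $j$ at a time. Your converse for (5) via $M=k$ gives constant Jordan type but not non-projectivity; you still need to rule out $N$ projective by choosing some $M$ with $\Gamma(G)_M \neq \emptyset$.
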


\begin{proof} 
If $M$ and $N$ are stably isomorphic then $M = N \oplus P$ (or vice versa) with $P$ projective. 
Since  projective modules have constant Jordan type, (1) becomes a  special case of (4).  
The fact that $\Gamma(G)_M \subset \Pi(G)$ is closed is proved in \cite[5.2]{FPS}.  
Properties (2) and (3) follow essentially from definitions.   Property (4) follows from the 
additivity of the dominance order.
Properties (5)  and (6) are the statements of \cite[4.9]{CFP} and \cite[4.7]{CFP} respectively.  
\end{proof}


\section{Refined support varieties for infinitesimal group schemes}
\label{refined}

Before considering refinements of $\Gamma(G)_M \subset \Pi(G)$
in Section \ref{j-type} for a general finite group scheme $G$, we specialize 
in this section to infinitesimal group
schemes and work with the affine variety $V(G)$.  First, we recall some definitions and 
several fundamental results from \cite{SFB1}, \cite{SFB2}.

A finite group scheme is called {\it infinitesimal} if its coordinate algebra $k[G]$ is local. 
Important examples of infinitesimal group schemes are Frobenius kernels of algebraic groups 
(see \cite{Jan}).  An infinitesimal group scheme is said to have height less or equal to $r$ if for 
any $x$ in $\Rad(k[G])$, $x^{p^r} =0$.  
 A {\it one-parameter subgroup} of height $r$  of $G$ over a commutative $k$-algebra 
 $A$ is a map of group schemes over $A$ of the form $\mu: \bG_{a(r),A} \to G_A$. 
 Here, $\bG_{a(r),A}, \ G_A$ are group schemes over $A$ defined as the base changes
 from $k$ to $A$ of $\Gar, \ G$.

Let $\bG_a$ be the additive group, and $\Gar$ be the $r$-th Frobenius kernel of $\bG_a$.  
Then $k[\Gar] = k[T]/T^{p^r}$, and $k\bG_{a(r)} = k[u_0,\ldots,u_{r-1}]/(u_0^p,\ldots,u_{r-1}^p)$, 
indexed so that the 
Frobenius map $F: \Gar \to \Gar$ satisfies $F_*(u_i) = u_{i-1}, i >0;
F_*(u_0) = 0$.  We define
\begin{equation}
\label{epsilon}
\epsilon: k[u]/u^p \ \to \ k\bG_{a(r)} = k[u_0,\ldots,u_{r-1}]/(u_0^p,\ldots,u_{r-1}^p)
\end{equation}
to be the map sending $u$ to $u_{r-1} \in k\bG_{a(r)}$.  Thus,
$\epsilon$ is a map of group algebras but not of Hopf algebras in general.  
In fact,  the map $\epsilon$ is induced by a group scheme homomorphism 
if and only if $r=1$ in which case $\epsilon$ is an isomorphism.

\begin{thm} \cite{SFB1}
\label{1-param}
Let $G$ be an infinitesimal group scheme of height $\leq r$.  
Then there is an affine group scheme
$V(G)$ which represents the functor sending a commutative $k$-algebra $A$ to the 
set $\Hom_{{\rm gr. sch}/A}(\bG_{a(r),A}, G_A)$.  
\end{thm}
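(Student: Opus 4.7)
The plan is to establish representability by reducing to the special case $G = GL_n$, where the functor admits an explicit description in terms of commuting nilpotent matrices. First I would reformulate: a morphism of group schemes $\mu: \bG_{a(r),A} \to G_A$ over $A$ is equivalent to a morphism of $A$-Hopf algebras $\mu^*: A \otimes_k k[G] \to A[T]/T^{p^r}$, so the functor in question is $A \mapsto \Hom_{\text{Hopf}/A}(A\otimes k[G],\, A[T]/T^{p^r})$, and representability for this functor is what must be verified.

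Next I would carry out the key calculation for $G = GL_n$. Using the truncated exponential $\exp(X) = \sum_{j=0}^{p-1} X^j/j!$, which is well-defined on any $p$-nilpotent $X$ since $j!$ is invertible mod $p$ for $j < p$, one shows that every one-parameter subgroup $\mu: \bG_{a(r),A} \to GL_{n,A}$ has the unique form
\begin{equation*}
\mu(t) \;=\; \prod_{i=0}^{r-1} \exp\bigl(X_i\, t^{p^i}\bigr),
\end{equation*}
where $X_0, \dots, X_{r-1} \in M_n(A)$ are pairwise commuting $p$-nilpotent matrices, the pairwise commutativity ensuring the product is unambiguous. Conversely, any such $r$-tuple defines a one-parameter subgroup via this formula. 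This identifies $V(GL_n)$ with the closed subscheme of $\gl_n^r$ carved out by the equations $X_i^p = 0$ and $[X_i, X_j] = 0$, which is visibly an affine scheme of finite type.

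For a general infinitesimal $G$ of height $\leq r$, I would fix a closed embedding $\iota: G \hookrightarrow GL_n$, which exists because $k[G]$ is finite-dimensional and so $G$ embeds into $GL(V)$ for any faithful finite-dimensional representation $V$. Postcomposition with $\iota$ produces a natural transformation $V(G) \to V(GL_n)$, and the subfunctor consisting of those $r$-tuples $(X_0, \dots, X_{r-1}) \in V(GL_n)(A)$ whose associated one-parameter subgroup of $GL_{n,A}$ factors through $G_A$ is cut out by the vanishing of $\mu^*$ on the defining ideal of $G$ inside $k[GL_n]$ — a closed condition on the matrix entries of the $X_i$. Hence $V(G)$ is represented by a closed subscheme of $V(GL_n)$, which is affine.

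The hard part will be the $GL_n$ parametrization: proving both existence and uniqueness of the decomposition $\mu(t) = \prod \exp(X_i t^{p^i})$. The delicate point is to analyze Hopf algebra maps $k[GL_n] \to A[T]/T^{p^r}$ according to the Frobenius-twist structure on the target — extracting a matrix $X_i \in M_n(A)$ as the coefficient at $T^{p^i}$, and then deducing from co-multiplicativity of $\mu^*$ (i.e.\ $\mu(s+t) = \mu(s)\mu(t)$) that the $X_i$ must be $p$-nilpotent (forced by the relation $T^{p^r} = 0$) and pairwise commuting (forced by the requirement that the product of truncated exponentials actually be a group homomorphism). Once this bijection is established, the extension to arbitrary infinitesimal $G$ via a faithful representation is formal.
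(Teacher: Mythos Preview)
The paper does not supply a proof of this statement: Theorem~\ref{1-param} is quoted from \cite{SFB1} as background, with no argument given. So there is no ``paper's own proof'' to compare against.

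That said, your proposal is an accurate outline of the proof in the cited reference \cite{SFB1}. The strategy there is exactly what you describe: one first establishes the explicit parametrization
\[
V_r(\GL_n)(A) \;\cong\; \{(X_0,\dots,X_{r-1}) \in M_n(A)^r : X_i^p = 0,\ [X_i,X_j] = 0\}
\]
via the truncated exponential formula $\mu(t) = \prod_i \exp(X_i t^{p^i})$, and then for general $G$ one embeds $G \hookrightarrow \GL_n$ and realizes $V(G)$ as the closed subscheme of $V_r(\GL_n)$ cut out by the factorization condition. Your identification of the ``hard part'' --- extracting the $X_i$ from a Hopf algebra map and deducing $p$-nilpotence and pairwise commutation from the group-homomorphism condition --- is also where the work lies in \cite{SFB1}. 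One small clarification: the embedding of an infinitesimal (or more generally finite) group scheme into some $\GL_n$ is guaranteed by taking the regular representation on $k[G]$, not merely ``any faithful representation,'' though for your purposes any closed embedding suffices.
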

\noindent
Thus, a point $v \in V(G)$ naturally 
corresponds to a 1-parameter subgroup $$\xymatrix{\mu_v: \bG_{a(r),k(v)} \ar[r]& G_{k(v)}}$$
where $k(v)$ is the residue field of $v$.

\begin{thm} \cite{SFB2}
(1). The closed subspaces of $V(G)$ are the subsets of the form
$$V(G)_M = \{ v \in V(G)\, |  \, \epsilon^*\mu_v^*(M_{k(s)}) \text{ is not free as a module over } 
k(v)[u]/u^p \}$$for some finite dimensional $kG$-module $M$.\\[1pt]
(2). There is a natural
$p$-isogeny 
$V(G) \longrightarrow \Spec \HHH^\bu(G,k)$
which restricts to a homeomorphism
$V(G)_M \simeq |G|_M$
for any finite dimensional $kG$-module $M$. 
\end{thm}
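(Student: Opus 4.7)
The plan is to establish part (2) first and then deduce part (1) via a Carlson-style module construction. For (2), I would construct a natural homomorphism of graded $k$-algebras
$$\psi: \HHH^\bu(G,k) \longrightarrow k[V(G)]$$
and then show that the associated morphism of affine schemes $V(G) \to \Spec \HHH^\bu(G,k)$ is a $p$-isogeny.

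To construct $\psi$, I would use restriction along one-parameter subgroups: given $\mu: \bG_{a(r),A} \to G_A$ over a commutative $k$-algebra $A$, restriction in cohomology produces
$$\mu^*: \HHH^\bu(G,k)\otimes A \;\longrightarrow\; \HHH^\bu(\bG_{a(r),A}, A).$$
The key input is the explicit computation of $\HHH^\bu(\bG_{a(r)}, k)$, generated by classes $x_1,\dots,x_r$ of even degree together with nilpotent classes $\lambda_1,\dots,\lambda_r$; modulo the ideal of nilpotents one obtains a polynomial ring identified with $k[V(\bG_{a(r)})] = k[\mathbb A^r]$ up to a Frobenius twist on the $x_i$. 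Naturality in $A$ together with the functor-of-points description of $V(G)$ from Theorem \ref{1-param} assembles these restrictions into the desired map $\psi$.

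The main obstacle is proving that $V(G) \to \Spec \HHH^\bu(G,k)$ is a $p$-isogeny, that is, a finite morphism that becomes an isomorphism after inverting a power of the Frobenius. I would approach this by embedding $G$ into a suitable $\GL_n$ and exploiting the explicit cohomology of the Frobenius kernels of $\GL_n$ computed via strict polynomial functors, combined with the Friedlander--Suslin finite-generation theorem for $\HHH^\bu(G,k)$. Bijectivity on $\ol k$-points up to Frobenius twist comes from constructing, for each closed point of the reduced cohomology scheme, a one-parameter subgroup realizing it; the height-$1$ case reduces to the Friedlander--Parshall identification of $V(G)$ with the restricted nullcone of $\Lie(G)$.

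To compare supports, I would employ the global $p$-nilpotent operator $\Theta_G: M \otimes k[V(G)] \to M \otimes k[V(G)]$ of \cite{FP3}, whose rank at $v \in V(G)$ equals the rank of $\epsilon^*\mu_v^*(M_{k(v)})$ viewed as a $k(v)[u]/u^p$-module; thus $V(G)_M$ is the vanishing locus of the maximal minors of $\Theta_G$, and in particular closed. Matching $V(G)_M$ with $|G|_M$ under $\psi$ reduces to identifying both sides with the locus of $\pi$-points on which $M$ is not free, using that closed points of $V(G)$ yield $\pi$-points through $\epsilon \circ \mu_v$. Finally, for (1), any closed $Y \subset V(G)$ descends under the $p$-isogeny to a closed subset of $\Spec \HHH^\bu(G,k)$, which by Carlson's construction of modules $L_\zeta$ (and finite intersections thereof) is of the form $|G|_M$ for some finite dimensional $M$; applying the homeomorphism of (2) yields $V(G)_M = Y$.
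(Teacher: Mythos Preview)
This theorem carries no proof in the present paper: it is stated with the citation \cite{SFB2} and is quoted as background from Suslin--Friedlander--Bendel. There is therefore no ``paper's own proof'' to compare your proposal against; the authors are recalling an established result, not re-deriving it.

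That said, your sketch is a reasonable outline of how the result is actually established in \cite{SFB1}, \cite{SFB2}: the map $\psi$ is built from functorial restriction to $\HHH^\bu(\Gar,k)$, the $p$-isogeny is proved by reduction to $\GL_n$ Frobenius kernels via the strict-polynomial-functor computation underlying the finite-generation theorem, and part (1) is obtained from part (2) plus Carlson modules. One caution: invoking the global operator $\Theta_G$ of \cite{FP3} to show $V(G)_M$ is closed and to match it with $|G|_M$ is anachronistic and potentially circular, since \cite{FP3} postdates and relies upon \cite{SFB2}. In the original argument, closedness of $V(G)_M$ and the identification with $|G|_M$ are handled directly from the construction of $\psi$ and a detection argument for freeness along one-parameter subgroups, without appeal to $\Theta_G$.
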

 Theorem~\ref{cohom} implies that the spaces $\Pi(G)$ and $\Proj k[V(G)]$ are also homeomorphic 
(see \cite{FP2} for a natural direct relationship between $\Pi(G)$ and $V(G)$ for an infinitesimal group 
scheme).

Let $\mu_{v*}:k(v)\Gar \to k(v)G$ be the map on group algebras  induced  by the one-parameter 
subgroup $\mu_v: \Gar \to G$. We denote by $\theta_v$ the nilpotent element of $k(v)G$ which 
is the image $u$ under the composition
$$ \xymatrix{k(v)[u]/u^p \ar[r]^-\epsilon & k(v)[u_0,\ldots,u_{r-1}]/(u_0^p,\ldots,u_{r-1}^p) \ar[r]^-{\mu_{v*}} 
& k(v)G}.$$  
So,  $\theta_v = \mu_{v*}(u_{r-1}) \in k(v)G$. 
For a given $kG$-module $M$  we also let 
$$\theta_v: M_{k(v)} \to M_{k(v)}
$$
denote the associated $p$-nilpotent endomorphism.  Thus, 
$\JType(\epsilon^*\mu_{v}^*(M_{k(v)}))$ is the Jordan type of $\theta_v$ on
$M_{k(v)}$.

\begin{defn}
Let $M$ be a $kG$-module of dimension $m$.  We define the {\it local Jordan type
function}
\begin{equation}
\label{JType}
\JType_M: V(G) \ \to \ \bN^{\times p}, 
\end{equation}
by sending $v$ to  
$(a_1,\ldots,a_p)$, where
 $(\theta_v)^*(M_{k(v)}) \simeq \sum_{i=1}^p a_i[i]$.
\end{defn}

\begin{defn}
For a given $\ul a = (a_1,\ldots,a_p) \in \bN^{\times p}$, we define 
$$V^{\ul a}(G)_M = \{ v \in V(G) \, | \, \JType_M(v) = \ul a \}, $$
$$V^{\leq \ul a}(G)_M = \{ v \in V(G) \, | \, \JType_M(v) \leq \ul a \}. $$
\end{defn}

\noindent As we see in the following example, $V^{\ul a}(G)_M $ is a generalization 
of a nilpotent orbit of the adjoint representation (and
$V^{\leq \ul a}(G)_M $ is a generalization of an orbit closure).

\begin{ex}
\label{elemex}
Let $G = \GL_{N(1)}$ and let $M$ be the standard $N$-dimensional
 representation of $\GL_N$.  Then $\JType_M$ sends a $p$-nilpotent
matrix $X$ to its Jordan Jordan type as an endomorphism of $M$.  Consequently, 
$\JType_M$ has image inside $\bN^{\times p}$ consisting of those $p$-tuples
$\ul a = (a_1,\ldots,a_p)$ such that $\sum_i a_i \cdot i = N$.   The locally closed
subvarieties $V^{\ul a}(G)_M
\subset \cN_p(\gl_N)$ are precisely the adjoint $\GL_N$-orbits inside the $p$-nilpotent
cone $\cN_p(\gl_N)$  of the Lie algebra $\gl_N$.
\end{ex}

\begin{ex}
Let $\zeta \in \HHH^{2i+1}(G,k)$ be a non-zero cohomology class of odd degree.
Let $L_\zeta$ be the Carlson module
defined as the kernel of the map $\Omega^{2i+1}(k) \to k$ corresponding to 
$\zeta$ (see \cite[II.5.9]{Ben}).  The module $\Omega^{2i+1}(k)$ has constant Jordan type $m[p] + [p-1]$.
Let $\ul a = m[p] + [p-2]$ and $\ul b = (m-1)[p] + 2[p-1]$.   Then the image of 
$\JType_{L_\zeta}$ equals $\{ \ul a, \ul b \} \subset \bN^{\times p}$.  Moreover, $V^{\ul a}(G)_{L_\zeta}$
is open in $V(G)$, with complement $V^{\ul b}(G)_{L_\zeta}$.
\end{ex}

\begin{remark}
\label{re:theta}
An explicit determination of the global $p$-nilpotent operator $\Theta_M: M \otimes k[V(G)]
\to M\otimes k[V(G)]$ of \cite[2.4]{FP3} immediately determines 
the local Jordan type function $\JType_M$. Namely, to any $v \in V(G)$ we  associate a 
nilpotent linear operator $\theta_v: M_{k(v)} \to M_{k(v)}$ defined by $\theta_v = \Theta_M \otimes_{k(v)[V(G)]}{k(v)}$. 
The local Jordan type of $M$ at the point $v$ is precisely the Jordan type of the linear operator $\theta_v$. 

The reader should consult \cite{FP3} for many explicit examples of $kG$-modules $M$
for each of the four families of examples of infinitesimal group schemes: (i.) $G$ of height 1,
so that $M$ is a $p$-restricted module for $Lie(G)$; (ii.) $G = \Gar$; (iii.) $\GL_{n(r)}$;
and (iv.) $\SL_{2(2)}$.
\end{remark}

We provide a few elementary properties of these refined support varieties.

\begin{prop}
Let $M$ be a $kG$-module of dimension $m$ and let
 $\ul a = (a_1, \ldots, a_p)$ such that $\sum_{i=1}^p a_i\cdot i = m$.
\begin{enumerate}
\item  If $m = p\cdot m^\prime$, then
$V(G) \backslash\ V(G)_M \ = \ V^{(0,\ldots,0,m^\prime)}(G)_M$;
otherwise,  $V(G) = V(G)_M$.
\item $M$ has constant Jordan type if and only if $V(G)_M = V^{\ul a}(G)_M$
for some $\ul a \in \bN^{\times p}$ (in which case $\ul a$ is the Jordan type of $M$).
\item 
$V^{\leq \ul a}(G)_M \ = \ \{ v\in V(G) \ | \  \JType_M(v) \leq \ul a\}$
 is a closed subvariety of $V(G)$.
\item
$V^{\ul a}(G)_M$ is a locally
closed subvariety of $V(G)$, open in $V^{\leq \ul a}(G)_M$. 
 \item
 $V^{\leq \ul b}(G)_M \ \subseteq \ V^{\leq \ul a}(G)_M , \quad {\text if \ } 
 \ul b \leq \ul a$, where $``\leq"$ is the dominance order on Jordan types. 
 \end{enumerate}
\end{prop}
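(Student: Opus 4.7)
My plan is to derive all five parts from two basic inputs: the realization of the global $p$-nilpotent operator $\Theta_M$ of \cite{FP3} as an algebraic family whose specialization at $v$ recovers $\theta_v$ (Remark~\ref{re:theta}), and the rank reformulation of dominance supplied by Lemma~\ref{reform}. Part (1) is a direct unpacking of definitions: $v \notin V(G)_M$ means $\epsilon^*\mu_v^*(M_{k(v)})$ is free over $k(v)[u]/u^p$, equivalently every Jordan block of $\theta_v$ has size $p$, which forces $\JType_M(v) = (0,\ldots,0,m')$ and requires $p \mid m$; hence when $p \nmid m$ no point can be free and $V(G)_M = V(G)$. Part (2) then reads off from the definition of constant Jordan type combined with (1): if $\JType_M$ is constantly $\ul a$ on $V(G)$ (with $M$ non-projective, so $\ul a$ has some Jordan block of size strictly less than $p$), then $V^{\ul a}(G)_M = V(G) = V(G)_M$; conversely the equality $V(G)_M = V^{\ul a}(G)_M$ forces $\JType_M \equiv \ul a$ on $V(G)_M$ and $\JType_M \equiv (0,\ldots,0,m')$ on its complement, and the only way these coexist is for $V(G)_M = V(G)$ and the Jordan type to be the constant $\ul a$.

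The essential content sits in (3). Using Remark~\ref{re:theta}, $\theta_v$ is the specialization of the $k[V(G)]$-linear endomorphism $\Theta_M$ of $M \otimes k[V(G)]$, so for every $j \geq 1$ the function $v \mapsto \rk(\theta_v^j, M_{k(v)})$ is upper semicontinuous: the locus $\{v : \rk(\theta_v^j, M_{k(v)}) \leq r\}$ is cut out scheme-theoretically by the $(r+1) \times (r+1)$ minors of the matrix of $\Theta_M^j$ with respect to any fixed $k$-basis of $M$. By Lemma~\ref{reform}, the condition $\JType_M(v) \leq \ul a$ is equivalent to the finite system of inequalities
\[
\rk(\theta_v^j, M_{k(v)}) \;\leq\; \sum_{i=j+1}^p a_i(i-j), \qquad 1 \leq j < p,
\]
so $V^{\leq \ul a}(G)_M$ is a finite intersection of Zariski-closed subvarieties of $V(G)$ and hence closed. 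The only real obstacle is bookkeeping: one must verify that Lemma~\ref{reform} applies pointwise to the fibers of the family $\Theta_M$ and that the determinantal description genuinely yields a closed subscheme rather than a merely constructible set.

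Parts (4) and (5) are then formal. Since $\JType_M(v) = \ul a$ iff $\JType_M(v) \leq \ul a$ and $\JType_M(v) \neq \ul a$, one has the set-theoretic identity
\[
V^{\ul a}(G)_M \;=\; V^{\leq \ul a}(G)_M \;\setminus\; \bigcup_{\ul b < \ul a} V^{\leq \ul b}(G)_M,
\]
with the union finite because only finitely many partitions $\ul b$ of $m$ satisfy $\ul b < \ul a$; combined with (3), this exhibits $V^{\ul a}(G)_M$ as open in $V^{\leq \ul a}(G)_M$ and hence locally closed in $V(G)$. Property (5) is immediate from transitivity of the dominance order: $\JType_M(v) \leq \ul b \leq \ul a$ gives $\JType_M(v) \leq \ul a$.
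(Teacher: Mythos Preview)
Your argument is correct and follows essentially the same route as the paper: parts (1), (2), (5) are treated as immediate from definitions, part (3) is the substantive step establishing lower semicontinuity of $v \mapsto \rk(\theta_v^j, M_{k(v)})$ via the global operator $\Theta_M$, and part (4) is the formal decomposition $V^{\ul a}(G)_M = V^{\leq \ul a}(G)_M \setminus \bigcup_{\ul b < \ul a} V^{\leq \ul b}(G)_M$. The only difference in (3) is cosmetic: the paper invokes Nakayama's Lemma applied to $\Ker\{\Theta_M^j\}$ (citing \cite[4.11]{FP3}), while you cut out the rank-drop locus directly by the vanishing of $(r+1)\times(r+1)$ minors of a matrix for $\Theta_M^j$; both yield the same closed subschemes.

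One terminological slip: you call the rank function ``upper semicontinuous'', but the condition you actually use, that $\{v : \rk(\theta_v^j) \leq r\}$ is closed, is precisely \emph{lower} semicontinuity of the rank (equivalently, upper semicontinuity of $\dim\Ker$). The paper uses the correct term; you should swap yours.
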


\begin{proof}  Properties (1) and (2) follow immediately from the definitions of  $V(G)_M$
in Theorem \ref{1-param} and of constant Jordan type in Definition \ref{constant}.  Property
(5) is immediate.

To prove (3), we utilize $\theta_v=\Theta_M \otimes_{k(v)[V(G)]}{k(v)}: M_{k(v)} \to M_{k(v)}$  
described in Remark~\ref{re:theta}. Applying Nakayama's Lemma as in \cite[4.11]{FP3} to 
$\Ker \{\Theta_M^j\}, \ 1 \leq j < p$, we conclude that $\rk( \theta_v^j, M), 1 \leq j \leq p-1$, 
is lower semi-continuous.  Consequently,
(\ref{ineq}) and Lemma~\ref{reform} imply that $V^{\leq \ul a}(G)_M$ is closed.

Property (4) follows from the observation that $V^{\ul a}(G)_M$ is the complement
inside $V^{\leq \ul a}(G)_M$ of the finite union $V^{< \ul a}(G)_M = 
\cup_{\ul a^\prime < \ul a} V^{\leq \ul a^\prime}$, which is closed by (3).
\end{proof}

It is often convenient to consider the {\it stable Jordan type} of a $k[t]/t^p$-module $M$:
if $a_1[1] + \ldots + a_p[p]$ is the Jordan type of $M$, then the stable Jordan type
of $M$ is $a_1[1] + \ldots +a_{p-1}[p-1]$ (equivalently, the isomorphism class of $M$ in the stable module category $\stmod k[u]/u^p$). We define the stable
local Jordan type function
$$\ul{\JType}_M: V(G) \to \bN^{\times p-1}, \quad v \mapsto (a_1, \ldots, a_{p-1})$$
by sending $v$ to the stable Jordan type of $\theta_v^*(M_{k(v)})$.

\vspace{0.1in}

The following proposition relates the Jordan type function for a module $M$ and its Heller twist. 

\begin{prop}  For a  stable Jordan type $\ul a = \sum_{i=1}^{p-1} a_i[i]$, 
denote by $\ul a^\perp$ the ``flip" of $\ul a$, 
$$\ul a^\perp =  \sum_{i=1}^{p-1} a_{p-i}[i].$$ 
Then 
$$ \ul{\JType}_{\Omega(M)}(v) = \ul{\JType}_M(v)^\perp, \quad v \in V(G).$$ 
\end{prop}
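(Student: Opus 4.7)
The plan is to reduce the statement to a computation with $k(v)[u]/u^p$-modules by using the $\pi$-point $\alpha_v: k(v)[u]/u^p \to k(v)G$ attached to $v$.

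First, I would recall that the endomorphism $\theta_v$ on $M_{k(v)}$ is, by construction, the action of $u$ through the composition $\alpha_v = \mu_{v*} \circ \epsilon$. Thus $\ul{\JType}_M(v)$ is the stable Jordan type of $\alpha_v^*(M_{k(v)})$ as a $k(v)[u]/u^p$-module, and similarly for $\Omega(M)$. The first key step is therefore to compare $\alpha_v^*(\Omega(M)_{k(v)})$ with $\Omega(\alpha_v^*(M_{k(v)}))$ in the stable category of $k(v)[u]/u^p$. Since $\alpha_v$ is a $\pi$-point (in particular a flat map of algebras) and both $k(v)G$ and $k(v)[u]/u^p$ are self-injective, $\alpha_v^*$ sends projectives to projectives and preserves short exact sequences. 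Applying $\alpha_v^*$ to the projective cover $0 \to \Omega(M) \to P(M) \to M \to 0$ produces a short exact sequence of $k(v)[u]/u^p$-modules with free middle term, which exhibits $\alpha_v^*(\Omega(M)_{k(v)})$ and $\Omega(\alpha_v^*(M_{k(v)}))$ as stably isomorphic.

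Second, I would compute the effect of $\Omega$ on stable Jordan types directly over $k[u]/u^p$. For $0 < i < p$, the projective cover of $[i]$ is $[p]$, and the short exact sequence $0 \to [p-i] \to [p] \to [i] \to 0$ identifies $\Omega([i]) = [p-i]$ in the stable category, while $\Omega([p]) = 0$. Consequently, if $\alpha_v^*(M_{k(v)})$ has stable Jordan type $\sum_{i=1}^{p-1} a_i [i]$, then $\Omega(\alpha_v^*(M_{k(v)}))$ has stable Jordan type
\[
\sum_{i=1}^{p-1} a_i [p-i] \ = \ \sum_{j=1}^{p-1} a_{p-j} [j] \ = \ \ul{\JType}_M(v)^\perp.
\]

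Combining the two steps, $\ul{\JType}_{\Omega(M)}(v)$ equals the stable type of $\Omega(\alpha_v^*(M_{k(v)}))$, which is $\ul{\JType}_M(v)^\perp$. The only delicate point is the first step—establishing that restriction along the $\pi$-point $\alpha_v$ intertwines the ambient Heller operator with the one on $k(v)[u]/u^p$—but this reduces to flatness of $\alpha_v$ together with the fact that the group algebras in question are Frobenius, so no further obstacle arises.
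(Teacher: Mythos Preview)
Your proof is correct and follows essentially the same approach as the paper's own proof: both apply the exact restriction along $\mu_{v*}\circ\epsilon$ to the short exact sequence $0 \to \Omega M \to P \to M \to 0$ with $P$ projective (hence of Jordan type $N[p]$), and read off the relation between the stable types of $M$ and $\Omega M$. You are simply more explicit than the paper in spelling out that $\Omega([i]) = [p-i]$ over $k[u]/u^p$, which the paper leaves to the reader.
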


\begin{proof}
For any $v \in V(G)$, $\mu_v^*: (k(v)G-{\rm mod}) \to (k(v)\Gar-{\rm mod})$ is exact.  Moreover,
$\epsilon^*: (k\Gar-{\rm mod}) \to (k[u]/u^p-{\rm mod})$ is also exact.  Consequently, the existence
of a short exact sequence of the form \ $0 \to \Omega M \to P \to M \to 0$ with
$\JType_P(v) = N[p]$ for some $N$ implies the assertion.
\end{proof}

\begin{ex}  Let $g$ be a restricted Lie algebra with restricted enveloping algebra $u(g)$
(which is isomorphic to the group algebra of an infinitesimal group scheme of height 1).  Let
$\zeta$ be an even dimensional cohomology class in $\HHH^\bu(u(g),k)$, and $L_\zeta$ be the Carlson module defined by $\zeta$. 
Then $L_\zeta$ has two local Jordan types: 
it is generically projective (that is, the local Jordan type is $m[p]$  on a dense open set), and has the type $r[p] + [p-1]+ [1]$ on the hypersurface 
$\langle \zeta=0 \rangle$ in $\Spec \HHH^\bu(u(g),k)$.   Let $M$ be a $g$-module 
of constant Jordan type $\ul a$. 
Then the module $L_\zeta \otimes M$ has two local Jordan types: it is generically 
projective, and has the ``stably palindromic" type $\ul a + \ul a^\perp + [\proj]$ on 
$\langle \zeta = 0 \rangle$.
\end{ex}

We conclude this section with the following cautionary example
which shows why the construction of our local Jordan type function does not apply to
$kG$-modules $M$ for finite groups $G$.

\begin{ex}(\cite[2.3]{FPS})
\label{undefined}
Let $E = \bZ/p \times \bZ/p$, and write $kE = k[x,y]/(x^p,y^p)$.  Let $M = kE/(x-y^2)$.  Then 
$$\alpha: k[t]/t^p \to kE,\quad  t \mapsto x$$ and $$\alpha^\prime: k[t]/t^p \to kE, \quad t \mapsto x-y^2$$ 
are equivalent as $\pi$-points of $E$.  However, the Jordan type of $\alpha^*(M)$ equals 
$[\frac{p-1}{2}] + [\frac{p+1}{2}]$, whereas the Jordan type of $\alpha^{\prime *}(M)$ is $p[1]$. 
\end{ex}


\section{Maximal $j$--rank for arbitrary finite group schemes}
\label{j-type}

We begin with the following definition.

\begin{defn} 
Let $G$ be a  finite group scheme, $\alpha_K: K[t]/t^p \to KG$ be a  $\pi$-point of $G$,
and $j$ a positive integer with $1 \leq j < p$. 
Then $\alpha_K$ is said to be of maximal $j$-rank for some finite-dimensional $kG$-module
$M$ provided that the rank of $\alpha_K(t^j) = \alpha_K(t)^j: \  M_K \to M_K$ is greater or equal to 
the rank of $\beta_L(t^j): M_L \to M_L$ for any $\pi$-point $\beta_L: L[t]/t^p \to LG$.
\end{defn}

The purpose of this section is to establish in Theorem \ref{gen} that maximality of $j$-rank 
at $\alpha_K$ implies maximal $j$-rank at $\beta_L$ for any $\beta_L \sim \alpha_K$.
The proof consists of repeating almost verbatim the proof by A. Suslin and the authors
in \cite{FPS} of Theorem \ref{maximal}, so that we merely indicate here the explicit places
at which the proof of Theorem \ref{maximal} should be modified in order to prove Theorem
\ref{gen}.

The following theorem provides the key step.

\begin{thm} 
\label{max}
Let $k$ be an infinite field,  $M$ be a finite-dimensional $k$-vector space, and 
$\alpha, \alpha_1, \ldots, \alpha_n, \beta_1, \ldots, \beta_n$ be a family of  
commuting  nilpotent $k$-linear endomorphisms of $M$. Let $1\leq j \leq p-1$, and assume that  
$$\rk \alpha^j \geq \rk  (\alpha + \lambda_1\alpha_1 + \ldots + \lambda_n \alpha_n)^j$$ 
for any field extension $K/k$ and any $n$-tuple 
$(\lambda_1, \ldots,  \lambda_n) \in K^n$.  Then
$$\rk \alpha^j = \rk (\alpha + \alpha_1 \beta_1 + \ldots + \alpha_n \beta_n)^j.$$ 
In particular,  if $p(x, x_1, \ldots, x_n)$ is any polynomial  without constant  or linear term then 
$$\rk \alpha^j = \rk (\alpha + p(\alpha, \alpha_1, \ldots, \alpha_n))^j.$$
\end{thm}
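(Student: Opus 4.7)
The plan is to mimic the proof of \cite[Theorem~4.10]{FPS}, but track a single $j$-rank in place of the full Jordan type partition. The argument has two main stages: a generic-rank calculation over the polynomial ring $R = k[\lambda_1, \ldots, \lambda_n]$ using lower semi-continuity, then a controlled specialization $\lambda_l \mapsto \beta_l$. The ``in particular'' statement follows by a small algebraic manipulation that reduces it to the first assertion.

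\textbf{Step 1 (generic rank).} Fix a $k$-basis of $M$ and let $A(\lambda) \in M_m(R)$ be the matrix of $T(\lambda) := \alpha + \sum_l \lambda_l \alpha_l$; set $r = \rk \alpha^j$. The hypothesis translates to: every $(r+1) \times (r+1)$ minor of $A(\lambda)^j$ vanishes at every $K$-point of $\bA^n$ for every extension $K/k$; since $k$ is infinite, these minors vanish identically as polynomials in $R$. Combined with the fact that $\rk A(0)^j = r$ forces some $r \times r$ minor of $A(\lambda)^j$ to be a nonzero element of $k$, hence a unit in the local ring $R_{(\lambda_1, \ldots, \lambda_n)}$ and in its completion $\hat R := k[[\lambda_1, \ldots, \lambda_n]]$, this produces invertible matrices $P, Q \in GL_m(\hat R)$ with $P A(\lambda)^j Q = \bigl(\begin{smallmatrix} I_r & 0 \\ 0 & 0 \end{smallmatrix}\bigr)$.

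\textbf{Step 2 (specialization).} Let $S \subset \End_k(M)$ be the commutative $k$-subalgebra generated by $\alpha, \alpha_1, \ldots, \alpha_n, \beta_1, \ldots, \beta_n$. Since every generator is nilpotent, $S$ is a finite-dimensional local $k$-algebra with residue field $k$, hence complete; the evaluation $\phi : R \to S$, $\lambda_l \mapsto \beta_l$, is therefore local and factors through $\hat R$. Pushing the normal form of Step~1 forward, $A(\beta)^j := \phi(A(\lambda)^j) \in M_m(S)$ has free rank-$r$ image as an $S$-linear map $S^m \to S^m$. This matrix represents the base-changed endomorphism $T_S^j := T(\lambda)^j \otimes_R S$ of the free $S$-module $M \otimes_k S$, while $\gamma^j := (\alpha + \sum \alpha_l \beta_l)^j$ acts on $M$ alone; the $S$-action surjection $\Phi : M \otimes_k S \twoheadrightarrow M$, $m \otimes s \mapsto s \cdot m$, intertwines them (by commutativity of all generators), giving $\gamma^j M = \Phi(\Im T_S^j)$. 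The main obstacle is concluding that $\dim_k \gamma^j M = r$: a priori, $\Phi$ could send the rank-$r$ free $S$-summand $\Im T_S^j$ (of $k$-dimension $r \cdot \dim_k S$) onto a subspace of $M$ of any dimension between $r$ and $r \cdot \dim_k S$. The sharp equality follows from the congruence $\gamma^j \equiv \alpha^j \pmod{\mathfrak m_S^{j+1}}$, where $\mathfrak m_S$ is the maximal ideal of $S$ (note $\delta := \sum \alpha_l \beta_l \in \mathfrak m_S^2$, so each binomial term $\binom{j}{k} \alpha^{j-k} \delta^k$ with $k \geq 1$ lies in $\mathfrak m_S^{j+1}$), together with the facts that $\alpha^j$ already has rank $r$ on $M$ and that all $(r+1) \times (r+1)$ minors of $A(\beta)^j$ vanish in $S$. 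In \cite{FPS} this count is organized as a downward induction on Jordan type along a generic line through $\alpha$; the verbatim adaptation here replaces ``Jordan type'' by ``$j$-rank'' and invokes Lemma~\ref{reform} in the analogous places.

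\textbf{Step 3 (the ``in particular'' statement).} Given a polynomial $p(x, x_1, \ldots, x_n)$ without constant or linear term, group its monomials by the first variable present to write $p = x\, q_0 + \sum_{l=1}^n x_l\, q_l$ with each $q_\bullet$ of positive total degree. Evaluating at our commuting nilpotents, $u := 1 + q_0(\alpha, \alpha_1, \ldots, \alpha_n)$ is a unit in the commutative subring $k[\alpha, \alpha_1, \ldots, \alpha_n]$, and
$$\alpha + p(\alpha, \alpha_1, \ldots, \alpha_n) = u\bigl(\alpha + \sum_l \alpha_l \beta_l\bigr), \qquad \beta_l := u^{-1} q_l(\alpha, \alpha_1, \ldots, \alpha_n),$$
with each $\beta_l$ a commuting nilpotent (product of a nilpotent and a commuting unit in the commutative subring). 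Since $u^j$ is a commuting unit, $\rk(\alpha + p)^j = \rk(\alpha + \sum_l \alpha_l \beta_l)^j = \rk \alpha^j$ by the first part.
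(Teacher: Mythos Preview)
Your Step~1 and Step~3 are fine, but Step~2 contains a genuine gap at exactly the point you flag as the ``main obstacle,'' and the resolution you offer does not close it.

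Concretely: from the Smith normal form over $\hat R$ you correctly obtain that $A(\beta)^j \in M_m(S)$ has image a free rank-$r$ summand of $S^m$, and via the intertwining $\Phi \circ A(\beta)^j = \gamma^j \circ \Phi$ you get $\gamma^j(M) = \Phi(\Im_S A(\beta)^j)$. But $\Phi$ is only $S$-linear, so this exhibits $\gamma^j(M)$ as an $S$-submodule of $M$ generated by $r$ elements---which bounds its $k$-dimension by $r\cdot \dim_k S$, not by $r$. The two facts you invoke do not repair this: the vanishing of the $(r{+}1)$-minors of $A(\beta)^j$ is a statement about elements of the ring $S$, not about the $k$-rank of $\gamma^j$ acting on $M$ (the map $M_m(S)\to \End_k(M)$ you are implicitly using is not a ring homomorphism, so minor identities in $M_m(S)$ do not transfer); and the congruence $\gamma^j \equiv \alpha^j \pmod{\mathfrak m_S^{j+1}}$ only says $\gamma^j - \alpha^j$ is a commuting nilpotent, which in general can lower or raise rank. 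Your final sentence defers to a ``verbatim adaptation'' of \cite[4.10]{FPS}, but that is the wrong target: \cite[4.10]{FPS} is the $\pi$-point theorem (the analogue of Theorem~\ref{maximal} here), not the commuting-nilpotents linear-algebra lemma you need.

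The paper's proof is much shorter and structurally different: the case $j=1$ is exactly \cite[1.8]{FPS}, and the passage to arbitrary $j$ is handled by \cite[1.11]{FPS}, which packages the reduction from $j$-rank to rank. If you want to make your direct route work, you must supply an honest argument for the upper bound $\rk_k \gamma^j \le r$ (e.g., produce $m-r$ elements of $\ker\gamma^j$ by specializing a generic kernel basis and controlling degeneration, or factor $\gamma^j$ through a $k$-vector space of dimension $r$); the completion/normal-form machinery alone does not give it.
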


\begin{proof} 
For $j=1$, this is \cite[1.8]{FPS}. For  general $j$, the statement 
follows by applying Corollary 1.11 of \cite{FPS}. 
\end{proof}

For any $\pi$-point $\alpha_K: K[t]/t^p \to KG$, we denote by $\rk(\alpha_K(t^j), M_K)$
the rank of the $K$-linear endomorphism $\alpha_K(t^j): M_K \to M_K$.

In the next 3 propositions, we consider the special cases in which $G$ is an elementary
abelian $p$-group, an abelian finite group scheme, and an infinitesimal finite group 
scheme.  In this manner, we follow the strategy of the proof of Theorem \ref{maximal}.

\begin{prop}
\label{elem-ab}
Let $E$ be an elementary abelian $p$-group of rank $r$,   let $M$ be a finite dimensional 
$kE$-module, 
and let $\alpha_K$ be a $\pi$-point of $E$ which is of maximal $j$-rank for $M$. 
Then for any $\beta_L \sim \alpha_K$, 
$$\rk(\alpha_K(t^j), M_K) = \rk(\beta_L(t^j), M_L).$$ 
\end{prop}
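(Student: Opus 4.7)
The strategy mirrors the proof of Theorem \ref{maximal} from \cite{FPS}, with Theorem \ref{max} as the key computational input.

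First, I would base change everything to a common infinite field extension $\Omega$ of $K$ and $L$ (for instance, the algebraic closure of a compositum $KL$). Since rank is preserved under flat base change, the assertion reduces to showing
$$
\rk(\alpha_\Omega(t)^j, M_\Omega) \ = \ \rk(\beta_\Omega(t)^j, M_\Omega).
$$
Write $kE = k[x_1,\ldots,x_r]/(x_1^p,\ldots,x_r^p)$ with radical $J = (x_1,\ldots,x_r)$. For elementary abelian $E$, $\pi$-point equivalence is governed by the image in $J/J^2$ (corresponding to the identification $\Pi(E) \simeq \bP^{r-1}$): $\alpha_\Omega \sim \beta_\Omega$ forces the linear parts of $\alpha_\Omega(t)$ and $\beta_\Omega(t)$ to be proportional. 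Rescaling $\beta_\Omega$ by a nonzero scalar (which changes neither its equivalence class nor its $j$-rank on $M$), I may assume the two linear parts agree, so that
$$
q \ := \ \beta_\Omega(t) - \alpha_\Omega(t) \ \in \ J^2 \cdot \Omega E.
$$

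Next, I would set $\alpha = \alpha_\Omega(t)$ and regard $\alpha, x_1, \ldots, x_r$ as pairwise commuting nilpotent $\Omega$-linear endomorphisms of $M_\Omega$. Theorem \ref{max} then applies once the rank bound
$$
\rk\bigl((\alpha + \lambda_1 x_1 + \cdots + \lambda_r x_r)^j, M_{\Omega'}\bigr) \ \leq \ \rk(\alpha^j, M_\Omega) \qquad (\ast)
$$
is verified for every field extension $\Omega'/\Omega$ and every $(\lambda_1,\ldots,\lambda_r) \in (\Omega')^r$. Whenever the linear part of $\alpha + \sum \lambda_i x_i$ is nonzero in $(J/J^2) \otimes \Omega'$, the map $t \mapsto \alpha + \sum \lambda_i x_i$ is a bona fide $\pi$-point of $E$ over $\Omega'$, and $(\ast)$ is immediate from the maximal $j$-rank hypothesis on $\alpha_K$. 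The vanishing of the linear part cuts out a single closed point of $\bA^r_{\Omega'}$, so the ``good'' locus is dense. Since the rank of a matrix-valued polynomial function is lower semi-continuous in its parameters, the set $\{\lambda \in \bA^r : \rk \leq \rk(\alpha^j)\}$ is closed; containing a dense open subset, it must be all of $\bA^r_{\Omega'}$, which extends $(\ast)$ to every $(\lambda_i)$.

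Finally, writing $q = p(x_1,\ldots,x_r)$ for some polynomial $p$ with no constant or linear term, the ``in particular'' clause of Theorem \ref{max} yields
$$
\rk\bigl(\beta_\Omega(t)^j, M_\Omega\bigr) \ = \ \rk\bigl((\alpha + q)^j, M_\Omega\bigr) \ = \ \rk(\alpha^j, M_\Omega),
$$
as desired. The main technical point I anticipate is the verification of $(\ast)$ at those scalars for which $\alpha + \sum \lambda_i x_i$ fails to be a $\pi$-point; handling this exceptional locus via the semi-continuity observation is the hinge that lets the maximality of $\alpha_K$ feed into Theorem \ref{max}.
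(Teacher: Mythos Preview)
Your argument is correct and follows essentially the same route as the paper. The paper's own proof is a one-line citation---``the proof of \cite[2.7]{FPS} applies verbatim provided one replaces references to \cite[1.12]{FPS} by references to \cite[1.9]{FPS}''---and what you have written is precisely a fleshed-out version of that argument: pass to a common infinite extension, use that equivalent $\pi$-points of an elementary abelian $p$-group have proportional linear parts in $J/J^2$, normalize so that $\beta_\Omega(t)-\alpha_\Omega(t)\in J^2$, and feed this into Theorem~\ref{max}. Your handling of the exceptional point $(\lambda_i)=(-a_i)$ via lower semicontinuity of rank is exactly the kind of limiting argument that underlies the results in \S1 of \cite{FPS}, so there is no genuine divergence in method.
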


\begin{proof}
The proof of  \cite[2.7]{FPS} applies verbatim provided
one replaces references to  \cite[1.12]{FPS} by references to  \cite[1.9]{FPS}.
\end{proof}

\begin{prop}
\label{ab}
Let $C$ be an abelian finite group scheme over $k$, let $M$ be a finite dimensional $kC$-module, 
and let
$\alpha_K$ be a $\pi$-point of $C$  which is of maximal $j$-rank for $M$.  Then for any $\beta_L \sim \alpha_K$,
$$\rk(\alpha_K(t^j), M_K) = \rk(\beta_L(t^j), M_L).$$ 
\end{prop}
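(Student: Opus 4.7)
The plan is to imitate the proof of the corresponding statement for Theorem \ref{maximal} in \cite{FPS} (namely \cite[3.3]{FPS} or the analogue thereof), making the same kind of minor modification we made when passing from Theorem \ref{maximal} to Theorem \ref{gen}: wherever the original proof invokes the ``$j=1$'' rank result \cite[1.12]{FPS} (or any of its consequences pertaining to the full Jordan type), we instead invoke the $j$-rank statement packaged here as Theorem \ref{max}. Since Theorem \ref{max} has exactly the shape needed (commuting nilpotents, polynomial perturbation without constant or linear term), the FPS argument transfers essentially verbatim.

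First I would carry out the usual base-change reductions: after replacing $k$ by a sufficiently large extension containing both $K$ and $L$, we may assume $k$ is algebraically closed and that $\alpha_K$, $\beta_L$ are defined over the same field. Since every $\pi$-point of $C$ factors through a unipotent abelian subgroup scheme of $C_K$, and $C$ is already abelian, we lose nothing by replacing $C$ with its unipotent part and assuming $C$ is abelian unipotent. Over an algebraically closed field, the structure theory of abelian unipotent finite group schemes gives an isomorphism
\[
kC \ \cong \ k[x_1, \ldots, x_n]/(x_1^{p^{e_1}}, \ldots, x_n^{p^{e_n}})
\]
with commuting nilpotent generators $x_i$, so that $M$ becomes a module over this truncated polynomial algebra and the $x_i$ act as commuting nilpotent endomorphisms.

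Next I would invoke the description of $\pi$-point equivalence for abelian $C$ from \cite{FP2}: after our reductions, $\alpha_K \sim \beta_L$ implies that, up to rescaling, the images $\alpha_K(t)$ and $\beta_L(t)$ agree modulo terms which, when expressed in the $x_i$, have no constant term and no linear term in the ``top'' generators $x_i^{p^{e_i - 1}}$ that control the $\pi$-point equivalence class. Equivalently, writing $\alpha = \alpha_K(t)$ and $\alpha' = \beta_L(t)$ as elements of the common group algebra, we have
\[
\alpha' \ = \ \alpha + p(\alpha, \alpha_1, \ldots, \alpha_m)
\]
for commuting nilpotent operators $\alpha_i$ (themselves polynomials in the $x_\ell$) and a polynomial $p$ without constant or linear term. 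Theorem \ref{max} then applies directly: the hypothesis that $\alpha_K$ has maximal $j$-rank on $M$ is exactly the hypothesis $\rk \alpha^j \geq \rk(\alpha + \lambda_1\alpha_1 + \cdots + \lambda_m\alpha_m)^j$ of Theorem \ref{max} (for all scalar values $\lambda_i$ in any extension, because each such combination is again realized by some $\pi$-point), and the conclusion $\rk \alpha^j = \rk(\alpha')^j$ is precisely the desired equality of $j$-ranks.

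The main obstacle is step three: making rigorous the claim that $\beta_L(t)$ differs from $\alpha_K(t)$ by a polynomial in commuting nilpotents with no constant or linear term. This is the technical core inherited from \cite[3.3]{FPS}, and it rests on the explicit description of $\pi$-point equivalence classes on an abelian unipotent finite group scheme in terms of leading-order $p$-power data. Once this is in place, the remainder of the argument is a mechanical substitution into Theorem \ref{max}, exactly as in the elementary abelian case handled by Proposition \ref{elem-ab}.
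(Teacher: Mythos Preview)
Your approach is essentially the paper's: follow the proof of the corresponding result in \cite{FPS} verbatim, replacing invocations of the Jordan-type maximality lemma \cite[1.12]{FPS} by the $j$-rank version, Theorem~\ref{max}. Two small corrections: the relevant result in \cite{FPS} is \cite[2.9]{FPS}, not \cite[3.3]{FPS}; and the paper also notes that the references in \cite[2.9]{FPS} to the elementary abelian case \cite[2.7]{FPS} must be replaced by Proposition~\ref{elem-ab}, which your sketch does not invoke as a lemma (you mention it only as an analogy). In the actual \cite[2.9]{FPS} argument, the elementary abelian result is used to control the ``linear part'' of the $\pi$-point after projecting to an elementary abelian quotient, while the perturbation lemma handles the higher-order terms, so your step three is not purely a direct application of Theorem~\ref{max} but also rests on Proposition~\ref{elem-ab}.
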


\begin{proof}
The proof of  \cite[2.9]{FPS} applies verbatim provided
one replaces references to  \cite[2.7]{FPS} by references to Proposition \ref{elem-ab}
and references to  \cite[1.12]{FPS} by references to  Theorem \ref{max}.
\end{proof}

\begin{prop}
\label{infin-max}
Let $G$ be an infinitesimal group scheme over $k$ and let $M$ be a finite 
dimensional $kG$-module.  Let $\beta_L: L[t]/t^p \to LG$ be 
a $\pi$-point of $G$ with the 
property that the $j$-rank of $\beta_L^*(M_L)$ is maximal for $M$. 
Then for any $\pi$-point $\alpha_K: K[t]/t^p \to KG$ which specializes 
to $\beta_L$,
$$\rk(\alpha_K(t^j), M_K) = \rk(\beta_L(t^j), M_L).$$ 
\end{prop}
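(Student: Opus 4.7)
The plan is to mirror the proof of the infinitesimal-group-scheme case of Theorem~\ref{maximal} from~\cite{FPS}, tracking the $j$-rank of $t$ in place of the full Jordan type and invoking Theorem~\ref{max} in place of \cite[1.12]{FPS} and Proposition~\ref{ab} in place of the abelian-group input \cite[2.9]{FPS}; this parallels precisely the substitutions made in the preceding Propositions~\ref{elem-ab} and~\ref{ab}.

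First, I would reduce to $\pi$-points factoring through one-parameter subgroups. Since $G$ is infinitesimal of some height $\leq r$, every $\pi$-point of $G$ is equivalent to one of the form $\epsilon \circ \mu_{v*} \colon K[t]/t^p \to K\bG_{a(r)} \to KG$ for a $K$-point $v \in V(G)$; this is part of the correspondence underlying the homeomorphism $\Pi(G) \simeq \Proj k[V(G)]$ implicit in Theorem~\ref{cohom}. Using Proposition~\ref{ab} applied to the abelian unipotent subgroup scheme through which each $\pi$-point factors, I can replace $\beta_L$ and $\alpha_K$ with $\pi$-points arising from points $v \in V(G)(L)$ and $w \in V(G)(K)$ respectively, without losing the maximality of $\rk(\beta_L(t^j), M_L)$ and with $\alpha_K$ still specializing to $\beta_L$. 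Under this translation, the specialization relation becomes the set-theoretic specialization $v \in \overline{\{w\}}$ in $V(G)$.

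Next, I would apply the global $p$-nilpotent operator $\Theta_M \colon M \otimes k[V(G)] \to M \otimes k[V(G)]$ of \cite[2.4]{FP3}, recalled in Remark~\ref{re:theta}. The fiberwise rank $\rk(\theta_u^j, M_{k(u)})$ coincides with the rank of $\Theta_M^j \otimes_{k[V(G)]} k(u)$, and Nakayama's Lemma applied to $\Ker \Theta_M^j$ (as used in Section~\ref{refined} to establish closedness of $V^{\leq \ul a}(G)_M$) shows that this rank is lower semi-continuous in $u$. Because $v$ lies in the closure of $w$, this yields $\rk(\theta_w^j, M_{k(w)}) \geq \rk(\theta_v^j, M_{k(v)})$, while maximality of the right-hand side among all $\pi$-points of $G$ supplies the reverse inequality, forcing equality.

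The main obstacle is the first reduction step: one must ensure that the passage to one-parameter-subgroup representatives preserves both the maximality of the $j$-rank at $\beta_L$ and the specialization relation between $\alpha_K$ and $\beta_L$. Preservation of $j$-rank across an equivalence of $\pi$-points at a maximal-$j$-rank representative is exactly Proposition~\ref{ab}, while preservation of the specialization relation is built into the identification $\Pi(G) \simeq \Proj k[V(G)]$ of Theorem~\ref{cohom}. With those in hand, the semi-continuity argument dispatches the conclusion.
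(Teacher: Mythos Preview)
Your opening paragraph is exactly the paper's proof: the paper simply says that the argument of \cite[3.5]{FPS} goes through verbatim once references to \cite[2.9]{FPS} are replaced by Proposition~\ref{ab}. (A small correction: the paper does not separately invoke Theorem~\ref{max} at this stage---that substitution was already absorbed into the proof of Proposition~\ref{ab}.)

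Your detailed reconstruction of \cite[3.5]{FPS} uses the right two ingredients, Proposition~\ref{ab} and lower semi-continuity of $\rk(\theta^j_{(-)},M)$ on $V(G)$, but the order of the steps needs adjustment. You propose to replace \emph{both} $\beta_L$ and $\alpha_K$ by one-parameter-subgroup representatives via Proposition~\ref{ab}, and only afterwards run semi-continuity. The difficulty is that Proposition~\ref{ab} guarantees equal $j$-rank between equivalent $\pi$-points only when one of them is already known to have maximal $j$-rank. For $\beta_L$ this is given; for $\alpha_K$ it is exactly what you are trying to prove, so invoking Proposition~\ref{ab} for $\alpha_K$ at the outset is circular. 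The fix is to swap the last two moves: replace only $\beta_L$ by $\epsilon\circ\mu_{v*}$; choose $w$ inside $V(C_K)$, where $C_K$ is the abelian unipotent subgroup through which $\alpha_K$ factors, so that $\alpha_K\sim\epsilon\circ\mu_{w*}$ as $\pi$-points of $C_K$; use $v\in\overline{\{w\}}$ in $V(G)$ together with semi-continuity to see that $\epsilon\circ\mu_{w*}$ already has maximal $j$-rank for $M$ (hence for $M|_{C_K}$); and only then apply Proposition~\ref{ab} inside $C_K$ to transfer maximality to $\alpha_K$. With this reordering your sketch is complete and matches the intended argument.
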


\begin{proof}
The proof of  \cite[3.5]{FPS} applies verbatim provided
one replaces references to  \cite[2.9]{FPS} by references to Proposition~\ref{ab}.
\end{proof}

We now state and prove the assertion that maximality of $j$-rank at $\alpha_K$
implies maximality of $j$-rank at $\beta_L$ for any $\beta_L \sim \alpha_K$.   This
statement for all $j, 1 \leq j < p$, implies the maximality of Jordan
type as asserted in Theorem \ref{maximal}. 

\begin{thm}
\label{gen}
Let $G$ be a finite group scheme over $k$ and let $M$ be a finite 
dimensional $kG$-module.  Let $\alpha_K: K[t]/t^p \to KG$ be a $\pi$-point 
of $G$ which is of  
maximal $j$-rank for $M$. Then for any $\pi$-point 
$\beta_L: L[t]/t^p \to LG$ that specializes to $\alpha_K$, we have 
$$\rk(\alpha_K(t^j), M_K) = \rk(\beta_L(t^j), M_L).$$ 
\end{thm}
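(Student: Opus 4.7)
The plan is to repeat the proof of \cite[Theorem 4.10]{FPS} essentially verbatim, performing the same kind of mechanical substitution that has already been used for the three preceding propositions of this section. Concretely, every reference in \cite[4.10]{FPS} to the abelian-case Jordan-type result \cite[2.9]{FPS} becomes a reference to our Proposition \ref{ab}; every reference to the infinitesimal-case result \cite[3.5]{FPS} becomes a reference to Proposition \ref{infin-max}; and the rank lemmas \cite[1.8]{FPS}--\cite[1.12]{FPS} that drive the elementary manipulations are replaced by our Theorem \ref{max}.

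The logical skeleton is as follows. By the definition of a $\pi$-point, both $\alpha_K: K[t]/t^p \to KG$ and $\beta_L: L[t]/t^p \to LG$ factor through unipotent abelian subgroup schemes $C_K \subset G_K$ and $D_L \subset G_L$. Following \cite[\S4]{FPS}, one passes to a sufficiently large common field $\Omega$ and uses the intrinsic module-theoretic definition of specialization to realize both $\pi$-points as specialization-related $\pi$-points of a single abelian finite group scheme $C$ over $\Omega$, with the hypothesis of maximal $j$-rank for $\alpha_K$ transferred intact. Once this reduction is in place, Proposition \ref{ab} applied to $C$ gives
\[
\rk(\alpha_K(t^j), M_K) \;=\; \rk(\beta_L(t^j), M_L),
\]
which is the desired conclusion.

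The main obstacle — and the reason the previous propositions of this section were needed in advance — is the reduction step itself: the specialization relation $\beta_L \rightsquigarrow \alpha_K$ is defined intrinsically in terms of freeness of the restriction $\alpha^*(-)$ on \emph{all} finite-dimensional $kG$-modules, and translating it into a comparison inside a fixed abelian subgroup scheme requires the generic-point and base-change arguments of \cite[\S4]{FPS}. Those arguments, however, are completely insensitive to the distinction between preservation of full Jordan type and preservation of $\rk(t^j)$ for a single fixed $j$: the only hard input is a ``rank lemma'' of the kind provided by Theorem \ref{max}. Since we have formulated Theorem \ref{max} precisely as the $j$-rank analogue of \cite[1.12]{FPS}, the substitution goes through without introducing any genuinely new argument.
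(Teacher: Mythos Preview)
Your proposal is correct and matches the paper's own proof essentially verbatim: the paper simply states that the proof of \cite[4.10]{FPS} applies with references to \cite[2.9]{FPS} replaced by Proposition~\ref{ab} and references to \cite[3.5]{FPS} replaced by Proposition~\ref{infin-max}. Your additional mention of Theorem~\ref{max} is harmless but unnecessary here, since that input has already been absorbed into Propositions~\ref{ab} and~\ref{infin-max}.
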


\begin{proof}
The proof of  \cite[4.10]{FPS} applies verbatim provided
one replaces references to  \cite[2.9]{FPS} by references to  Proposition~\ref{ab} and references to  \cite[3.5]{FPS} by references to  Proposition~\ref{infin-max}.
\end{proof}

We can now generalize the {\it modules of constant j-rank} as defined for infinitesimal group schemes in \cite{FP3} to all finite group schemes.

\begin{defn} 
\label{constant_rank}
A finite dimensional $kG$-module $M$ is said to be of {\it constant $j$-rank}, $1 \leq j < p$, if 
for any two $\pi$-points $\alpha_K: K[t]/t^p \to KG$, $\beta_L: L[t]/t^p \to LG$,  we have
$$\rk(\alpha_K(t^j), M_K) = \rk(\beta_L(t^j), M_L).$$
\end{defn} 

\begin{remark} By Theorem \ref{gen}, $M$ has constant $j$-rank $n$ if and only if
for each point of $\Pi(G)$ there is some $\pi$-point representative $\alpha_K$ with
$\rk(\alpha_K(t^j), M_K) = n$.
\end{remark} 

Evidently, a $kG$-module has constant Jordan type if and only if it has constant $j$-rank for 
all $j, 1 \leq j < p$ (see (\ref{ineq})).  

We shall say that $M$ is a {\it module of constant rank}  if it has constant $1$-rank. Every module of
constant Jordan type has, by definition, constant rank.  On the other hand, there are 
numerous examples of modules of constant rank which do not have constant Jordan type.
For example, if $\zeta \in \HHH^{2i+1}(G,k)$ is non-zero and $p > 2$, then the Carlson module
$L_\zeta$ is a $kG$-module of constant rank but not of constant Jordan type.

We finish this section with a cautionary example that illustrates that not all properties of maximal or constant Jordan type have natural analogues for 
maximal or constant rank.  Recall that a {\it generic Jordan type} of a $kG$-module $M$  is the Jordan type at a $\pi$-point  which represents a generic point of $\Pi(G)$.  By the main theorem of \cite{FPS}, it is well-defined. If $\Pi(G)$ is irreducible, we can therefore refer to the generic Jordan type of $M$. We can similarly define a {\it generic $j$-rank} of a $kG$-module to be $\rk(\alpha_K(t^j), M_K)$ for a $\pi$-point $\alpha$ of $G$ representing a generic point of $\Pi(G)$. By \cite[4.2]{FPS}, generic $j$-rank is well-defined.

\begin{ex}
\label{tensor}
Throughout this example we are using the formula for the tensor product of Jordan types (see, for example, \cite[Appendix]{CFP}).

\vspace{0.1in} (1). Let $\ul a = \sum a_i[i], \ul b = \sum b_i[i]$ be two Jordan types (or partitions) of the same cardinality.
In \cite[4.1]{CFP} the authors showed that $\ul a \geq \ul b$ implies $\ul a \otimes \ul c \geq \ul b \otimes  \ul c$ for any Jordan type $\ul c$. The  analogous statement is not true for ranks. 

Indeed, let $\ul a = 3[2]$, $\ul b =[3] + 3[1]$, and $\ul c = [2]$. Then 
$$\rk \ul a = 3 > \rk \ul b =2.$$
Since $\ul a \otimes \ul c = 3[3] + 3[1]$ and $\ul b \otimes \ul c  = [4] + 4[2]$, we have  
$$\rk \ul a \otimes \ul c = 6 < \rk \ul b \otimes \ul c = 7.$$

\vspace{0.1in} (2). The first part of this example illustrates a common failure of the upper semi-continuity property of the 
ranks of partitions with respect to tensor product.   Since this fails for partitions, it is reasonable to expect the 
same property  to fail for maximal ranks of modules.  The following is an explicit realization by $kG$-modules of this 
failure of upper semi-continuity. This example also shows that $M\otimes N$ can fail to have maximal rank at a 
$\pi$-point at which both $M$ and $N$ have maximal rank. This should be contrasted with the situation for maximal 
Jordan types (~\cite[4.2]{CFP}).

Let $G = \bG_{a(1)}^{\times 2} $ so that $kG \simeq k[x,y]/(x^p,y^p)$. 
Consider the $kG$-module $M$ of Example~\cite[2.4]{CFP}, pictured as follows: 
$$\begin{xy}*!C\xybox{%
\xymatrix{ &{\bu} \ar[dl]|y \ar[dr]|x 
&&{\bu} \ar[dl]|y \ar[dr]|x
&&{\bu} \ar[dl]|y\ar[dr]|x 
&&{\bu} \ar[dl]|y \ar[dr]|x\\
\bu \ar[dr]|x && \bu \ar[dl]|y \ar[dr]|x && \bu 
\ar[dl]|y \ar[dr]|x && \bu \ar[dl]|y \ar[dr]|x && \bu \ar[dl]|y \\
& \bu && \bu && \bu && \bu}}
\end{xy}
$$
Recall that $\Pi(G) \simeq \Proj \HHH^\bu(G,k)\simeq\mathbb P^1$. 
A point $[\lambda_1: \lambda_2]$ on $\mathbb P^1$ is represented by a $\pi$-point 
$\alpha: k[t]/t^p \to kG$ such that $\alpha(t) = \lambda_1 x + \lambda_2 y$. 

 For $p>5$, the module M has two Jordan types: the generic type $4[3] + 1[1]$ and the 
 singular type $3[3] + 2[2]$, which occurs at $[1:0]$ and $[0:1]$ (see \cite[2.4]{CFP}).  
 Hence, $M$ has constant rank.   We compute possible local Jordan types of $M \otimes M$ 
 using the fact that $\mu_{v*}: k(v)[t]/(t^p) \to k(v)G$ is a map
 of Hopf algebras for any $v \in V(G)$: 
\begin{enumerate}\item[(i)]
$(4[3] + 1[1])^{\otimes 2} = 16[5] + 24[3] + 17[1]$,
\item[(ii)]
$(3[3] + 2[2])^{\otimes 2} = 9[5] + 16[4] + 13[3] + 12[2] + 9[1].$
\end{enumerate} 
By \cite[4.4]{FPS}, the first type is the generic Jordan type of $M \otimes M$. Hence, the  
generic (and maximal) rank of $M \otimes M$ is 112. 
On the other hand, the rank of the second type is 110. Hence, the rank of $M$ at the 
points  $[1:0], [0:1]$ is maximal, but the rank of $M \otimes M$ is not.

\vspace{0.1in} (3).  Yet another result in \cite{CFP}, a direct consequence of the result 
on the tensor products of maximal types mentioned in (2), states that a tensor product 
of modules of constant Jordan type is a module of constant Jordan type.  This distinguishes 
the family of modules of constant Jordan type from the modules of constant rank, for which 
this property fails.  Let $M$ be the same as in (2). The calculation above shows that $M$ 
is of constant rank but $M \otimes M$ is not.

\vspace{0.1in}  We also give an example of a different nature, avoiding point by point calculations 
of Jordan types. This example was pointed out to us by the referee.  Let $M$ be a cyclic $kG$-module 
of dimension less than $p$ (e.g., $M = k[x,y]/(x^2, y^p)$). We have a short exact sequence $0 \to \Omega M \to kG \to M \to 0$.  
This implies that the local Jordan type of $\Omega M$ necessarily has $p$ blocks, and, hence, $\Omega M$ 
has constant rank. Since $\Omega M \otimes  \Omega^{-1}k \simeq M \oplus [\proj]$, we conclude that the 
tensor product of two modules of constant rank produces a module which is not of constant rank.

\end{ex}


\section{Refined support varieties for arbitrary finite group schemes}
\label{arbitrary}

In this section, we introduce the non-maximal
support varieties $\Gamma^j(G)_M$ for an arbitrary finite group scheme, finite
dimensional $kG$-module $M$, and integer $j, 1 \leq j < p$.  These are well
defined thanks to Theorem \ref{gen}.  After verifying a few simple properties of
these refined support varieties, we investigate various explicit examples.

\begin{defn} 
\label{gamma-j}  Let $G$ be a finite group scheme, and let
$M$ be a finite dimensional $kG$-module. Set
$$\Gamma^j(G)_M = \{[\alpha_K] \in \Pi(G) \, | \, 
\rk(\alpha_K(t^j), M_K) \text{ is not maximal} \},$$
the non-maximal $j$-rank variety of $M$.
\end{defn}

Our first example demonstrates that $\{ \Gamma^j(G)_M \}$
is a finer collection of geometric invariants than $\Pi(G)_M$.

\begin{ex}
\label{std-fp}
Let $G = \GL(3,\bF_p)$ with $p > 3$.   By \cite{Q} (see \cite[4.10]{FPS}),
the irreducible components
of $\Pi(G)$ are indexed by the conjugacy classes of maximal elementary 
$p$-subgroups of $G$ which are represented by subgroups of the unipotent 
group $U(3,\bF_p)$ of strictly upper triangular
matrices.  There are 3 such conjugacy classes, represented by the following subgroups:

\vspace{0.1in}
\noindent
$\left\{  
\left (
\begin{array}{cccccccc}
1&a&b\\
0&1&a\\
0&0&1\\
\end{array}
\right ) a,b \in \mathbb F_p \right \} \,
\left\{\left (
\begin{array}{cccccccc}
1&a&b\\
0&1&0\\
0&0&1\\
\end{array}
\right ) a,b \in \mathbb F_p \right \} \,
\left\{\left (
\begin{array}{cccccccc}
1&0&b\\
0&1&a\\
0&0&1\\
\end{array}
\right ) a,b \in \mathbb F_p \right \}   
$
\vspace{0.1in}

\noindent
Let $M$ be the second symmetric power of the standard 3-dimensional
 (rational) representation of $G$.  
Then the generic Jordan type of $M$ indexed by the first of these maximal 
elementary abelian subgroups 
of $G$  is $[3] + 3[1]$, whereas the Jordan types 
indexed by each of the other conjugacy classes of maximal elementary 
abelian $p$--subgroups
are $[2] + 4[1]$.

Thus, $\Pi(G)_M = \Pi(G)$ provides no information about $M$.

On the other hand, $\Gamma(G)_M = \Gamma^1(G)_M = \Gamma^2(G)_M$ equals the 
union of the two irreducible components of $\Pi(G)$ corresponding to the second and third
maximal elementary abelian $p$--subgroups, whereas $\Gamma^i(G)_M = \emptyset$ for $i > 2$.
\end{ex}
\vskip .1in

Our second example shows that $\Gamma^i(G)_M$ and $\Gamma^j(M)$ can be 
different, proper subsets of $\Pi(G)$.

\begin{ex}
In \cite[4.13]{FPS} A. Suslin  and the authors constructed an example of a finite group $G$ and a finite dimensional $G$-module $M$, such that $\Pi(G) = X \cup Y$   has two irreducible components and the generic Jordan types of $M$ at the generic points of $X$ and $Y$ respectively are incomparable.    Let $G$ and $M$ satisfy this property, and let $\alpha_K$ and $\beta_L$  be generic $\pi$-points of $X$ and $Y$ respectively.  
If $\JType(\alpha^*_K(M_K))$ and $\JType(\beta^*_L(M_L))$ are incomparable, then Lemma~\ref{reform} implies that there exist $i\not = j$ such that $\rk(\alpha_K(t^i), M_K) > \rk(\beta_L(t^i), M_L)$ but $\rk(\alpha_K(t^{j}), M_K) < \rk(\beta_L(t^{j}), M_L)$. Hence, $\Gamma^i(G)_M$ is a proper subvariety that contains the irreducible component $Y$ whereas $\Gamma^{j}(G)_M$  is a proper subvariety that contains the irreducible component $X$.    
\end{ex}

\vskip .1in

Our third example is a simple computation for a general finite group scheme. It provides another possible  ``pattern" for the varieties $\Gamma^i(G)_M$.

\begin{ex} Let $\zeta_1 \in \HHH^{n_1}(G,k)$ be an even dimensional class, and  $\zeta_2 \in \HHH^{n_2}(G,k)$ be an odd dimensional class.  Consider $L_{\ul \zeta} = L_{\zeta_1, \zeta_2}$, the kernel of the map
$$\zeta_1 + \zeta_2: \Omega^{n_1} k \oplus \Omega^{n_2} k \to k$$
The local Jordan type of $L_{\ul \zeta}$ at a $\pi$-point $\alpha$ is given in the following table: 

\noindent
$\left\{\begin{tabular}{ll}
$r[p]+[p-1]$, \quad & $\alpha^*(\zeta_1) \not = 0$\\
$r[p]+[p-2] + [1]$, \quad & $\alpha^*(\zeta_1)=0$, $\alpha^*(\zeta_2) \not = 0$\\
$(r-1)[p]+2[p-1] + [1]$, \quad & $\alpha^*(\zeta_1)=\alpha^*(\zeta_2)  = 0$\\
\end{tabular}
\right.$

\noindent
Hence, $\Gamma^1(G)_{L_{\ul \zeta}} = \ldots = 
\Gamma^{p-2}(G)_{L_{\ul \zeta}} = Z(\zeta_1)$, whereas 
$\Gamma^{p-1}(G)_{L_{\ul \zeta}} = Z(\zeta_1) \cap Z(\zeta_2)$, where $Z(\zeta_1)$ denotes the zero locus of a class $\zeta_1 \in \HHH^\bu(G,k)$ and $Z(\zeta_2)$ for $\zeta_2 \in \HHH^{\rm odd}(G,k)$ is defined in (\ref{Zzeta}).

\end{ex}

\vskip .1in
We next verify a few elementary properties of $M \mapsto \Gamma^j(G)_M$. Some of them are analogous to the properties of $\Gamma(G)_M$ stated in Prop~\ref{gamma}.

\begin{prop}
\label{closed}
Let $G$ be a finite group scheme and $M$ a finite dimensional
$kG$-module. 
\begin{enumerate}
\item\label{cl_pr} 
 $\Gamma^j(G)_M$ is a proper closed subset of $\Pi(G)$ for  $1 \leq j < p$.
 \item\label{cl_em}
$\Gamma^j(G)_M = \emptyset$ if and only if $M$ has constant $j$-rank. 
\item\label{cl_st} If $M$ and $N$ are stably isomorphic, then $\Gamma^j(G)_M = \Gamma^j(G)_N$ 
\item\label{cl_pl} If $M$ is a module of constant $j$-rank, then $\Gamma^j(G)_{M\oplus N} = \Gamma^j(G)_N$.
\item\label{cl_om} $\Gamma^j(G)_M = \Gamma^j(G)_{\Omega^2(M)}$. 
\item \label{cl_ga}
$\Gamma(G)_M \ = \ \cup_{1\leq j < p} \Gamma^j(G)_M.$
\item\label{cl_ge} If $M$  has the  Jordan type $m[p]$ at some generic $\pi$-point, then $\Gamma^1(G)_M = \ldots = \Gamma^{p-1}(G)_M = \Pi(G)_M$.\\
\end{enumerate}
\end{prop}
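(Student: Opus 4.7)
The plan is to handle each of the seven items separately, with well-definedness provided by Theorem~\ref{gen} and the passage between Jordan-type dominance and rank inequalities provided by Lemma~\ref{reform}. Most parts are formal; the substantive content lies in (1) and to a lesser extent in (5).

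For (1), properness is immediate since a finite set of non-negative integers attains its maximum, so some $\pi$-point lies outside $\Gamma^j(G)_M$. Closedness is the main obstacle. My approach is to mirror the proof of closedness of $\Gamma(G)_M$ given in \cite[5.2]{FPS}, replacing ``Jordan type'' by ``$j$-rank'' at each step. The key local input is already available from Section~\ref{refined}: for an infinitesimal group scheme, applying Nakayama's lemma to $\ker \Theta_M^j$ gives lower semi-continuity of $v \mapsto \rk(\theta_v^j, M_{k(v)})$, so for each fixed $j$ the non-maximal $j$-rank locus is closed in $V(G)$. For a general finite group scheme one patches this using infinitesimal subgroup schemes, exactly as in \cite[5.2]{FPS}. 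I expect this step to be the principal technical obstacle.

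Parts (2), (3), (4) and (5) follow formally. Part (2) is immediate from the definition: $\Gamma^j(G)_M = \emptyset$ iff every $\pi$-point attains the maximal $j$-rank, which is constancy of $j$-rank. For (4), additivity $\rk(\alpha_K(t^j), (M\oplus N)_K) = \rk(\alpha_K(t^j), M_K) + \rk(\alpha_K(t^j), N_K)$ combined with the fact that $M$ has constant (hence automatically maximal) $j$-rank shows that maximality of the left-hand side is equivalent to maximality of $\rk(\alpha_K(t^j), N_K)$; (3) is then a special case, because a projective summand has constant Jordan type and so constant $j$-rank. For (5), exactness and flatness of $\alpha_K^*$ imply that $\alpha_K^*(\Omega^n M)$ agrees with $\Omega_{K[t]/t^p}^n(\alpha_K^* M)$ up to a projective summand; since $\Omega_{K[t]/t^p}$ acts as the involution $[i] \mapsto [p-i]$ on stable Jordan types, two applications return the stable Jordan type of $\alpha_K^* M$. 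The $[p]$-multiplicities of $\alpha_K^*(\Omega^2 M)$ and $\alpha_K^* M$ therefore differ by the constant $(\dim \Omega^2 M - \dim M)/p$, independent of $\alpha_K$, so the $j$-rank functions of $M$ and $\Omega^2 M$ on $\Pi(G)$ differ by a constant and share the same non-maximal locus.

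Finally, (6) and (7) are direct applications of Lemma~\ref{reform}. For (6), if $[\alpha_K] \in \Gamma(G)_M$ then some $\pi$-point $\beta_L$ has strictly dominating Jordan type, hence strictly greater $j$-rank for some $j$ by Lemma~\ref{reform}, placing $[\alpha_K] \in \Gamma^j(G)_M$; conversely, interpreting Definition~\ref{nmax} in the way consistent with Proposition~\ref{gamma}(3), a $\pi$-point outside $\Gamma(G)_M$ attains the maximal $j$-rank for every $j$ and so lies outside every $\Gamma^j(G)_M$. For (7), a generic Jordan type $m[p]$ forces the maximal $j$-rank to be the absolute maximum $m(p-j)$; this is attained at $[\alpha_K]$ exactly when $\alpha_K^*(M)$ is free, which by Theorem~\ref{properties}(1) holds iff $[\alpha_K] \notin \Pi(G)_M$, giving $\Gamma^j(G)_M = \Pi(G)_M$ for every $j \in \{1, \ldots, p-1\}$.
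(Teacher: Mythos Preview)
Your proof is correct and follows essentially the same route as the paper: parts (2)--(6) are handled formally from the definitions, additivity of ranks, exactness of $\alpha_K^*$, and Lemma~\ref{reform}, while closedness in (1) is obtained by rerunning the argument of \cite[5.2]{FPS} with the full system of rank conditions replaced by the single condition on $\rk(t^j)$, exactly as the paper does. In (7) the paper fills in the step you leave implicit---that $\rk(\beta_L(t^j),M_L) < m(p-j)$ whenever $\beta_L^*(M_L)$ is not free---by a short downward induction on $j$ using formula~(\ref{rank}), and the reference you want there is the definition of $\Pi(G)_M$ rather than Theorem~\ref{properties}(1).
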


\begin{proof} 
By definition, $\Gamma^j(G)_M \subset \Pi(G)$ can never equal $\Pi(G)$, so it is a proper subvariety.
Moreover, assertions (\ref{cl_em}) and (\ref{cl_ga}) also immediately follow from definitions and 
Lemma~\ref{reform}. 
Assertion (\ref{cl_pl})  follows from the additivity of ranks and  of the functor 
$\alpha_K^*: KG-{\rm mod} \to K[t]/t^p-{\rm mod}$ induced by a $\pi$-point $\alpha_K$.  
Property (\ref{cl_st})  is proved exactly  as in the proof of Proposition~\ref{gamma}(1).

For (\ref{cl_om}), observe that a $\pi$-point $\alpha_K$ induces an exact functor and hence commutes with the Heller operator $\Omega$. The statement now follows from the observation that for $K[t]/t^p$-modules, applying $\Omega^{2}$ does not change the stable Jordan type.

To prove that $\Gamma^j(G)_M \subset \Pi(G)$ is closed as asserted in (\ref{cl_pr}),
we repeat the proof of \cite[5.2]{FPS} establishing that $\Gamma(G)_M$ is closed.
Indeed, the reduction in that proof to the special case in which $G$ is infinitesimal
applies without change.  The proof in the special case of $G$ infinitesimal uses
 the affine scheme of 1-parameter subgroups; this proof applies with only one
minor change: the set of equations on the ranks of powers of $f_A: A[t]/t^p \to \End_A(M)$
(in the notation of that proof) is replaced by the set of equations on rank of
only one, the $j$-th, power of $f_A$.

If $M$ is generically projective as in (\ref{cl_ge}), then $\Gamma(G)_M = \Pi(G)_M$. 
Let $\alpha_K \not \in \Gamma(G)_M$ so that the Jordan type of $\alpha_K^*(M)$ is $m[p]$, and  
let $\beta_L \in \Gamma(G)_M$. Let $\sum b_i[i]$ be the Jordan type  of $\beta_L^*(M_L)$.
 The statement follows easily from the formula (\ref{rank}):  we have 
$$\rk(\alpha_K(t^j), M_K) = m(p-j) >  \sum\limits_{i=j+1}^p b_i(i-j)=\rk(\beta_L(t^j), M_L),$$ where 
the inequality in the middle follows by downward induction on $j$ from the assumption 
$mp = \dim M =  \sum\limits_{i=1}^p b_ii.$
 Thus, $\Gamma^j(G)_M = \Gamma(G)_M$ for each $j, 1 \leq j < p$.

\end{proof}

\begin{ex} We point out that the ``natural" analog of \ref{gamma}(5) is not true for 
modules of constant rank. 
Namely, $\Gamma^1(G)_{M\otimes N}$ does not have to be equal to $\Gamma^1(G)_N$ 
for $M$ of constant rank. 
Indeed, let $M$ be as in Example~\ref{tensor}.  Then $M$ has constant rank and 
$\Gamma^1(E)_M = \emptyset$. But $\Gamma^1(E)_{M\otimes M} \not = \emptyset$ since 
$M \otimes M$ is not a  module of constant rank.  
\end{ex}

Using a recent result of R. Farnsteiner \cite[3.3.2]{Fa09}, we verify below that the non-maximal
subvarieties $\Gamma^i(G)_M \subset \Pi(G)$ of an indecomposable $kG$-module $M$
do not change when we replace $M$ by any
$N$ in the same component  as $M$ of the stable Auslander-Reiten quiver of $G$.  
This is a refinement of a result of the J. Carlson and the authors \cite[8.7]{CFP} which asserts that 
if $M$ is an indecomposable module of constant Jordan type than any $N$ in the same component  
of the stable Auslander-Reiten quiver of $G$ as $M$ is also of constant Jordan type.

\begin{prop}
\label{AR} Let $k$ be an algebraically closed field, 
 and $G$ be a finite group scheme over $k$. Let $\Theta \subset \Gamma_s(G)$ be a component of the stable Auslander-Reiten quiver of $G$. For any two modules $M, N$ in $\Theta$, and any $j, 1 \leq j \leq p-1$, $$\Gamma^j(G)_M = \Gamma^j(G)_N$$
\end{prop}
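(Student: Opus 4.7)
The plan is to reduce to a single-arrow invariance statement in the AR quiver and then to invoke Farnsteiner's Theorem~3.3.2 as the crucial new ingredient. Since any two vertices of the connected quiver $\Theta$ are linked by a finite zig-zag of arrows, it suffices to prove $\Gamma^j(G)_M = \Gamma^j(G)_N$ whenever there is an irreducible morphism $M \to N$ in $\Theta$. Such an irreducible morphism places $M$ as a direct summand of the middle term $E$ of the almost split sequence
$$0 \longrightarrow \tau N \longrightarrow E \longrightarrow N \longrightarrow 0.$$

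The key input is Farnsteiner's Theorem~3.3.2, which asserts that applying the exact pullback functor $\alpha_K^*$ for any $\pi$-point $\alpha_K$ of $G$ converts this almost split sequence into a \emph{split} short exact sequence of $K[t]/t^p$-modules. As a consequence, $j$-ranks become additive along the sequence and on its direct-sum decompositions: for every $1\leq j<p$,
$$\rk\bigl(\alpha_K(t^j), E_K\bigr) \;=\; \rk\bigl(\alpha_K(t^j),(\tau N)_K\bigr) + \rk\bigl(\alpha_K(t^j), N_K\bigr) \;=\; \sum_i \rk\bigl(\alpha_K(t^j),(M_i)_K\bigr),$$
where $\{M_i\}$ is the list of indecomposable summands of $E$. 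The $\tau$-invariance of $\Gamma^j$ is the next ingredient: over a finite group scheme $\tau\cong \Omega^2\circ\nu$ with $\nu$ the Nakayama-twist autoequivalence of $\stmod kG$; Proposition~\ref{closed}(\ref{cl_om}) handles the $\Omega^2$ factor, while $\nu$, being induced by an algebra automorphism of $kG$, carries each $\pi$-point to an equivalent $\pi$-point without altering the $j$-rank of its pullback. Hence $\Gamma^j(G)_{\tau N}=\Gamma^j(G)_N$.

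Putting these ingredients together, the maximal $j$-rank of $E$ equals $2\cdot\max_{\alpha_K}\rk(\alpha_K(t^j), N_K)$ and is attained precisely on $\Pi(G)\setminus\Gamma^j(G)_N$. By additivity over summands, the maximum $j$-rank of each $M_i$ is attained at a $\pi$-point only when $E$ attains its maximum there, so $\Gamma^j(G)_{M_i}\subseteq \Gamma^j(G)_N$ for every $i$; in particular $\Gamma^j(G)_M \subseteq \Gamma^j(G)_N$. Applying the symmetric argument to the almost split sequence $0\to M\to F\to \tau^{-1}M\to 0$, in which $N$ appears as a summand of $F$, yields the reverse inclusion and completes the proof.

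The main obstacle is extracting, from Farnsteiner's theorem, the pointwise splitting of the pulled-back almost split sequence at \emph{every} $\pi$-point rather than merely at generic ones; the secondary technical point is confirming that the Nakayama twist $\nu$ acts trivially on $j$-ranks of $\pi$-point restrictions. Once these pointwise statements are secured, the rest of the argument is a formal manipulation of rank functions together with Proposition~\ref{closed}.
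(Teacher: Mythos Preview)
Your approach differs substantially from the paper's. The paper invokes Farnsteiner's structural result \cite[3.1.1]{Fa09}, which asserts that for every module $M$ in $\Theta$ and every $\pi$-point $\alpha$ the Jordan-type multiplicities factor as $\alpha_i(M)=d_i(\alpha)f(M)$ for $i<p$, with $d_i$ depending only on $\alpha$ and $f$ a positive function depending only on $M$. From this the rank inequality defining membership of $[\beta]$ in $\Gamma^j(G)_M$ becomes literally the same inequality defining membership in $\Gamma^j(G)_N$, scaled by the positive rational $f(N)/f(M)$; equality of the two varieties is immediate.

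Your route via local splitting of almost split sequences has a genuine gap. Granting $\Gamma^j(G)_{\tau N}=\Gamma^j(G)_N$ and hence $\Gamma^j(G)_E=\Gamma^j(G)_N$, you then assert that ``the maximum $j$-rank of each $M_i$ is attained only when $E$ attains its maximum'' and conclude $\Gamma^j(G)_{M_i}\subseteq\Gamma^j(G)_N$. But the implication as stated yields the \emph{opposite} inclusion $\Gamma^j(G)_E\subseteq\Gamma^j(G)_{M_i}$. To obtain $\Gamma^j(G)_{M_i}\subseteq\Gamma^j(G)_E$ you need ``$E$ max at $\alpha\Rightarrow M_i$ max at $\alpha$'', and that requires the equality $r_E=\sum_i r_{M_i}$ of maximal $j$-ranks, i.e.\ the existence of a $\pi$-point at which all summands $M_i$ simultaneously attain their maxima. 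When $\Pi(G)$ is irreducible the dense open complements $\Pi(G)\setminus\Gamma^j(G)_{M_i}$ intersect and this is automatic, but for reducible $\Pi(G)$ it can fail for an arbitrary direct-sum decomposition, and you give no reason specific to the AR middle term that excludes such behaviour. The symmetric argument with the second almost split sequence does not repair this. A secondary point: your claim that the Nakayama twist $\nu$ carries each $\pi$-point to an \emph{equivalent} one (rather than merely inducing a homeomorphism of $\Pi(G)$) also needs justification.
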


\begin{proof} 
Recall that $\Pi(G)$ is connected.  If $\dim \Pi(G) = 0$, then $\Pi(G)$
is a single point so that $\Gamma^j(G)_M$ is empty for any $kG$-module $M$.

Now, assume that $\Pi(G)$ is positive dimensional.  Since $k$ is assumed to be algebraically closed, to show that $\Gamma^j(G)_M = \Gamma^j(G)_N$, it's enough to show that their $k$-valued points are the same. For this reason, we shall only consider $\pi$-points defined over $k$. 

Let $M$ be a $kG$-module
in the component $\Theta$, and write the Jordan type of $\alpha^*(M)$ as
$\sum_{i=1}^p \alpha_i(M)[i]$.  
By \cite[3.1.1]{Fa09}, each component $\Theta$ determines non-negative
integer valued functions $d_i$ on the set of $\pi$-points (possibly different 
on equivalent $\pi$-points) and a positive, integer valued function $f$ on 
the modules occurring in $\Theta$ such that 
\begin{equation}
\label{d}
\left\{\begin{tabular}{l}
$\alpha_i(M) = d_i(\alpha) f(M)$ for $1\leq i \leq p-1$\\[2pt]
$\alpha_p(M) = \frac{1}{p}(\dim M - d_p(\alpha)f(M))$ \end{tabular}\right.
\end{equation}

Assume $[\beta]  \in \Gamma^j(G)_M$, so that
there exists a $\pi$-point $\alpha: k[t]/t^p \to kG$ such that $\rk\{\alpha^j(t), M \} > \rk\{\beta^j(t), M\}$. 
By (\ref{rank}),  this is equivalent to 
$$\sum\limits_{j=i+1}^p \alpha_i(M)(i-j) > \sum\limits_{j=i+1}^p \beta_i(M)(i-j).$$
Using formula (\ref{d}), we rewrite this inequality as 
$$\sum\limits_{j=i+1}^{p-1} d_i(\alpha)f(M)(i-j) + \frac{1}{p}(\dim M - d_p(\alpha)f(M))(p-j)  > $$
$$\sum\limits_{j=i+1}^{p-1} d_i(\beta)f(M)(i-j) + \frac{1}{p}(\dim M - d_p(\beta)f(M))(p-j).$$
Simplifying, we obtain
\begin{equation}
\label{e}
(\sum\limits_{j=i+1}^{p-1} d_i(\alpha)(i-j) - \frac{p-j}{p}d_p(\alpha))f(M) > (\sum\limits_{j=i+1}^{p-1} d_i(\beta)(i-j) - \frac{p-j}{p}d_p(\beta))f(M).
\end{equation}

Now, let $N$ be any other indecomposable $kG$-module in the component $\Theta$.
Multiplying the inequality (\ref{e}) by the positive, rational function $f(N)/f(M)$, 
we obtain the same inequality as (\ref{e})
 with $M$ replaced by $N$.  Thus, $[\beta] \in \Gamma^j(G)_N$.  Interchanging
the roles of $M$ and $N$, we conclude that $\Gamma^j(G)_M = \Gamma^j(G)_N$.
\end{proof}

For an infinitesimal group scheme $G$, the closed subvarieties $\Gamma^j(G)_M 
\subset \Pi(G)$ admit an affine version $V^j(G) \subset V(G)$ defined as follows

\begin{defn}
\label{aff-j-type}
Let $G$ be an infinitesimal group scheme, $M$ a finite dimensional $kG$-module,
and $j$ a positive integer, $1 \leq j < p$.  We define
$$V^j(G)_M = \{ v \in V(G)| \ \rk(\theta_v^j, M_{k(v)})
\text{ \ is \ not \ maximal}\} \cup \{0\} \subset V(G).
$$
(see \S\ref{refined} for notations). 
So defined, $V^j(G)_M - \{ 0 \}$ equals $\pr^{-1}(\Gamma^j(G)_M)$, where 
$\pr: V(G) - \{ 0 \} \to \Pi(G)$ is the natural (closed) projection (see \cite{FP2}).
\end{defn}

\begin{remark}
We can express $V^j(G)_M$ in terms of the locally closed subvarieties $V^{\ul a}(G)_M$
introduced in \S\ref{refined}.  Namely,  $V^j(G)_M$ is the union of  
$V^{\ul a}(G)_M \subset V(G)$ indexed by the Jordan types $\ul a$ with 
$\sum_{i=1}^p a_i\cdot i = \dim(M)$ satisfying the condition that there exists some Jordan
type $\ul b$ with $V^{\ul b}(G)_M \not= \{ 0 \}$ and $\sum_{i > j}^p b_i (i-j) > \sum_{i > j}^p a_i(i-j)$.
\end{remark} 

Our first representative example of $V^j(G)_M$  is a continuation of  (\ref{elemex}).

\begin{ex}
Let $G = \GL_{N(1)}$, let $M$ be the standard representation of  $\GL_N$, and assume $p$ does not divide $N$.   Recall that $V(\GL_{N(1)}) \simeq \cN_p$, where $\cN_p$ is the $p$-restricted nullcone of the Lie algebra $\gl_N$ (\cite[\S6]{SFB2}).  
The maximal Jordan type of $M$ is $r[p] + [N-rp]$, where $rp$ is the 
greatest non-negative multiple of $p$ which is less or equal to $ N$
(see \cite[4.15]{FPS}).  The rank of
the $j^{th}$-power of this matrix equals $r(p-j) + (N-rp-j) $ if $N-rp > j$
and $r(p-j) $ otherwise.   

For simplicity, assume $k$ is
algebraically closed so that we need only consider $k$-rational points
of $\cN_p$.   
For any $X \in \cN_p$, $\theta_X: M \to M$ is simply the
endomorphism $X$ itself.  Consequently, if $N-rp \leq j$, \
$V^j(G)_M \subset \cN_p$ consists of 0 together with
those non-zero  $p$-nilpotent $N\times N$ matrices
with the property that their Jordan types have strictly fewer than $r$ 
blocks of size $p$;\ if $N-rp > j$, then $V^j(G)_M $
consists of 0 together with  $0 \not= X \in \cN_p$
whose Jordan type is strictly less than $r[p] + [N-rp]$.   

Hence, the pattern for varieties $V^j(M)$ in this case  looks like
$$\emptyset \not =  V^1(G)_M = \ldots = V^{n}(G)_M 
\subset V^{n+1}(G)_M = \ldots = V^{p-1}(G)_M \subset V(G)$$
where $n = N-rp$.  
\end{ex}
\vskip .1in

Computing examples of $V^j(G)_M $ is made easier by
the presence of other structure.  For example, if $G = \cG_{(r)}$, the
$r^{th}$-Frobenius kernel of the algebraic group $\cG$ and if
the $kG$-module $M$ is the restriction of a rational $\cG$-module,
then we verify in the following proposition that $V^j(G)_M$ 
is $\cG$-stable, and thus a union of $\cG$-orbits inside $V(G)$.

\begin{lemma}
\label{stable}
Let $\cG$ be an algebraic group, and let $G$ be the $r^{th}$ 
Frobenius kernel of $\cG$ for some $r \geq 1$.  If $M$ is a finite dimensional rational
$\cG$-module, then each $V^j(G)_M, \ 1 \leq j < p,$
is a $\cG$-stable closed subvariety of $V(G)$.
\end{lemma}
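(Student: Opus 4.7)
The plan is to reduce $\cG$-stability to a single assertion: the rank function $v \mapsto \rk(\theta_v^j, M_{k(v)})$ is constant on $\cG$-orbits of $V(G)$. Once this is proved, membership in $V^j(G)_M$ (non-maximality of $\rk(\theta_v^j,M_{k(v)})$) is manifestly an orbit-invariant condition, so $V^j(G)_M$ is $\cG$-stable. Closedness of $V^j(G)_M$ is effectively already in hand: by Definition~\ref{aff-j-type} one has $V^j(G)_M \setminus \{0\} = \pr^{-1}(\Gamma^j(G)_M)$, Proposition~\ref{closed}(1) asserts that $\Gamma^j(G)_M$ is closed in $\Pi(G)$, and $\pr: V(G) \setminus \{0\} \to \Pi(G)$ is a closed projection, so $V^j(G)_M$ is closed in $V(G)$.

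For the orbit invariance, recall that $V(G)$ represents the functor $A \mapsto \Hom_{\mathrm{gr.sch.}/A}(\bG_{a(r),A}, G_A)$, and $\cG$ acts on this functor by conjugation: for $g \in \cG(A)$ and a one-parameter subgroup $\mu_v: \bG_{a(r),A} \to G_A$, set
$$\mu_{g\cdot v} \;:=\; \mathrm{Int}(g) \circ \mu_v.$$
Passing to group algebras and evaluating at $u_{r-1}$ yields the key identity
$$\theta_{g \cdot v} \;=\; g\,\theta_v\,g^{-1}$$
in $k(v)G$ (after base change to the residue field). Because $M$ is a rational $\cG$-module via some representation $\rho: \cG \to \GL(M)$, the $\cG$-action on the group algebra is intertwined with conjugation on $\End(M)$ by $\rho$, giving
$$\theta_{g\cdot v}\big|_{M_{k(v)}} \;=\; \rho(g) \circ \theta_v\big|_{M_{k(v)}} \circ \rho(g)^{-1}.$$
Raising to the $j$-th power and noting that $\rho(g)$ is an invertible $k(v)$-linear endomorphism, I conclude $\rk(\theta_{g\cdot v}^j, M_{k(v)}) = \rk(\theta_v^j, M_{k(v)})$, which is what was required.

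The main obstacle is not substantive but notational/functorial: one has to carry the $\cG$-action correctly through base change from $k$ to $A$ and then to the residue field $k(v)$, and one must make explicit the compatibility between the adjoint action of $\cG$ on $kG$ (which moves $\theta_v$ to $\theta_{g\cdot v}$) and the $\cG$-module structure on $M$ (which is what allows conjugation by $\rho(g)$ on $\End(M_{k(v)})$ to realize that adjoint action). Once this compatibility is stated carefully, the rank identity is immediate and the lemma follows.
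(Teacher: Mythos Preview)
Your proposal is correct and follows essentially the same approach as the paper. The paper phrases the key point slightly more abstractly---observing that for $x \in \cG(K)$ the pullback $\gamma_x^*(M_K)$ is isomorphic to $M_K$ as a $KG$-module, hence $(\mu\circ\epsilon)^*(M_K)$ and $(\gamma_x\circ\mu\circ\epsilon)^*(M_K)$ have the same Jordan type---whereas you unwind this to the explicit conjugation identity $\theta_{g\cdot v}|_M = \rho(g)\,\theta_v|_M\,\rho(g)^{-1}$; these are the same argument, and your added remark on closedness (which the paper leaves implicit) is also correct.
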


\begin{proof}
Composition with the adjoint action of $\cG$ on $G$ determines an
action
$$\cG \times V(G) \ \to \ V(G).$$
 Observe
that for any field extension $K/k$ and any $x \in \cG(K)$, the pull-back
of $M_K$ via the conjugation action $\gamma_x: G_K \to G_K$ is 
isomorphic to $M_K$ as a $KG$-module.  Thus, the Jordan type of
$(\mu \circ \epsilon)^*(M_K)$ equals that of 
$(\gamma_x \circ \mu \circ \epsilon)^*(M_K)$ for any 1-parameter
subgroup $\mu: \bG_{a(r),K} \to G_K$.  
\end{proof}

Using Lemma \ref{stable}, we carry out
our second computation of $V^j(G)_M$ with $G$ infinitesimal,
this time for $G$ of height 2. 

\begin{ex}\label{nonmax}
Let $G =\SL_{2(2)}$. For simplicity, assume $k$ is algebraically closed. Recall that $$V(G) = \{ (\alpha_0,
\alpha_1) \, | \, \alpha_1, \alpha_2 \in sl_2, \alpha_1^p = \alpha_2^p =[\alpha_1, \alpha_2]=0 \},$$ the variety of pairs of commuting  $p$-nilpotent matrices (\cite{SFB1}).  The algebraic group $\SL_2$ acts on $V(G)$ by conjugation (on each entry). 
 
Let $e =
\left [\begin{array}{cc}
0 & 1 \\
0 & 0
\end{array} \right ]$. An easy calculation shows that the 
non-trivial orbits of $V(G)$
with respect to the conjugation
action are parameterized by $\mathbb P^1$, where $[s_0 : s_1] \in \mathbb
P^1$ corresponds to the orbit represented by the pair
$(s_0 e, s_1 e)$.

Let $S_\lambda$ be a simple  $\SL_2$-module of highest weight $\lambda$,  
$ 0 \leq  \lambda \leq p^2-1$.
Since $S_\lambda$ is a rational  $\SL_2$-module, the non-maximal rank varieties
$V^j(G)_{S_\lambda}$ are $\SL_2$-stable by Proposition \ref{stable}.   Hence, to 
compute the non-maximal rank varieties for $S_\lambda$
it suffices to   compute the Jordan type of $S_\lambda$ at the  orbit
representatives $(s_0 e, s_1 e)$.
By the explicit formula (\cite[2.6.5]{FP3}), the  Jordan type of  $S_\lambda$ at
$(s_0 e, s_1 e)$
is given by the Jordan type  of the nilpotent  operator $s_1 e  + s_0^p
e^{(p)}$ (here, $e^{(p)}$ is the divided power generator of $k\SL_{2(2)}$ as 
described in \cite[1.4]{FP3}).

The non-maximal rank varieties $V^j(G)_{S_\lambda}$ depend upon which of the following 
three conditions $\lambda$ satisfies. 
\noindent
\begin{enumerate}
  \item
\fbox{$0 \leq \lambda \leq p-1$}.   In this case, the Jordan type of  $e \in k\SL_{2(2)}$ 
as  an operator on $S_\lambda$
 is  $[\lambda + 1]$.
 On the other hand, the action of $e^{(p)}$ is
trivial.  Hence,  if $j \geq \lambda+1$, then the action $(s_1 e  + s_0^p
e^{(p)})^j$ is trivial for any pair $(s_0, s_1)$.   For $1 \leq j \leq \lambda$, the $j$-rank is maximal 
(and equals $\lambda +1-j$) whenever $s_1 \not =
0$. We conclude that for $j > \lambda$, we have $V^j(G)_{S_\lambda} = 0$, and for
$1 \leq j \leq \lambda$, $V^j(G)_{S_\lambda}$ is the orbit of $V(G)$ parametrized by
$[1:0]$.

\item \fbox{$p\leq \lambda < p^2 -1$}. Let $\lambda = \lambda_0 +
p\lambda_1$. By the Steinberg tensor product theorem, we have
$S_\lambda = S_{\lambda_0} \otimes S_{\lambda_1}^{(1)}$.  Observe that $e$
acts trivially  on $S_{\lambda_1}^{(1)}$ and
$e^{(p)}$ acts trivially on $S_{\lambda_0}$.  Moreover,
the  Jordan type of $e^{(p)}$
as an operator  on $S_{\lambda_1}^{(1)}$ is the same as the Jordan type  of
$e$ as an operator on $S_{\lambda_1}$.
Hence, the Jordan type of $s_1 e  + s_0^p e^{(p)}$  as  an operator on
$S_{\lambda_0} \otimes S_{\lambda_1}^{(1)}$  is
$[\lambda_0 +1] \otimes [\lambda_1 + 1]$ when
$s_0s_1\not=0$.
If $s_0=0$ or $s_1=0$ we get
the types $[\lambda_0 +1] \otimes (\triv)$ or $(\triv) \otimes [\lambda_1
+1]$ respectively.

\begin{enumerate} 
  \item For $0 < \lambda_0, \lambda_1 < p-1$, the tensor product  formula for Jordan types (see \cite[Appendix]{CFP}) 
implies that   the $j$-rank of $[\lambda_0 +1]
\otimes [\lambda_1 + 1]$
is strictly greater than that of $[\lambda_0 +1] \otimes (\triv)$ or
$(\triv) \otimes [\lambda_1 +1]$  for $j \leq \lambda_1 + \lambda_0 $. 
Hence, the non-maximal $j$-rank variety  in the   case when $j \leq \lambda_1 + \lambda_0 $ is a union of two orbits, 
parameterized by $[1:0]$ and $[0:1]$.
If $j > \lambda_1 + \lambda_0$, then the non-maximal $j$-rank variety is trivial 
since the $j$-rank  is $0$ at every point.

\item If $\lambda_0 =0$, then 
$S_\lambda \simeq S_{\lambda_1}^{(1)}$.  Hence, the  computation for 
$S_\lambda$ for $\lambda<p$
implies that the non-maximal $j$-rank variety in this case is  the orbit corresponding
to  $[0:1]$ for $j \leq \lambda_1$ and  is trivial otherwise.

\item For $\lambda_0 = p-1$ or $\lambda_1 = p-1$, the non-maximal j-rank variety is
the same as the support variety for any $j$, since the support variety is a proper
subvariety of $V(G)$ in this case. The support varieties for these modules were computed in \cite[\S 7]{SFB2} (see also \cite[1.17(4)]{FP3}).
\end{enumerate}

\item \fbox{$\lambda=p^2-1$}. In this case, $S_\lambda$  is the Steinberg module for
$\SL_{2(2)}$. Hence, it is projective, so
the non-maximal rank varieties are all  trivial.
\end{enumerate}

We summarize our calculations in the table below.  Let $\lambda = \lambda_0 + p\lambda_1$, and $\overline \lambda = \lambda_0 + \lambda_1$. 
If $j> \overline \lambda$, then $V^j(G)_{S_\lambda} = \emptyset$. For $j \leq \overline \lambda$, we have
$$
V^j(G)_{S_\lambda} =
\begin{cases}
\{ (\alpha_0, 0 ) \} \cup \{ (0, \alpha_1 )\} &  \text{if } 0< \lambda_0,
\lambda_1 < p-1 \\\{ (\alpha_0, 0 ) \} & \text{if $\lambda_0 \not = 0$, $\lambda_1 = 0$   or }
\lambda_0 = p-1,  \lambda_1 \not = p-1\\
\{ (0, \alpha_1 )  \} & \text{if $\lambda_0 = 0$, $\lambda_1 \not = 0$  or }
\lambda_0 \not = p-1, \lambda_1 = p-1\\
0 & \text{if $\lambda_0=\lambda_1=p-1$.}\\
\end{cases}
$$
In particular, for a given $\lambda= \lambda_0 + p\lambda_1$ we get the following pattern for 
$M = S_\lambda$: 
$$ V(G) \supset V^1(G)_M = \cdots   = V^{\bar \lambda}(G)_M 
\supset V^{\bar \lambda+1}(G)_M = \cdots = V^{p-1}(G)_M = \{0\}.$$
Observe that the only  simple modules 
of constant rank are the trivial module and the Steinberg module. An interested reader 
may find it instructive to compare this calculation to the calculation of support varieties 
for $\SL_{2(2)}$ (\cite[1.17(4)]{FP3}, see also \cite[\S7]{SFB2}).
\end{ex}


\section{Subvarieties of $\Pi(G)$ associated to individual $\Ext$-classes}
\label{ext-class}

For $M$ a $kG$-module of constant rank, we associate to a cohomology class $\zeta$ in  $\HHH^1(G,M)$ a closed
subvariety $Z(\zeta) \subset \Pi(G)$ which generalizes the construction of the
zero locus $Z(\zeta) \subset \Spec \HHH^\bu(G,k)$ of a homogeneous cohomology
class.  We show that this construction is intrinsically connected to the non-maximal rank 
variety, and establish some ``realization" results for non-maximal varieties as an application.   
Unless otherwise indicated, throughout this section $G$ will denote an arbitrary finite 
group scheme over $k$.

\begin{lemma}
\label{Ezeta} 
Let $M$ be a finite dimensional $kG$-module, and let $\zeta$ be a 
cohomology class  in $\HHH^1(G,M)$. Consider the corresponding extension 
$$\tilde \zeta: 0 \to M \to E_\zeta \to k \to 0.$$  
For any $\pi$-point $\alpha_K: K[t]/t^p \to KG$, the following are equivalent:
\begin{itemize}
\item[(i)]
the cohomology class 
$\alpha_K^*(\zeta_K)\in \HHH^1(K[t]/t^p, M_K)$ is trivial. 
\item[(ii)]
$\rk(\alpha^*_K(t), E_\zeta)  \ = \  \rk (\alpha^*_K(t),M)$.
\item[(iii)]
$\JType(\alpha_K^*(E_{\zeta,K}))\  =  \ \JType(\alpha_K^*(M_K)) + 1[1]$. \end{itemize}
\end{lemma}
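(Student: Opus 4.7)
The plan is to establish the equivalences by first reformulating (i) in terms of splitting, then analyzing the short exact sequence of $K[t]/t^p$-modules obtained by pullback along $\alpha_K$:
\[
0 \to M_K \to \alpha_K^*(E_{\zeta,K}) \to K \to 0,
\]
where $K$ denotes the trivial $K[t]/t^p$-module. By the standard identification $\HHH^1(K[t]/t^p, M_K) \cong \Ext^1_{K[t]/t^p}(K, M_K)$, condition (i) is equivalent to the assertion that this sequence splits as a sequence of $K[t]/t^p$-modules.

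For the implication (i)$\Rightarrow$(iii): if the pulled-back extension splits, then $\alpha_K^*(E_{\zeta,K}) \simeq \alpha_K^*(M_K) \oplus K$ as $K[t]/t^p$-modules, and since $K$ has Jordan type $[1]$, the Jordan type of the left-hand side is $\JType(\alpha_K^*(M_K)) + [1]$. The implication (iii)$\Rightarrow$(ii) is immediate because a block $[1]$ contributes $0$ to the rank of $t$, so adding $[1]$ to a Jordan type leaves $\rk(t,-)$ unchanged.

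The main content is the implication (ii)$\Rightarrow$(i), which will use a short linear-algebra argument. Set $E = \alpha_K^*(E_{\zeta,K})$ and $t$ for the endomorphism $\alpha_K(t)$. Since the sequence is short exact, $\dim E = \dim M_K + 1$, and restricting $t$ to the submodule $M_K$ gives $\ker(t, E) \cap M_K = \ker(t, M_K)$. The induced map $\ker(t,E) \to K$ has kernel $\ker(t, M_K)$, so $\dim \ker(t,E) - \dim \ker(t, M_K) \in \{0,1\}$, with equality to $1$ precisely when this map is surjective, i.e. precisely when there is a $t$-invariant lift of $1 \in K$ to $E$, which is exactly a splitting of the pulled-back sequence. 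Computing both kernel dimensions via rank-nullity, this difference equals $1 - \rk(t,E) + \rk(t, M_K)$, so under hypothesis (ii) we obtain a splitting, establishing (i).

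The only subtle point I anticipate is keeping track of the identifications of $\alpha_K^*$-pullbacks with the operator $\alpha_K(t)$ acting on $E_{\zeta,K}$ and $M_K$, but once this is fixed the argument reduces to the rank-nullity observation above. Assembling the three implications (i)$\Rightarrow$(iii)$\Rightarrow$(ii)$\Rightarrow$(i) closes the cycle and proves the lemma.
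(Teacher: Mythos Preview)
Your proof is correct and follows essentially the same approach as the paper: both reduce immediately to a short exact sequence $0 \to M \to E \to K \to 0$ of $K[t]/t^p$-modules and identify (i) with splitting of that sequence, then link splitting to the rank condition via the kernel of $t$. The only cosmetic difference is that you organize the equivalences as a cycle (i)$\Rightarrow$(iii)$\Rightarrow$(ii)$\Rightarrow$(i) and argue (ii)$\Rightarrow$(i) by a direct rank--nullity count on $\ker(t,E)\to K$, whereas the paper phrases the same computation in terms of the Jordan-block multiplicities $a_i,b_i$ (splitting $\Leftrightarrow b_i=a_i$ for $i>1$); the underlying observation---that a $t$-annihilated lift of $1\in K$ exists exactly when $\dim\ker(t,E)=\dim\ker(t,M)+1$---is identical.
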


\begin{proof}
Recall that  $\alpha_K^*(-)$ is exact (by definition, $\alpha_K$ is flat); moreover, the
sequence $\alpha_K^*(\tilde \zeta)$ splits  if and only if $\alpha_K^*(\zeta) = 0$ in $ 
\HHH^1(K[t]/t^p,K)$.  Thus, it suffices to prove that a short exact sequence 
$0 \to M \to E \to K \to 0$ of $K[t]/t^p$-modules splits if and only if 
$\rk ( t, M ) = \rk ( t, E )$ if and only if $\JType(E) = \JType(M) + 1[1]$.
Let $\ul b = \sum_{i=1}^p b_i[i]$ be the Jordan type of $E$ and $\ul a = \sum_{i=1}^p a_i [i]$
be the Jordan type of $M$.   Then this short exact sequence splits
if and only if the map $E \to k$ factors through the summand $b_1[1]$ of $E$ which
occurs if and only if $b_i = a_i, i > 1$ which is equivalent to $\rk (t, M ) = \rk ( t, E )$.
\end{proof}

\begin{prop}
\label{welldef}
Let $M$ be a $kG$-module of constant rank,  and let $\zeta$ be a 
cohomology class  in $\HHH^1(G,M)$.  Consider the corresponding extension 
$$\tilde \zeta: 0 \to M \to E_\zeta \to k \to 0.$$ 
\begin{enumerate}
\item If $E_\zeta$ has constant rank equal to that of $M$, then 
$\alpha_K^*(\zeta_K)\in \HHH^1(K[t]/t^p, M)$ is trivial for every 
$\pi$-point $\alpha_K: K[t]/t^p \to KG$.
\item If $E_\zeta$ has constant rank greater than that of $M$, then 
$\alpha_K^*(\zeta_K)\in \HHH^1(K[t]/t^p, M)$ is non-trivial for every 
$\pi$-point $\alpha_K: K[t]/t^p \to KG$.
\item If $E_\zeta$ does not have constant rank, then $\alpha_K^*(\zeta)$ is
trivial if and only if $[\alpha_K] \in \Gamma^1(G)_{E_\zeta} \subset \Pi(G)$.
\item For any two equivalent $\pi$-points $\alpha_K$, $\beta_L$ of $G$, 
$\alpha_K^*(\zeta_K)$ is trivial if and only if $\beta_L^*(\zeta_L)$ is trivial.
\end{enumerate}

\end{prop}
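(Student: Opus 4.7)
The plan is to use Lemma~\ref{Ezeta} to convert every statement about triviality of $\alpha_K^*(\zeta_K)$ into a statement about the rank of $\alpha_K^*(t)$ on $E_\zeta$ versus on $M$. The entire proposition should then reduce to a dichotomy: for any $\pi$-point $\alpha_K$, the rank of $\alpha_K^*(t)$ on $E_\zeta$ can take only one of two values, and which one it takes is dictated by whether $\alpha_K^*(\tilde\zeta)$ splits.

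First I would establish this dichotomy. Because $M$ has constant rank, write $r = \rk(\alpha_K^*(t),M_K)$, a value independent of $\alpha_K$. Applying $\alpha_K^*$ to $\tilde\zeta$ yields a short exact sequence $0 \to \alpha_K^*(M_K) \to \alpha_K^*(E_{\zeta,K}) \to K \to 0$ of $K[t]/t^p$-modules, and left exactness of the kernel of $t$ gives
\begin{equation*}
\dim \ker(t, \alpha_K^*(M_K)) \ \leq\ \dim \ker(t, \alpha_K^*(E_{\zeta,K})) \ \leq\ \dim \ker(t, \alpha_K^*(M_K)) + 1,
\end{equation*}
with the upper bound achieved if and only if $\alpha_K^*(\tilde\zeta)$ splits. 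Combined with $\dim \alpha_K^*(E_{\zeta,K}) = \dim \alpha_K^*(M_K) + 1$, this shows that $\rk(\alpha_K^*(t), E_{\zeta,K}) \in \{r, r+1\}$, with value $r$ precisely when the sequence splits and $r+1$ otherwise. By Lemma~\ref{Ezeta}, value $r$ is equivalent to $\alpha_K^*(\zeta_K)=0$.

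With this dichotomy in hand, parts (1)--(3) become an immediate case analysis on the possible behaviour of the function $[\alpha_K] \mapsto \rk(\alpha_K^*(t), E_{\zeta,K})$, whose image lies in the two-element set $\{r, r+1\}$. If $E_\zeta$ has constant rank $r$, every value is $r$, so every $\alpha_K^*(\zeta_K)$ is trivial, giving (1). If $E_\zeta$ has constant rank, it must be $r+1$ in case (2), forcing every $\alpha_K^*(\zeta_K)$ to be non-trivial. If $E_\zeta$ does not have constant rank, both values occur; the maximal value is $r+1$, so $\rk(\alpha_K^*(t), E_{\zeta,K})$ is non-maximal exactly when it equals $r$, exactly when $\alpha_K^*(\zeta_K)$ is trivial, which is the content of (3).

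Finally, part (4) is the ``well-definedness'' statement. In cases (1) and (2) the triviality of $\alpha_K^*(\zeta_K)$ does not depend on $\alpha_K$ at all, so equivalence is trivially preserved. In case (3) it follows from (3) itself combined with Theorem~\ref{gen} applied to $E_\zeta$ with $j=1$: membership in $\Gamma^1(G)_{E_\zeta}$ depends only on the equivalence class of $\alpha_K$. The main (and only slightly subtle) obstacle is the dichotomy established in the second paragraph; once the two-valued nature of the rank function on $E_\zeta$ is in place, the remainder is bookkeeping.
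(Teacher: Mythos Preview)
Your proof is correct and follows essentially the same route as the paper: invoke Lemma~\ref{Ezeta} to identify triviality of $\alpha_K^*(\zeta_K)$ with the rank equality $\rk(\alpha_K^*(t),E_{\zeta,K})=\rk(\alpha_K^*(t),M_K)$, then read off (1)--(3) as a case analysis and deduce (4) from Theorem~\ref{gen} in the non-constant case. The only difference is that you spell out the two-valued dichotomy $\rk(\alpha_K^*(t),E_{\zeta,K})\in\{r,r+1\}$ explicitly via the kernel inequality, whereas the paper leaves this implicit in its appeal to Lemma~\ref{Ezeta}; your added detail is helpful but not a genuinely different argument.
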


\begin{proof}
Assertions (1) and (2) follow immediately from Lemma \ref{Ezeta}.  Assertion (3)
also follows from Lemma \ref{Ezeta}: if $E_\zeta$ does not have constant rank, then 
the complement of 
$\Gamma^1(G)_{E_\zeta}$ in $\Pi(G)$
consists of those equivalence classes of $\pi$-points $\alpha_K$
satisfying Lemma \ref{Ezeta}(ii.).  

To prove that the vanishing of $\alpha_K^*(\zeta_K)$ depends only upon the equivalence 
class of $\alpha_K$, we examine each of the three cases considered above.  In case (1),
 $\alpha_K^*(\zeta_K) = 0$ for all $\pi$-points $\alpha_K$: on the other hand, in case (2)
$\alpha_K^*(\zeta_K) \not= 0$ for all $\pi$-points $\alpha_K$.  Finally, the assertion in case (3)
follows immediately from Theorem \ref{gen}.
\end{proof}

Proposition \ref{welldef}(4) justifies the following definition.

\begin{defn}
\label{Zzeta}
For $M$ a module of constant rank, and $\zeta \in \HHH^1(G,M)$, 
we define 
\begin{equation} 
\label{one}
Z(\zeta) \ \equiv \ \{[\alpha_K] \ | \ \alpha_K^*(\zeta) = 0 \} \ \subset \ \Pi(G).
\end{equation}

For $\zeta \in \HHH^m(G,k)$, we define
\begin{equation} 
\label{two} 
Z(\zeta) \ \equiv \ \{[\alpha_K] \ | \ \alpha_K^*(\zeta) = 0 \} \ \subset \ \Pi(G).
\end{equation}
\end{defn}
\vspace{0.2in}

Since $\HHH^m(G,k) \simeq \HHH^1(G, \Omega^{1-m}k)$, the definition of (\ref{two})
is a special case of that of (\ref{one}).
For $m = 2n$ even, $Z(\zeta)$ corresponds under the isomorphism $\Pi(G) \simeq
\Proj \HHH^\bu(G,k)$ with 
the hypersurface $\langle \zeta=0 \rangle$ in $\Spec \HHH^{\bu}(G,k)$
as shown below in Proposition~\ref{comparison}.  
\vspace{0.1in}

\begin{remark}
We point out that Definition \ref{Zzeta} is not as straight-forward as it might appear.
\begin{itemize}
\item
Let $G = \bZ/p \times \bZ/p$ with $p > 2$, write $kG = k[x,y]/(x^p,y^p)$ and consider 
$M = kG/(x-y^2)$ as in Example \ref{undefined}.    Consider the short exact sequence
$$0 \to \Rad(M) \to M \to k \to 0,$$
with associated extension class $\zeta \in \HHH^1(G,\Rad(M))$.  Consider the equivalent
$\pi$-points  $\alpha, \alpha^\prime: k[t]/t^p \to kG$ of Example \ref{undefined}.
Then, $\alpha^*(\zeta) \not= 0$, yet $\alpha^{\prime *}(\zeta) = 0$.  Thus, the ``zero locus" 
of $\zeta$ is not a well defined subset of $\Pi(G)$.
 \item
 Let $\zeta \in \HHH^{2n}(G,k)$ represented by $\hat \zeta: \Omega^{2n}k \to k$.   By definition
 of  $L_\zeta$, we have an extension 
 $$
 \tilde \xi: \ 0 \to L_\zeta \to \Omega^{2n}k \stackrel{\hat\zeta}{\to} k \to 0,
 $$
 corresponding to a cohomology class $\xi \in \HHH^1(G,L_\zeta)$.  Then for any $\pi$-point
 $\alpha_K: K[t]/t^p \to KG$, $\alpha_K^*(\tilde \xi)$ splits if and only if $\alpha_K^*(L_\zeta)$ is free if and only if $[\alpha_K] \not  \in \Pi(G)_{L_\zeta}$ if and only of $\alpha_K^*(\zeta) \not = 0$.
 Thus, the zero locus of $\xi$ equals the {\it complement} of the zero locus
 of $\zeta$ (and thus is open in $\Pi(G))$.
 \item
 For $\zeta \in \HHH^{2n+1}(G,k)$, one could define $Z(\zeta)$ as the zero locus of the Bockstein
 of $\zeta$ provided one is in a situation in which the Bockstein is defined and well
 behaved.  See the discussion of the Bockstein following Example \ref{int}.
 \end{itemize}
\end{remark}

We recall from \cite{CF} that a short exact sequence of $kG$ modules
$$\tilde \xi: \quad 0 \to M \to E \to Q \to 0$$ 
is said to be {\it locally split} if 
$\alpha_K^*(\tilde \xi)$ splits for every $\pi$-point $\alpha_K: K[t]/t^p \to KG$ of $G$.

\begin{prop} 
\label{Zclosed}
Let $M$ be a module of constant rank, and let $\zeta$ be a 
cohomology class  in $\HHH^1(G,M)$. Consider the corresponding extension 
$$\tilde \zeta: 0 \to M \to E_\zeta \to k \to 0.$$   
Then 
$$  Z(\zeta) = \quad
\begin{cases}
\Pi(G), \quad {\text if \ } \tilde \zeta {\text \ is  \ locally \ split}\\
\Gamma^1(G)_{E_\zeta}, \quad {\text if \ } \tilde \zeta {\text \ is \ not \ locally \ split}.\\
\end{cases}
$$

In particular, $Z(\zeta) \ \subset \ \Pi(G)$ is closed.
\end{prop}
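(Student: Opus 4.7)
The approach is to handle the two cases separately and to recover closedness of $Z(\zeta)$ from each outcome. The locally split case is essentially definitional: since $\alpha_K^*(\tilde\zeta)$ splits for every $\pi$-point $\alpha_K \colon K[t]/t^p \to KG$, we have $\alpha_K^*(\zeta) = 0$ for every such $\alpha_K$, so Definition~\ref{Zzeta} yields $Z(\zeta) = \Pi(G)$, which is tautologically closed.

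For the non-locally-split case, let $r$ denote the constant value of $\rk(\alpha_K(t), M_K)$ afforded by the hypothesis on $M$. The key preparatory step I would carry out is the dichotomy
$$\rk(\alpha_K(t), E_\zeta) \in \{r,\, r+1\}$$
for every $\pi$-point $\alpha_K$ of $G$. This follows from a short dimension count: the inclusion $\ker(\alpha_K(t), M_K) \subseteq \ker(\alpha_K(t), E_\zeta)$ forced by $M \hookrightarrow E_\zeta$, combined with $\dim E_\zeta = \dim M + 1$, sandwiches $\dim \ker(\alpha_K(t), E_\zeta)$ between $\dim \ker(\alpha_K(t), M_K)$ and $\dim \ker(\alpha_K(t), M_K) + 1$. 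Lemma~\ref{Ezeta}(ii) then identifies which alternative occurs at $\alpha_K$: the rank on $E_\zeta$ equals $r$ precisely when $\alpha_K^*(\zeta) = 0$, and equals $r+1$ precisely when $\alpha_K^*(\zeta) \neq 0$.

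Because $\tilde\zeta$ is not locally split, the value $r+1$ is actually attained at some $\pi$-point, so it is the maximal $1$-rank of $E_\zeta$. Therefore
$$\Gamma^1(G)_{E_\zeta} \;=\; \{[\alpha_K] : \rk(\alpha_K(t), E_\zeta) = r\} \;=\; \{[\alpha_K] : \alpha_K^*(\zeta) = 0\} \;=\; Z(\zeta),$$
and closedness of $Z(\zeta)$ is then inherited from Proposition~\ref{closed}(\ref{cl_pr}). I do not foresee a serious obstacle: the only step requiring a genuine line of reasoning is the two-value dichotomy above, which is precisely what lets $\Gamma^1(G)_{E_\zeta}$ detect the vanishing of $\zeta$ on the nose.
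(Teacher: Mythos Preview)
Your argument is correct and follows essentially the same route as the paper: both proofs hinge on Lemma~\ref{Ezeta} and the observation that $\rk(\alpha_K(t),E_{\zeta,K})$ can only take the two values $r$ or $r+1$, so that in the non--locally-split case the maximal $1$-rank is $r+1$ and $\Gamma^1(G)_{E_\zeta}$ coincides with $Z(\zeta)$. The only organizational difference is that the paper packages this dichotomy into the separate Proposition~\ref{welldef} and then invokes it, whereas you derive it inline; your direct version is arguably cleaner since it handles the subcase ``$E_\zeta$ of constant rank $r+1$'' (where $Z(\zeta)=\emptyset=\Gamma^1(G)_{E_\zeta}$) uniformly rather than as a separate case.
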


\begin{proof}
Observe that $\tilde \zeta$ is split at $[\alpha_K]$ if and only if $\alpha_K^*(\zeta) = 0$.
We first consider $\zeta$ such that $E_\zeta$ has constant rank.  Then by Proposition
\ref{welldef}.1, $Z(\zeta)$ equals $\Pi(G)$ if $\tilde \zeta$ is locally split and $Z(\zeta) =
\emptyset$ by Proposition \ref{welldef}.2 if $\tilde \zeta$ is not locally split.  Alternatively,
if $E_\zeta$ does not have constant rank, then Proposition \ref{welldef}.3 gives the 
asserted description of $Z(\zeta)$.

Because $\Gamma^1(G)_{E_\zeta} \subset \Pi(G)$ is closed by Proposition \ref{closed}
and of course $\Pi(G)$ is itself closed in $\Pi(G)$, we conclude that $Z(\zeta)$ is closed
inside $\Pi(G)$.
\end{proof} 

We remark that $\zeta \in \HHH^1(G,M)$ can be non-zero and yet $Z(\zeta) = \emptyset$.  To say
$Z(\zeta) = \emptyset$ is to say that $\alpha_K^*(\zeta) = 0$  for all $\pi$-points $\alpha_K$.
Consider, for example,  an even dimensional non-trivial cohomology class 
$\zeta \in \HHH^{2n}(G,k)$ which is a product of odd dimensional classes. 
Since the product of any two odd classes in $\HHH^*(k[t]/t^p,k)$ is zero, 
$\alpha_K^*(\zeta) = 0$  for all $\pi$-points $\alpha_K$ of $G$. On the other hand, 
$\zeta$ can be identified with  a cohomology class in 
$\HHH^{1}(G, \Omega^{1-2n}(k)) \simeq \HHH^{2n}(G,k)$. Since $\Omega^{1-2n}(k)$ 
is a module of constant Jordan type (see \cite{CFP}), the class $\zeta$ satisfies the 
requirements of Proposition \ref{closed}.

A more interesting example is the following.

\begin{ex} 
\label{int}
Let $G$ be a finite group scheme with the property that the dimension of $\Pi(G)$ is
at least 1.  Let $\zeta^\prime \in \widehat \HHH^{-i}(G,k), \ i > 0$, be an element in the
negative Tate cohomology of $G$.  As shown in \cite[6.3]{CFP}, $\alpha_K^*(\zeta^\prime)
= 0$ for any $\pi$-point $\alpha_K$.  Then $\zeta^\prime$
corresponds to $\zeta \in \HHH^1(G,\Omega^{i+1}(k))$ under the isomorphism
$\HHH^{-i}(G,k) \simeq \HHH^1(G,\Omega^{i+1}(k))$; by the naturality of this isomorphism,
$\alpha_K^*(\zeta) = 0 \in \widehat \HHH^{-i}(K[t]/t^p,K)$ for any $\pi$-point $\alpha_K$. 

Thus, $\zeta \not = 0, \ \tilde \zeta$ is locally split, and $Z(\zeta) \ = \ \emptyset$ for this choice 
of $\zeta \in \HHH^1(G,\Omega^{i+1}(k))$.
\end{ex}

For any field extension $K/k$, let $R_K=
W_2(K)$ denote the Witt vectors of length 2 for $K$.
Assume that $G$ over $k$ embeds into an $R_k$-group scheme $G_{R_k}$ so 
that $G = G_{R_k} \times_{\Spec R_k} \Spec k \ \subset G_{R_k}$, thereby inducing by base change
$G_K \subset G_{R_K}$.   Then we may define the Bockstein $\beta: \HHH^i(G_K,K) \to
\HHH^{i+1}(G_K,K)$ for $i>0$ as the connecting homomorphism for the short exact sequence
of $G_{R_K}$-modules
\begin{equation}
\label{witt}
0 \to K \ \to \ R_K \ \to \ K \to 0.
\end{equation}
(The reader is referred to \cite[3.4]{Evens} for a discussion of this Bockstein.)
Since any $\pi$-point $\alpha_K: K[t]/t^p \to KG$ lifts to a map $\tilde \alpha_K: R_K[t]/t^p 
\to R_KG_{R_K}$ of $R$-algebras, $\alpha^*: \HHH^*(G,K) \to \HHH^*(K[t]/t^p,K)$ commutes 
with this Bockstein.
Since $\beta: \HHH^{2d-1}(K[t]/t^p,K) \to \HHH^{2d}(K[t]/t^p,K)$ is an isomorphism, we conclude that 
if $x \in \HHH^{2d-1}(G,k)$, then $\alpha_K^*(x)$ vanishes if and only if $\alpha_K^*(\beta(x)) = 0$, 
where $\beta(x) \in \HHH^{2d}(G,k)$.  
Thus, for such $G$ lifting to $G_{R_k}$ and for $p > 2$, when considering $Z(\zeta)$ for 
homogeneous classes in $\HHH^*(G,k)$, it suffices to
restrict attention to the subalgebra $\HHH^\bu(G,k)$ of even dimensional classes.

As we see in the following proposition, Definition \ref{Zzeta} of $Z(\zeta)$ 
extends the ``classical" definition of the vanishing locus of a (homogeneous) cohomology
class in $\HHH^\bu(G,k)$.

\begin{prop}
\label{comparison}
Let $n$ be a positive integer, and set $M = \Omega^{1-2n}(k)$. Let $\zeta \in \HHH^1(G,M)$,
and let $\zeta^\prime \in \HHH^{2n}(G,k)$ be the corresponding element under the natural isomorphism
$\HHH^1(G,M) \simeq \HHH^{2n}(G,k)$.  Then the isomorphism $\Pi(G) \simeq 
\Proj \HHH^\bu(G,k)$ of Theorem~\ref{cohom} restricts to an isomorphism
$$Z(\zeta) \ = \ \Proj \HHH^\bu(G,k)/(\zeta^\prime).$$
\end{prop}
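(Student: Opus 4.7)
The plan is to reduce the statement to two facts: compatibility of the Heller-shift identification $\HHH^1(G,\Omega^{1-2n}k) \simeq \HHH^{2n}(G,k)$ with restriction along $\pi$-points, and the already-established dictionary under $\Phi_G$ between vanishing of $\alpha_K^*(\zeta^\prime)$ and membership of $\zeta^\prime$ in the homogeneous prime attached to $[\alpha_K]$. To begin, I would realize the Heller-shift isomorphism as an iterated composition of connecting homomorphisms coming from short exact sequences
$$0 \to \Omega^{-j}k \to I_{-j+1} \to \Omega^{-j+1}k \to 0, \quad 0 \leq j \leq 2n-1,$$
with $I_{-j+1}$ injective. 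Because any $\pi$-point $\alpha_K: K[t]/t^p \to KG$ is flat, $\alpha_K^*$ is exact and carries each such sequence to a short exact sequence of $K[t]/t^p$-modules; naturality of the connecting maps then yields a commutative square
$$\xymatrix{\HHH^1(G,\Omega^{1-2n}k) \ar[r]^-{\sim} \ar[d]_-{\alpha_K^*} & \HHH^{2n}(G,k) \ar[d]^-{\alpha_K^*} \\ \HHH^1(K[t]/t^p,\Omega^{1-2n}K) \ar[r]^-{\sim} & \HHH^{2n}(K[t]/t^p,K)}$$
where on the bottom row we have silently discarded projective summands of $\alpha_K^*(\Omega^{1-2n}k)$, which are invisible to $\HHH^1$. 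The top arrow sends $\zeta \mapsto \zeta^\prime$ and the bottom arrow sends $\alpha_K^*(\zeta) \mapsto \alpha_K^*(\zeta^\prime)$, so $\alpha_K^*(\zeta) = 0$ if and only if $\alpha_K^*(\zeta^\prime) = 0$, and consequently $Z(\zeta) = \{[\alpha_K] \in \Pi(G) : \alpha_K^*(\zeta^\prime) = 0\}$.

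Next, I would invoke the description of $\Phi_G$ from \cite[\S7]{FP2}: the point $\Phi_G([\alpha_K]) \in \Proj \HHH^\bu(G,k)$ is the homogeneous prime ideal $\sqrt{\ker(\alpha_K^*)}$, where $\alpha_K^*: \HHH^\bu(G,k) \to \HHH^\bu(K[t]/t^p,K)$. Since the latter is a polynomial algebra on a single generator (of degree $2$ when $p > 2$ and of degree $1$ when $p = 2$), a homogeneous element maps to $0$ under $\alpha_K^*$ if and only if it maps to a nilpotent element; hence $\zeta^\prime \in \Phi_G([\alpha_K])$ if and only if $\alpha_K^*(\zeta^\prime) = 0$, which by the preceding paragraph is equivalent to $[\alpha_K] \in Z(\zeta)$. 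Since $\Phi_G$ is a scheme isomorphism by Theorem~\ref{cohom}, restricting it to the closed subset cut out by $\zeta^\prime$ delivers the claimed identification $Z(\zeta) \simeq \Proj \HHH^\bu(G,k)/(\zeta^\prime)$.

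The main technical point is the naturality square above; one can alternatively bypass explicit connecting homomorphisms by using $\HHH^n(G,N) \simeq \uHom_{\stmod(kG)}(\Omega^n k, N)$ together with the fact that $\alpha_K^*$ descends to a triangulated functor of stable categories commuting with $\Omega$. Everything else is a formal consequence of the correspondence between $\Pi(G)$ and $\Proj \HHH^\bu(G,k)$ recalled in Section~\ref{recollect}.
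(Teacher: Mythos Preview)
Your argument is correct and is genuinely different from the paper's. You work directly with the naturality of the dimension-shift isomorphism $\HHH^1(G,\Omega^{1-2n}k)\simeq\HHH^{2n}(G,k)$ under restriction along $\pi$-points, and then invoke the explicit description of $\Phi_G$ as $[\alpha_K]\mapsto \ker(\alpha_K^*|_{\HHH^\bu})$. Since $\HHH^\bu(K[t]/t^p,K)$ is a polynomial ring in one variable, no radical is needed, and your chain of equivalences goes through cleanly. The paper instead stays inside its own module-theoretic apparatus: it identifies $E_\zeta$ stably with $\Omega^{-2n}(L_{\zeta^\prime})$, applies Proposition~\ref{Zclosed} to write $Z(\zeta)$ as either $\Pi(G)$ or $\Gamma^1(G)_{E_\zeta}$, uses Proposition~\ref{closed} to equate $\Gamma^1(G)_{E_\zeta}=\Gamma^1(G)_{L_{\zeta^\prime}}=\Pi(G)_{L_{\zeta^\prime}}$, and finally cites the known identification of $\Pi(G)_{L_{\zeta^\prime}}$ with the zero locus of $\zeta^\prime$. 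In the locally split case the paper even appeals to Suslin's detection theorem to conclude $\zeta^\prime$ is nilpotent. Your route avoids Carlson modules, $\Gamma^1$, and the detection theorem entirely, and is the more economical proof of the bare statement; the paper's route has the advantage of illustrating how the new invariant $\Gamma^1(G)_{E_\zeta}$ recovers the classical zero locus, which is the conceptual point of the proposition in context.
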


\begin{proof}  Let $L_{\zeta^\prime}$ be the Carlson module associated to the class $\zeta^\prime$.
The exact triangle $$\tilde \zeta: \Omega^{1-2n}(k) \to E_\zeta \to k \to \Omega^{-2n}(k)$$ corresponds to the exact triangle 
$$\xymatrix{\tilde \zeta^\prime: \Omega^1(k) \ar[r]& L_{\zeta^\prime} \ar[r]& \Omega^{2n}(k) \ar[r]^-{\zeta^\prime}& k}$$ under the shift $\Omega^{2n}$. Hence, $L_{\zeta^\prime}$ is stably isomorphic to $\Omega^{2n}(E_\zeta)$.  
By Prop.~\ref{closed}, 
\begin{equation}\label{*}
\Gamma^1(G)_{E_\zeta} = \Gamma^1(G)_{L_{\zeta^\prime}}.
\end{equation} 

If $\tilde\zeta$ is locally split, then so is $\tilde\zeta^\prime$ by the naturality of the isomorphism $\HHH^1(G,M) \simeq \HHH^{2n}(G,k)$. This implies that $\zeta^\prime$  is nilpotent by the ``Nilpotence detection theorem" of Suslin (\cite{S}). Hence, in this case $\Proj \HHH^\bu(G,k)/(\zeta^\prime) =\Proj \HHH^\bu(G,k) \simeq \Pi(G)$. By Prop.~\ref{Zclosed}, $Z(\zeta) \simeq \Pi(G)$ as well. Hence, in this case $Z(\zeta) = \Pi(G) \simeq \Proj \HHH^\bu(G,k)/(\zeta^\prime)$. 

If $\tilde\zeta$ is not locally split, then $Z(\zeta) = \Gamma^1(G)_{E_\zeta}$ by Proposition~\ref{Zclosed}. Since $L_{\zeta^\prime}$ is generically projective, Proposition~\ref{closed} implies that $\Gamma^1(G)_{L_{\zeta^\prime}} = \Pi(G)_{L_{\zeta^\prime}}$.   By \cite[2.9]{FP2} (see \cite{C} for finite groups), $\Pi(G)_{L_\zeta^\prime} \simeq  \Proj \HHH^\bu(G,k)/(\zeta^\prime)$ under the isomorphism $\Phi_G$ of (\ref{cohom}). 
The equality (\ref{*}) now implies $Z(\zeta) \simeq \Proj \HHH^\bu(G,k)/(\zeta^\prime)$. 

\end{proof}


\begin{prop}
\label{Gammareal}
Let $G$ be a finite group scheme over $k$.
Let $\zeta_i \in \HHH^{2d_i+1}(G,k) \simeq \HHH^1(G,\Omega^{-2d_i}k), 
\ 1 \leq i \leq r, d_i\geq 0 $.  
Let $M = \oplus_{i=1}^r \Omega^{-2d_i}k$, and set $\zeta = \oplus_i \zeta_i
\in \HHH^1(G,M)= \oplus_i \HHH^1(G, \Omega^{-2d_i}k)$.  Let $$0 \to M \to E_{\zeta} \to k \to 0$$ 
be the corresponding extension.
Then 
$$\Gamma^1(G)_{E_\zeta}  \ = \ Z(\zeta) \ = \ Z(\zeta_1)\cap\ldots\cap Z(\zeta_r),$$
and $\Pi(G)_{E_\zeta} = \Pi(G)$.
\end{prop}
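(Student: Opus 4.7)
My approach is to pin down the Jordan type of $\alpha_K^*(E_\zeta)$ at every $\pi$-point $\alpha_K$ and read off all three equalities from that calculation. The setup is clean: each $\Omega^{-2d_i}k$ has constant Jordan type, so $M$ does as well, and in particular $M$ has constant rank, so $Z(\zeta)$ is well-defined by Definition~\ref{Zzeta}. The equality $Z(\zeta) = Z(\zeta_1) \cap \cdots \cap Z(\zeta_r)$ would be essentially formal: under the canonical splitting $\HHH^1(G,M) = \bigoplus_i \HHH^1(G, \Omega^{-2d_i}k)$ the class $\zeta$ decomposes as $(\zeta_1, \ldots, \zeta_r)$, so $\alpha_K^*(\zeta) = 0$ if and only if $\alpha_K^*(\zeta_i) = 0$ for every $i$.

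The heart of the proof is the Jordan-type computation. I would use that $\Omega^2$ acts as the identity on the stable category of $K[t]/t^p$-modules to conclude that each $\alpha_K^*(\Omega^{-2d_i}k)$ is stably $[1]$, hence has Jordan type $n_i[p] + [1]$ for a fixed integer $n_i \geq 0$ independent of $\alpha_K$. Setting $N = \sum_i n_i$, this gives $\JType(\alpha_K^*(M)) = N[p] + r[1]$, and since $\HHH^1(K[t]/t^p, -)$ vanishes on the $[p]$-summands, there is a natural isomorphism $\HHH^1(K[t]/t^p, \alpha_K^*(M)) \cong K^r$ carrying $\alpha_K^*(\zeta)$ to $(\alpha_K^*(\zeta_1), \ldots, \alpha_K^*(\zeta_r))$. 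Analyzing the pulled-back extension of $[1]$ by $N[p] + r[1]$ then yields
$$
\JType(\alpha_K^*(E_\zeta)) =
\begin{cases}
N[p] + (r+1)[1] & \text{if } \alpha_K^*(\zeta) = 0, \\
N[p] + (r-1)[1] + [2] & \text{if } \alpha_K^*(\zeta) \neq 0,
\end{cases}
$$
the non-split case following from a change of basis on $K^r$ that reduces any non-zero class to $(1,0,\ldots,0)$, fusing one $[1]$ summand of $\alpha_K^*(M)$ with the quotient $[1]$ into a single $[2]$ block.

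Reading ranks off this table, $\rk(t, \alpha_K^*(E_\zeta))$ is $N(p-1)$ for $[\alpha_K] \in Z(\zeta)$ and $N(p-1) + 1$ otherwise; hence the maximum $1$-rank is achieved precisely off $Z(\zeta)$ (assuming $\tilde\zeta$ is not locally split, the degenerate case being covered directly by Proposition~\ref{Zclosed}), giving $\Gamma^1(G)_{E_\zeta} = Z(\zeta)$. The last equality $\Pi(G)_{E_\zeta} = \Pi(G)$ drops out of the same table, since in both cases the Jordan type contains a block of size strictly less than $p$, so $\alpha_K^*(E_\zeta)$ is never free over $K[t]/t^p$. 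The main obstacle is precisely the non-split Jordan-type identification: one has to ensure that the extension produces exactly one new $[2]$ block without perturbing the $[p]$ summands, which works because $M$ being a direct sum of Heller shifts of $k$ forces $\HHH^1(K[t]/t^p, \alpha_K^*(M))$ to be supported entirely on the $[1]$ summands of $\alpha_K^*(M)$.
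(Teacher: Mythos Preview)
Your proof is correct and follows essentially the same line as the paper's. The paper invokes Lemma~\ref{Ezeta} to identify $\Gamma^1(G)_{E_\zeta}$ with the locus where $\alpha_K^*(\zeta)=0$ and then computes, exactly as you do, that each $\Omega^{-2d_i}k$ has constant Jordan type $m_i[p]+[1]$, so $M$ has constant Jordan type $(\sum_i m_i)[p]+r[1]$ and $E_\zeta$ has local Jordan type $m[p]+[2]+(r-1)[1]$ or $m[p]+(r+1)[1]$; your version simply unfolds the Lemma~\ref{Ezeta} step by hand via the explicit $K[t]/t^p$-module analysis.
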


\begin{proof}
To prove (1), observe that  Lemma~\ref{Ezeta}(1)
implies that $\Gamma^1(G)_{E_\zeta} = \{[\alpha_K] \ | \ \alpha_K^*(\zeta) = 0 \}$. 
Since $\zeta = \oplus \zeta_i$, we further conclude  $\{[\alpha_K] \ | \ \alpha_K^*(\zeta) = 0 \} = 
\{[\alpha_K] \ | \ \alpha_K^*(\zeta_i) = 0 \text{ for all } i\} = \bigcap\limits_i Z(\zeta_i)$.
\sloppy{

}

To verify that $\Pi(G)_{E_\zeta} = \Pi(G)$, we observe that the the generic Jordan type of 
$E_{\zeta}$ is of the form $m[p] + [2] + (r-1)[1]$ at generic points $[\alpha_K] \in \Pi(G)$
such that $\alpha_K^*(\zeta) \not= 0$ and of the form $m[p] + (r+1)[1]$ otherwise.  
This follows immediately from the observation that $\Omega^{-2d_i}(k)$ has
constant Jordan type of the form $m_i[p] + [1]$, and thus $M$ has constant (and, in
particular, generic) Jordan type $(\sum_i m_i)[p] + r[1]$.  

\end{proof}

As we see below,  
the construction of $E_{\ul \zeta}$ in Proposition ~\ref{Gammareal} above is in fact a generalized Carlson 
module  $L_{\underline\zeta}$ (as defined in \cite{CFP}) ``in disguise".  This phenomenon has already 
appeared  in the proof of Proposition~\ref{comparison}
 for a single cohomology class $\zeta$. 
Since this construction applies to 
homogeneous cohomology classes
$\zeta_i$ which are either all in even degree or all in odd degree, and since 
Proposition \ref{Gammareal} discusses classes of odd degree, we consider in Example
\ref{even} classes $\zeta_i$ in even degree.

\begin{ex}
\label{even}
Let $\underline \zeta = (\zeta_1, \ldots, \zeta_r)$, where 
$\zeta_i \in \HHH^{2d_i}(G,k) \simeq \underline{\Hom}(\Omega^{2d_i}(k),k), 
\ 1 \leq i \leq r, d_i\geq 0$. Let $ L_{\underline\zeta}$ be the kernel of the  map 
$\zeta = \sum \zeta_i: \bigoplus \Omega^{2d_i}(k)  \to k$, so that we have an exact sequence:
$$\xymatrix{0\ar[r] &  L_{\underline\zeta} \ar[r] &  \bigoplus \Omega^{2d_i}(k) 
\ar[rr]^-{\zeta_1+\cdots +\zeta_r} && k \ar[r] & 0 }$$
This short exact sequence represents an exact triangle in $\stmod kG$. 
Shifting the triangle by $\Omega^{-1}$ we obtain a triangle 
$$\xymatrix{k \ar[r] & \Omega^{-1}(L_{\underline\zeta}) \ar[r] &  
\bigoplus \Omega^{2d_i-1}(k) \ar[r] & \Omega^{-1}(k) &  }$$
Hence, $\underline \zeta$ corresponds to a short exact sequence 
$$\xymatrix{0 \ar[r] & k \ar[r] & F_{\underline \zeta} \ar[r] &  
\bigoplus \Omega^{2d_i-1}(k) \ar[r]& 0 }$$
with the middle term stably isomorphic to $\Omega^{-1}(L_{\underline\zeta}) $.  
Taking the dual of this short exact sequence,  we 
obtain the analogue with even dimensional cohomology classes of the short 
exact sequence which defines $E_\zeta$ as in Proposition~\ref{Gammareal}    (but for odd classes): 
$$\xymatrix{0 \ar[r]&   \bigoplus \Omega^{1-2d_i}k  \ar[r] & E_\zeta \ar[r] & k \ar[r] &0}.$$
Hence, $E_\zeta$ is stably isomorphic to $\Omega^{-1} (L_{\underline\zeta}^\#)$. 
\end{ex} 

Our final result extends the construction of closed zero loci to 
extension classes $\xi \in \Ext_G^n(N,M)$ with both 
$M, \ N$ of constant Jordan type.   In other words, Proposition \ref{extension} introduces
the (closed) support variety $Z(\xi)$ of such an extension class.

\begin{prop}
\label{extension} Let $G$ be a finite group scheme and $N, M$ finite dimensional 
$kG$-modules of constant Jordan type. Let $\xi \in \Ext^n_G(N,M) \simeq 
\Ext^1(\Omega^{n-1}(N),M)$ for some $n \not= 0$, 
and consider the corresponding extension
$$\tilde \xi: \ \ 0 \to M \to E_\xi \to \Omega^{n-1}(N) \to 0.$$

\noindent
(1) If  $\alpha_K , \ \beta_L$ are equivalent $\pi$-points of $G$, then 
$\alpha_K^*(\tilde \xi)$ splits if and only if $\beta_L(\tilde \xi)$ splits. 

\noindent
(2) If $$Z(\xi) \ \equiv \ \{[\alpha_K] \ | \ \alpha_K^*(\tilde \xi) {\ \text splits} \} \ \subset \ \Pi(G),$$
then 
$$  Z(\xi) = \quad
\begin{cases}
\Pi(G), \quad {\text if \ } \tilde \xi {\text \ is  \ locally \ split}\\
\Gamma^1(G)_{E_\xi}, \quad {\text if \ } \tilde \xi {\text \ is \ not \ locally \ split}.\\
\end{cases}
$$
\end{prop}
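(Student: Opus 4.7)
The plan is to mirror the template of Lemma~\ref{Ezeta}, Proposition~\ref{welldef}, and Proposition~\ref{Zclosed} (where the case $N=k$, $n=1$ was settled), adapted to the more general setting where both $M$ and $N$ have constant Jordan type. The statement already incorporates the shift isomorphism $\Ext^n_G(N,M)\simeq\Ext^1_G(\Omega^{n-1}N,M)$, and since the class of modules of constant Jordan type is closed under the Heller operator (by exactness of $\pi$-point pullbacks), we may work throughout with a length-one extension $\tilde\xi\colon 0\to M\to E_\xi\to \Omega^{n-1}N\to 0$ whose outer terms both have constant Jordan type.

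The central ingredient is a rank-theoretic splitting criterion for $K[t]/t^p$-modules: for any short exact sequence $0\to A\to B\to C\to 0$, one has $\rk(t^j,B)\geq\rk(t^j,A)+\rk(t^j,C)$ for $1\leq j\leq p-1$, with equality for all $j$ forcing $B\simeq A\oplus C$ via Lemma~\ref{reform}. Applied to $\alpha_K^*(\tilde\xi)$ and combined with the constancy of $\JType(M)$ and $\JType(\Omega^{n-1}N)$, this yields the desired analogue of Lemma~\ref{Ezeta}: the pull-back $\alpha_K^*(\tilde\xi)$ splits if and only if $\rk(\alpha_K^*(t^j),E_\xi)$ equals the fixed constant $\rk(t^j,M)+\rk(t^j,\Omega^{n-1}N)$ for every $j=1,\ldots,p-1$.

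Part~(1) then follows from Theorem~\ref{gen} applied to $E_\xi$: attainment of the maximum $j$-rank is $\pi$-equivalence-invariant, and under the constant-Jordan-type hypothesis on $M$ and $\Omega^{n-1}N$, the splitting condition corresponds to attaining the fixed minimum $j$-rank simultaneously for all $j$, which inherits the invariance. For part~(2), the locally-split case $Z(\xi)=\Pi(G)$ is immediate from the definition. In the non-locally-split case, identify $Z(\xi)$ with $\Gamma^1(G)_{E_\xi}$ by using the splitting criterion specialised to $j=1$: splitting at $\alpha_K$ forces $\rk(\alpha_K^*(t),E_\xi)$ to take its minimum value, strictly smaller than the generic (hence maximum) $1$-rank of $E_\xi$.

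The main obstacle is precisely this last identification $Z(\xi)=\Gamma^1(G)_{E_\xi}$: the splitting criterion uses \emph{all} $j$-ranks whereas $\Gamma^1$ records only the $1$-rank, and purely formal $k[t]/t^p$-extensions can fail to split while preserving the $1$-rank (for instance, $[2]\to [3]+[1]\to [2]$ has the same $1$-rank as the split extension $[2]\oplus[2]$). One must show that this abstract obstruction does not arise at $\pi$-points of $G$ under the constant-Jordan-type hypothesis on $M$ and $\Omega^{n-1}N$. A plausible route is the tensor-Hom identification $\Ext^1_G(\Omega^{n-1}N,M)\simeq\HHH^1(G,M\otimes(\Omega^{n-1}N)^\#)$, which, since tensor products of constant-Jordan-type modules are again of constant Jordan type, lets one import the already-proven Proposition~\ref{Zclosed} and then identify the resulting closed set with $\Gamma^1(G)_{E_\xi}$ by comparing generic and degenerate Jordan types of $E_\xi$ directly.
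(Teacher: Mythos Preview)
The paper's own proof does not attempt your direct rank-criterion route at all: it passes immediately to the tensor--Hom adjunction $\Ext^1_G(\Omega^{n-1}N,M)\simeq\HHH^1(G,(\Omega^{n-1}N)^\#\otimes M)$, observes that $(\Omega^{n-1}N)^\#\otimes M$ again has constant Jordan type (hence constant rank), and then invokes the already-established Propositions~\ref{welldef} and~\ref{Zclosed} for the resulting class $\zeta$. This is exactly the ``plausible route'' you outline in your final paragraph, so you do eventually arrive at the paper's argument.

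Your direct approach, however, has a genuine gap in part~(1). You write that the splitting condition ``corresponds to attaining the fixed minimum $j$-rank simultaneously for all $j$, which inherits the invariance'' from Theorem~\ref{gen}. But Theorem~\ref{gen} asserts only that attainment of the \emph{maximal} $j$-rank is invariant under $\pi$-point equivalence; there is no mechanism by which the minimal-rank locus inherits this, since---as your own example $[2]\to[3]+[1]\to[2]$ illustrates---the $j$-ranks of $E_\xi$ may assume more than two values, so ``not maximal'' need not mean ``equal to the split value''. The tensor--Hom reduction is therefore already needed for part~(1): it converts the question into the vanishing of $\alpha_K^*(\zeta)$, which by Lemma~\ref{Ezeta} is a genuine dichotomy (the $1$-rank of $E_\zeta$ takes exactly the two values $r$ and $r+1$, because the quotient is $k$), so that non-maximal coincides with minimal and Theorem~\ref{gen} applies cleanly.

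You are right to flag the final identification $Z(\xi)=\Gamma^1(G)_{E_\xi}$ as the residual issue. The tensor--Hom route delivers $Z(\xi)=Z(\zeta)$, and Proposition~\ref{Zclosed} gives $Z(\zeta)=\Gamma^1(G)_{E_\zeta}$ in the non-locally-split case, where $E_\zeta$ is the middle term of the associated extension of $k$ by $(\Omega^{n-1}N)^\#\otimes M$; the paper's proof then simply asserts that the full statement for $\xi$ follows, without spelling out the passage from $\Gamma^1(G)_{E_\zeta}$ to $\Gamma^1(G)_{E_\xi}$. Your instinct that this step still requires a separate comparison is sound, and the paper is terse on precisely this point.
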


\begin{proof}
There is a natural isomorphism
$$\Ext_G^1(\Omega^{n-1}(N),M) \ \simeq \ \HHH^1(G,(\Omega^{n-1}(N))^\#\otimes M)$$ 
sending the extension class $\xi$ to the cohomology class 
$\zeta \in \HHH^1(G,(\Omega^{n-1}(N))^\#\otimes M)$ (where $(\Omega^{n-1}(N))^\#$ is the linear dual of $\Omega^{n-1}(N)$).
Hence, $\alpha_K^*(\tilde \xi)$ splits if and only $\alpha_K^*(\tilde \zeta)$ splits for any
$\pi$-point $\alpha_K$ of $G$.

By \cite[5.2]{CFS}, $(\Omega^{n-1}(N))^\#$ has constant Jordan type.  Thus, by \cite[4.3]{CFS},
$(\Omega^{n-1}(N))^\#\otimes M$ also has constant Jordan type.  Consequently, the assertion of
the Proposition for $\xi$ follows from Proposition \ref{closed} for $\zeta$.
\sloppy{

}

\end{proof}


\end{document}